\documentclass[11pt, reqno]{amsart}
\usepackage{subfiles}
\usepackage{standalone}
\usepackage{bm}
\usepackage[hidelinks]{hyperref}
\usepackage{multirow}
\usepackage{tikz}
\usepackage{caption}
\usepackage{textgreek}
\usepackage{float}
\usetikzlibrary{
 decorations.pathmorphing,%
 decorations.markings,%
 decorations.pathreplacing,%
 decorations.text,%
 calc,%
 patterns,%
 shapes.geometric,%
 arrows,
 decorations.shapes,
 positioning,
 plotmarks,
 shadings,
 math,
 intersections
}
\usetikzlibrary{arrows.meta,decorations.markings,angles,quotes}

\definecolor{jeffColor}{RGB}{102, 0, 204}
\definecolor{yaizaColor}{RGB}{0, 153, 153}
\definecolor{certainty}{RGB}{64, 228, 198}
\definecolor{hope}{RGB}{228, 194, 64}

\definecolor{roColor}{RGB}{183, 40, 46}
\definecolor{roColorBlue}{RGB}{40, 148, 200}
\definecolor{roColorGreen}{RGB}{0, 123, 67}

\definecolor{periodColor}{RGB}{255, 167, 105}
\definecolor{dark-green}{RGB}{135, 194, 130}
\tikzset{>=latex} % sets default arrow type in tikz
\tikzset{font=\small}
\tikzset{mark size=1.5pt, mark options=thin}
\tikzset{pin distance=4pt,
 every pin edge/.style={<-, thin, shorten <= -2pt}}
\usepackage{array,booktabs,tabularx}
\usepackage{graphicx}
\usepackage{amssymb}
\usepackage{arydshln}
\usepackage{enumerate}
\usepackage{color}
\usepackage{esint}
\usepackage{mathtools}
\usepackage{dsfont}
\usepackage{ esint }
\usepackage{placeins}
\usepackage[show]{ed}
\usepackage[final]{showkeys}
\usepackage{mathrsfs}
\usepackage{changepage}   % for the adjustwidth environment
\usepackage{comment}
\usepackage{enumitem}
\usepackage{soul}
\usepackage{needspace}
\usepackage[normalem]{ulem}

\usepackage{multicol}
\usepackage{tabto}
\usepackage{hyperref}
\usepackage[makeroom]{cancel} %added by Marcello for reviewing purpose

\usepackage{bbm}

\usetikzlibrary{shapes.geometric, arrows}
\tikzstyle{box} = [rectangle, 
minimum width=2.75cm, 
minimum height=1cm, 
text centered, 
text width=2.75cm, 
draw=black]
\tikzstyle{impbox} = [rectangle, double,
minimum width=2.75cm, 
minimum height=1cm, 
text centered, 
text width=2.75cm, 
draw=black]

\definecolor{uipoppy}{RGB}{221,128,71}
\definecolor{uipaleblue2}{RGB}{179,196,215}
\definecolor{uiviolet}{RGB}{86,86,99}
\definecolor{uiblack}{RGB}{0, 0, 0}
\definecolor{azul}{RGB}{0,128,255}
\definecolor{verde}{RGB}{50,180,50}
\definecolor{pale-verde}{RGB}{155,207,145}
\definecolor{uipaleblue}{RGB}{108,199,220}
\definecolor{light-gray}{RGB}{198,198,198}

\newtheorem{lemma}{Lemma}
\newtheorem{theorem}{Theorem}
\numberwithin{theorem}{section}

\newtheorem{corollary}[lemma]{Corollary}

\theoremstyle{definition}

\newtheorem{remark}[lemma]{Remark}

\usepackage{dsfont}

\newcommand{\mc}[1]{\mathcal{#1}}

\usepackage{multicol}
\usepackage{enumitem}

\def\XXint#1#2#3{{\setbox0=\hbox{$#1{#2#3}{\int}$} \vcenter{\hbox{$#2#3$}}\kern-.5\wd0}}

\DeclareMathOperator{\vol}{vol}

\DeclareMathOperator{\supp}{supp}

\newcommand{\e}{\epsilon}

\numberwithin{equation}{section}
\numberwithin{lemma}{section}

\usepackage{subcaption}

\newcommand{\tr}{\operatorname{tr}}

\newcommand{\sgn}{\operatorname{sgn}}

\newcommand{\edra}{\begin{color}{roColor}}
\newcommand{\edrb}{\end{color}}
\usepackage{marginnote}

\title[]{Lower bounds on Steklov eigenfunctions for a class of curvilinear polygons}

\author{Jeffrey Galkowski}
\address{Department of Mathematics, University College London, London, UK}
\email{j.galkowski@ucl.ac.uk}

\author{Ruoyu P. T. Wang}
\address{Department of Mathematics, Yale University, New Haven, CT, USA}
\email{ruoyu.wang@yale.edu}

\begin{document}

\maketitle

\begin{abstract}
Steklov eigenfunctions, $\{\phi_{\sigma_j}\}_{j=0}^\infty$, are the eigenfunctions of the Dirichlet to Neumann map on a bounded domain with eigenvalue $\sigma_j$. This paper considers Steklov eigenfunctions and eigenvalues on a family of curvilinear polygons, $\Omega_\alpha$ whose boundaries are smooth away from a single corner of angle $\alpha$. We (i) prove optimal lower bounds on the decay of the harmonic extension, $u_{\sigma_j}$, of $\phi_{\sigma_j}$ away from the boundary of $\Omega_\alpha$ and (ii) give optimal lower bounds on the degree to which the high energy Steklov spectrum of $\Omega_\alpha\setminus H_1$ may differ from that of $\Omega_\alpha\setminus H_0$ when $H_1$ and $H_0$ are isometric. This shows that the corresponding estimates in the recent work by the authors together with Malagutti are optimal~\cite{GaMaWa:26}.

The proof in this paper was obtained through an interaction with ChatGPT 6 Astra: While writing the work~\cite{GaMaWa:26}, the authors became curious about optimal error bounds for the difference between the Steklov spectra on a curvilinear polygon, $\mathcal{P}$, and the spectrum of \emph{any} operator which knows only local information about $\partial\mathcal{P}$. The authors suggested to ChatGPT to remove two isometric, smooth domains from a curvilinear polygon. ChatGPT suggested the ideas for a proof based on a regularized trace and the singularity in the kernel of the DtN map near a corner. The authors then suggested a simpler, conformal approach, the method for handling small angles, and the idea to obtain lower bounds on Steklov eigenfunctions. The paper is entirely written by the authors and the argument has been significantly modified, clarified, and simplified from the ChatGPT ideas.

% The proof in this paper was obtained through an interaction with ChatGPT 6 Astra: While writing the work~\cite{GaMaWa:26}, the authors became curious about optimal error bounds for the difference between the Steklov spectra on a curvilinear polygon, $\mathcal{P}$, and the spectrum of \emph{any} operator which knows only local information about $\partial\mathcal{P}$. The authors suggested to ChatGPT to remove two isometric, smooth domains from a curvilinear polygon and obtain a lower bound on the difference between the spectra of the resulting domains. ChatGPT suggested the ideas for a proof for reentrant corners based on a regularized trace and the singularity in the kernel of the Dirichlet to Neumann map near a corner using much more complicated methods than those appearing in the text. The authors then suggested a simpler, conformal approach, the method for handling small angles, and the idea to obtain lower bounds on Steklov eigenfunctions. The paper is entirely written by the authors and the argument has been significantly modified, clarified, and simplified from the ChatGPT ideas.
\end{abstract}

\section{Introduction}

Steklov eigenfunctions for $\Omega\Subset \mathbb{R}^d$ with Lipschitz boundary are non-zero  $\phi_{\sigma_j}\in L^2(\partial\Omega)$ solving
$$
-\Delta u_{\sigma_j}=0\text{ in }\Omega,\qquad \partial_{\nu}u_{\sigma_j}-\sigma_j u_{\sigma_j}=0\text{ on }\partial\Omega,\qquad u_{\sigma_j}|_{\partial\Omega}=\phi_{\sigma_j}.
$$
That is, $\phi_{\sigma_j}$ are the eigenfunctions with eigenvalue $\sigma_j$ for the Dirichlet-to-Neumann map, $\Lambda^\Omega$ satisfying $\Lambda^\Omega f:=\partial_\nu \mc{H}^\Omega f$, where $\mc{H}^{\Omega}:H^{\frac{1}{2}}(\partial\Omega)\to H^1(\Omega)$ satisfies
$$
-\Delta \mc{H}^\Omega f=0\text{ in }\Omega,\qquad  \mc{H}^\Omega f|_{\partial\Omega}=f.
$$
It is well known that there are $0=\sigma_0<\sigma_1\leq \sigma_2\dots$ such that $\{\phi_{\sigma_j}\}_{j=0}^\infty$ form an orthonormal basis for $L^2(\partial\Omega)$~\cite{ArMa:12}. 

This paper considers a family of domains in $\mathbb{R}^2\cong \mathbb{C}$ given as follows. Let $\gamma:\mathbb{R}/2\pi \mathbb{Z}\to \Gamma\subset \mathbb{C}$ be a smooth diffeomorphsim $0\notin \supp(\gamma(s)-s)$ and such that 
$$
\Gamma \subset \{ z\in\mathbb{C}\,:\, 0\leq \arg z<2\pi  \min(\tfrac{\pi}{\alpha},1)\}.
$$
Let $\mathbb{C}\setminus \Gamma=:\Omega\sqcup \Omega_{\rm out}$, where $\Omega $ is bounded, define 
\begin{gather*}
f_{\alpha}:\{z\in\mathbb{C}\,:\, 0\leq \arg z<2\pi  \min(\tfrac{\pi}{\alpha},1)\}\to \mathbb{C},\\
f_{\alpha}(z):= \begin{cases}\exp\big(\frac{\alpha}{\pi}\log z\big)&z\neq 0\\ 0&z=0\end{cases},\qquad \text{with the branch $\log$ chosen so that $0\leq \Im \log z   <2\pi$ on $\Omega$.}
\end{gather*}
and set $\Omega_{\alpha}:=f_{\alpha}(\Omega)$. We denote by
$$
\mc{E}:=\big\{\tfrac{\pi}{n}\,:\, 0<n\in\mathbb{Z}\big\},
$$
the set of \emph{exceptional angles}.

Our first theorem provides a lower bound on $u_{\sigma_j}$ in $\Omega_{\alpha}$. 
\begin{theorem}\label{t:lowerBound}
     Let $\alpha\in (0,2\pi)\setminus \mc{E}$. Then for all $x_0\in \Omega_{\alpha}$ and $r>0$ ,
    $$
    \limsup_{j\to \infty}\sigma_j^{\frac{\pi}{\alpha}}\|u_{\sigma_j}\|_{L^2(B(x_0,r)\cap \Omega_{\alpha})}>0.
    $$
\end{theorem}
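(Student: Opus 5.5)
Argue by contradiction. Suppose that for some $x_0\in\Omega_\alpha$ and $r>0$ we had $\sigma_j^{\pi/\alpha}\|u_{\sigma_j}\|_{L^2(B(x_0,r)\cap\Omega_\alpha)}\to 0$.

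\emph{Step 1: propagate smallness to a fixed interior compact set.} The functions $u_{\sigma_j}$ are harmonic, and the eigenfunctions are normalized, $\|\phi_{\sigma_j}\|_{L^2(\partial\Omega_\alpha)}=1$, which gives polynomial bounds $\|u_{\sigma_j}\|_{H^1(\Omega_\alpha)}\lesssim\sigma_j^{1/2}$. By three-balls and unique continuation estimates for harmonic functions, smallness on $B(x_0,r)$ then spreads to every fixed $K\Subset\Omega_\alpha$, with some loss in the exponent. The loss must be avoided, so the argument cannot rely on propagation alone. Instead I would use the test-function identity below, in which only the values of $u$ near a fixed interior set matter.

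\emph{Step 2: pair $u_{\sigma_j}$ against a corner-singular harmonic function.} Work in the model coordinates $w=f_\alpha^{-1}(z)$, so that near $0$ the domain $\Omega$ has a smooth boundary (a half-plane, since $\gamma(s)=s$ near $0$). Use the singular harmonic functions of the sector,
$$v(z)=\Re z^{-\pi/\alpha}\quad\text{or}\quad \Re\!\big(z^{\pi/\alpha}\big)\text{ variants}.$$
Their normal derivatives on the two edges of the corner vanish, up to smooth errors coming from the curvature of $\partial\Omega_\alpha$ away from the corner. Take a cutoff $\chi$ equal to $1$ near the corner and supported away from $B(x_0,r)$ except for a transition region contained in $B(x_0,r)$. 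More precisely, choose an annular transition region placed inside the interior ball, with $x_0$ far from the boundary.

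Apply Green's identity to $u_{\sigma_j}$ and $\chi v$ on $\Omega_\alpha\setminus B(0,\epsilon)$. The interior commutator term $[\Delta,\chi]v$ is supported where we know $u_{\sigma_j}$ is small. The boundary terms reduce to
$$\int_{\partial\Omega_\alpha}(\sigma_j\chi v-\partial_\nu(\chi v))\,\phi_{\sigma_j}.$$
The $\epsilon\to0$ limit picks out the coefficient of the leading singular term of $u$ at the corner.

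\emph{Step 3: obtain $\sigma_j^{-\pi/\alpha}$ as the nonvanishing size of a corner term.} Scaling near the corner (the ``boundary layer'' at scale $\sigma_j^{-1}$) shows that the local corner expansion of $u_{\sigma_j}$ produces a term of exact order $\sigma_j^{-\pi/\alpha}$ times the local amplitude of $\phi_{\sigma_j}$ at the corner. The first task is to show that the corner coefficient is of size $\sigma_j^{\pi/\alpha}\cdot$(amplitude). The non-exceptional hypothesis $\alpha\notin\mathcal E$ ensures that the model sector problem has no exact plane-wave (Peters-type) solutions. So the reflection coefficient at the corner is nonzero, and $\pi/\alpha$ is not an integer, hence the singular term cannot cancel against smooth terms. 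The second task is to show that, along a subsequence, the amplitude of $\phi_{\sigma_j}$ near the corner is bounded below. For this I would use the known quasimode description of high-energy Steklov eigenfunctions on curvilinear polygons (cited from~\cite{GaMaWa:26}): they are oscillatory with non-concentrating $L^2$ mass distributed along the boundary, with mass near each vertex comparable to one.

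\emph{Main obstacle.} The hardest step is Step 3, the lower bound on the corner coefficient. It requires a precise model computation on the infinite sector via a Mellin transform, showing that the coefficient of $r^{\pi/\alpha}$-type singular behavior generated by an incoming boundary wave of frequency $\sigma$ is $c_\alpha\sigma^{\pi/\alpha}$ with $c_\alpha\neq0$ exactly when $\alpha\notin\mathcal E$. I would try to compute this explicitly using the conformal map $f_\alpha$: after pulling back, the Robin condition becomes $\partial_\nu u=\sigma|f_\alpha'|u$, with weight $|w|^{\alpha/\pi-1}$. Then $c_\alpha$ can be read off from a Gamma-function ratio, whose zeros occur precisely at $\alpha=\pi/n$.
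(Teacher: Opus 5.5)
The proposal has a genuine gap: the Green identity in Step 2 never uses the hypothesis that $u_{\sigma_j}$ is small on $B(x_0,r)$, and Step 3 relies on a per-eigenfunction lower bound that is asserted but not proved.

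\emph{Step 2 does not see the interior.} Your $v=\Re z^{-\pi/\alpha}$, like its variants, is harmonic on all of $\Omega_\alpha$: pulled back by $f_\alpha$ it is $\Re w^{-1}$ on $\Omega$, and $0\in\partial\Omega$. Your cutoff is $1$ near the vertex and has its transition region inside $B(x_0,r)\Subset\Omega_\alpha$. Since $\Omega_\alpha\setminus\overline{B(x_0,r)}$ is connected, this forces $1-\chi\in C_c^\infty(B(x_0,r))$. Hence, for every harmonic $u$,
\[
\int_{\Omega_\alpha}u\,[\Delta,\chi]v\,dx=\int_{\Omega_\alpha}u\,\Delta\big((\chi-1)v\big)\,dx=\int_{\Omega_\alpha}(\chi-1)v\,\Delta u\,dx=0 .
\]
So the term you planned to control by the hypothesis vanishes identically. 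What remains relates $\int_{\partial\Omega_\alpha}(\sigma_j v-\partial_\nu v)\phi_{\sigma_j}$ to a vertex coefficient, and it holds whether or not $u_{\sigma_j}$ is small in the ball. This cannot be fixed by choosing $\chi$ differently; $v$ itself would have to be singular inside the ball. One option is the Neumann function, $-\Delta N(\cdot,x_0)=\delta_{x_0}$ with constant normal derivative, which gives $u_{\sigma_j}(x_0)=\sigma_j\langle N(\cdot,x_0),\phi_{\sigma_j}\rangle$ for $j\ge1$. Another is simply the Poisson kernel, whose trace near the vertex has leading term $c\,|s|^{\frac{\pi}{\alpha}-1}$ with $c>0$. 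Step 3 also never explains how a vertex coefficient of size $\sigma_j^{\pi/\alpha}$, a small-scale quantity, would produce interior size $\sigma_j^{-\pi/\alpha}$.

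\emph{Step 3 needs a lower bound you do not have.} Even with a correct representation, the decisive step is a bound for individual eigenfunctions. ``Mass comparable to one near the vertex'' does not give it. The interior value is governed by pairing $\phi_{\sigma_j}$ with $|s|^{\frac{\pi}{\alpha}-1}$ localized at the vertex, which is the weight $w_\alpha$ of Lemma~\ref{l:kernelForm}. That function is even in $s$, so the pairing sees only the even part of $\phi_{\sigma_j}$ near the vertex; the odd part meets only $\sgn(s)|s|^{\frac{2\pi}{\alpha}-1}$ and smaller terms. You would therefore need infinitely many eigenfunctions whose even corner component is non-negligible and does not cancel in this pairing. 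No cited quasimode description supplies this, and your Mellin/Gamma-function claims about the model sector are stated rather than derived.

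\emph{How the paper avoids this.} It never analyses individual eigenfunctions. With $H=B(x_0,r/2)$, Lemma~\ref{l:differenceDescribe} gives $\langle\mathscr{D}^\flat(\Omega_\alpha,H)\phi_{\sigma_j},\phi_{\sigma_j}\rangle\le C\|u_{\sigma_j}\|^2_{L^2(B(x_0,r))}$. Conformal invariance gives the kernel the form $w_\alpha(t)k(x_\alpha(t),x_\alpha(s))w_\alpha(s)$, with $k(0,0)>0$ by positivity of the Poisson kernel. Lemma~\ref{l:traceLower2} then bounds $\sum_j\psi_m(h(1+\sigma_j))\langle\mathscr{D}^\flat(\Omega_\alpha,H)\phi_{\sigma_j},\phi_{\sigma_j}\rangle$ below by $ch^{\frac{2\pi}{\alpha}-1}$. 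Its principal part is $k(0,0)\langle T_{h,m}w_\alpha,w_\alpha\rangle$, a statement about the single function $w_\alpha$. This is bounded below using only $\frac{\pi}{\alpha}\notin\mathbb{Z}$, which makes the relevant Fourier transform and the constant $C_J$ nonzero. If $\|u_{\sigma_j}\|_{L^2(B(x_0,r))}=o(\sigma_j^{-\frac{\pi}{\alpha}})$, then $\sigma_j\sim j$ (Lemma~\ref{l:basic}) makes the sum $o(h^{\frac{2\pi}{\alpha}-1})+O(h^m)$, a contradiction. This averaging over the spectrum is exactly what yields a $\limsup$ statement without any knowledge of individual eigenfunctions.
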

When a domain $\Omega$ has smooth boundary $u_{\sigma_j}=O(\sigma_j^{-\infty})$ in the interior of $\Omega$ ~\cite{HLu,GalkToth,GPPS} and this, perhaps surprisingly, persists for polygons all of whose angles lie in $\mc{E}$~\cite{LPPS2022}.  Theorem~\ref{t:lowerBound} is the first result demonstrating that Steklov eigenfunctions need not decay superpolynomially into the interior. In another very recent work~\cite{DeLaPa:26}, the authors show that, on domains with $C^{2}$ boundary, harmonic Steklov eigenfunctions need not decay superpolynomially. Moreover, in a recent paper~\cite{GaMaWa:26}, the authors together with Malagutti show that, at least modulo an $\e$ loss in the power, Theorem~\ref{t:lowerBound} is optimal in the sense that on a curvilinear polygon all of whose angles are less than or equal to $\alpha_{\max}$ every eigenfunction decays like $C_\e\sigma_j^{-\frac{\pi}{\alpha_{\max}}+\e}$ away from the boundary. 

\begin{figure}
    \centering
   
    \begin{minipage}{.45\textwidth}
    \centering
  \includegraphics[width=\textwidth]{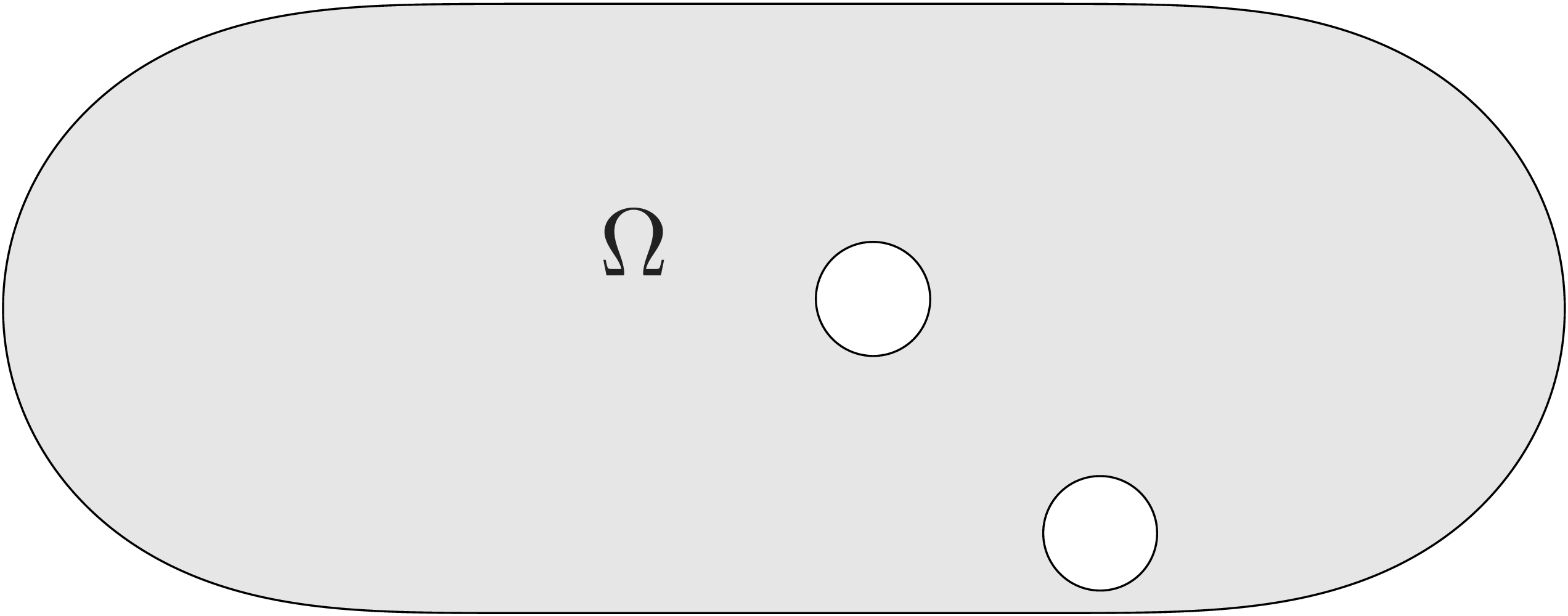}
  \end{minipage}
     \begin{minipage}{.45\textwidth}
    \centering
  \includegraphics[width=\textwidth]{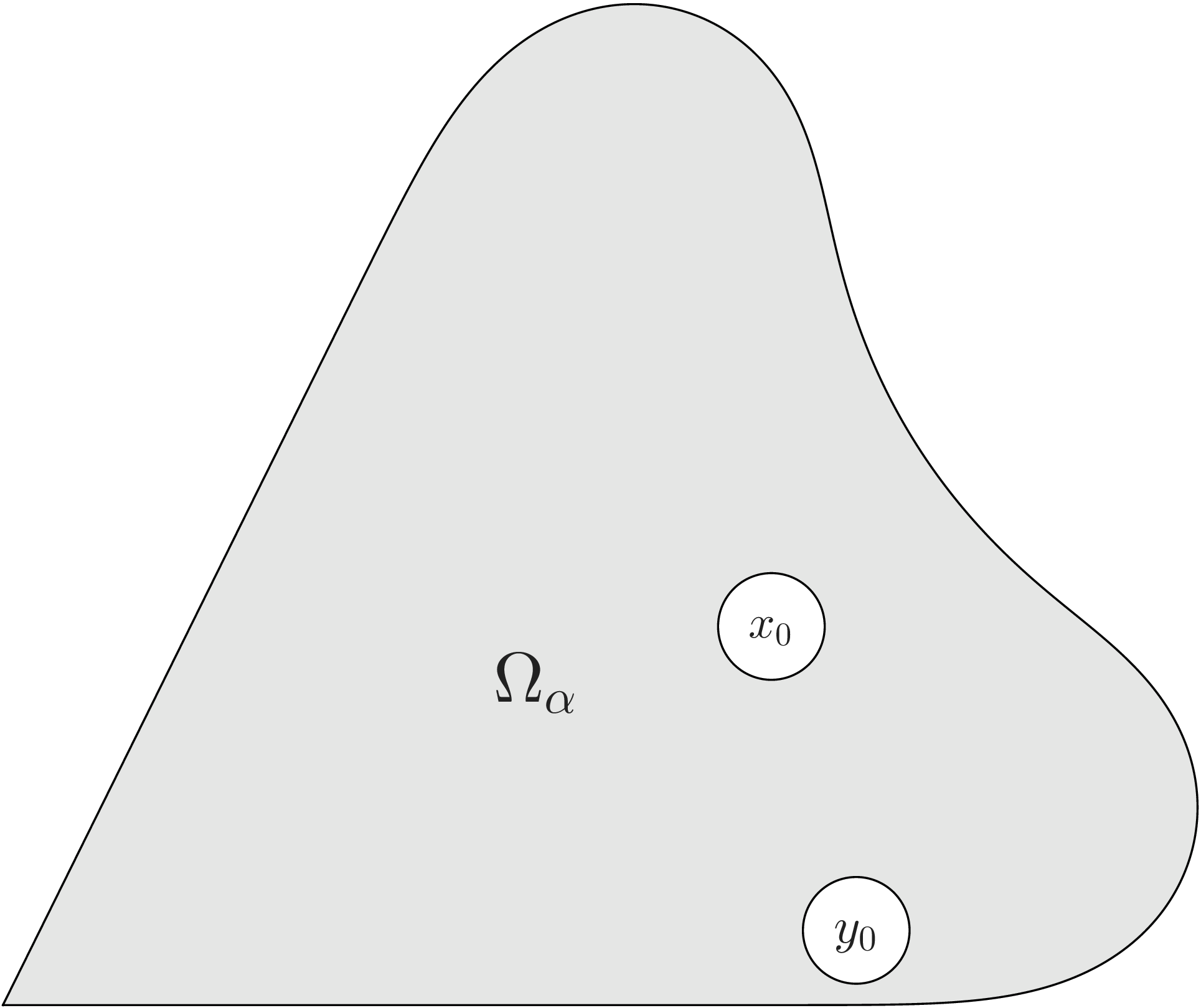}
  \end{minipage}
    \caption{The domains $\Omega$ and $\Omega_{\alpha}$ in an example where $\alpha=\frac{\pi}{2\sqrt{2}}$ with the holes used in Theorem~\ref{t:gap} removed.}
    \label{f:OmegaAlpha}
\end{figure}

On curvilinear polygons, $\mc{P}$, the article~\cite{GaMaWa:26} also proves an asymptotic formula for the Steklov spectrum of the form
\begin{equation} \label{e:dirac}
|\sigma_j-\lambda_j(\mc{D})|\leq C_\e\sigma_j^{-\frac{2\pi}{\alpha_{\max}}+\e},
\end{equation}
where $\mc{D}$ is a Dirac operator on the boundary of $\mc{P}$ that is constructed using only local information about $\partial\mc{P}$. A second goal of this article is to show that~\eqref{e:dirac} is optimal modulo an $\epsilon$ loss in the sense that there are two curvilinear polygons whose boundaries have identical local information and whose Steklov spectra differ by at least $\sigma_j^{-\frac{2\pi}{\alpha}}$.

To do this,  we consider puncturing $\Omega_{\alpha}$ by two different holes of the same radius, $H_1$ centered at $x_0$ and $H_0$ at $y_0$. We show that, despite the fact that \emph{all} local information about $\Omega_{\alpha}\setminus H_1$ is the same as that for $\Omega_{\alpha}\setminus H_0$, the high frequency spectrum is substantially perturbed by the movement of $H_1$ to $H_0$. 
\begin{theorem}\label{t:gap}
    Let $\alpha\in (0,2\pi)\setminus \mc{E}$. Then for all $r_0>0$, there is $\e_0>0$ such that for all  $x_0\in \Omega_{\alpha}$ with $d(x_0,\partial\Omega_{\alpha})>r_0$ and all $0<\e<\e_0$ there is $y_0\in \Omega_\alpha$ with $B(y_0,\e)\subset \Omega_{\alpha}$ such that 
    $$
    \limsup_{j\to \infty} \Big(\sigma_j\big(\Lambda^{\Omega_{\alpha}\setminus B(x_0,\e)}\big)+\sigma_j\big(\Lambda^{\Omega_{\alpha}\setminus B(y_0,\e)}\big)\Big)^{\frac{2\pi }{\alpha}}\Big|\sigma_j\big(\Lambda^{\Omega_{\alpha}\setminus B(x_0,\e)}\big)-\sigma_j\big(\Lambda^{\Omega_{\alpha}\setminus B(y_0,\e)}\big)\Big|>0.
    $$
\end{theorem}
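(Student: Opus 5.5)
We will deduce Theorem~\ref{t:gap} from Theorem~\ref{t:lowerBound} using first-order perturbation theory for the Dirichlet-to-Neumann map under the removal of a small hole. There are three steps: an asymptotic formula for the spectral shift caused by a small hole, a lower bound on the relevant quantity coming from Theorem~\ref{t:lowerBound}, and a choice of $y_0$ that makes the two shifts differ.

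\textbf{Step 1: effect of a small hole.} Fix $r_0$, and let $x_0$ satisfy $d(x_0,\partial\Omega_\alpha)>r_0$.
\begin{itemize}
\item Write $\Omega^x:=\Omega_\alpha\setminus B(x,\e)$. Its boundary is $\partial\Omega_\alpha\sqcup\partial B(x,\e)$, and the circle component is smooth.
\item The high-frequency Steklov spectrum of $\Omega^x$ splits into two parts: eigenvalues essentially associated with $\partial\Omega_\alpha$, and the circle eigenvalues, which are close to $\{n/\e\}$ up to errors $O(n^{-\infty})$.
\item The circle eigenvalues depend only on $\e$ up to $O(\sigma^{-\infty})$. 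They are therefore the same for $x_0$ and $y_0$ and are irrelevant to the comparison.
\item For the outer eigenvalues, we use a Schur complement (Feshbach) reduction onto $L^2(\partial\Omega_\alpha)$. The effective operator is $\Lambda^{\Omega_\alpha}+K_x(\sigma)$.
\item Here $K_x(\sigma)$ is a rank-small correction built from the interior harmonic extension evaluated near $x$. To leading order it is the quadratic form $f\mapsto c(\e)\big(|\mathcal H f(x)|^2+\e^2|\nabla\mathcal H f(x)|^2+\dots\big)$.
\item This suggests $\sigma_j(\Omega^{x})-\sigma_j(\Omega_\alpha)\approx c_\e\, Q_x(u_{\sigma_j})$, where $Q_x(u)$ is a positive quadratic expression in the Taylor data of $u$ at $x$. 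The error terms are quadratic in $\|u_{\sigma_j}\|_{C^k(B(x,2\e))}$ and come with a smaller prefactor.
\end{itemize}

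\textbf{Step 2: size of the shifts.} By interior elliptic estimates, $Q_x(u_{\sigma_j})$ is controlled by $\|u_{\sigma_j}\|^2_{L^2(B(x,r))}$.
\begin{itemize}
\item The upper bound of~\cite{GaMaWa:26} gives $Q_x(u_{\sigma_j})\lesssim\sigma_j^{-2\pi/\alpha+}$.
\item Theorem~\ref{t:lowerBound} supplies a subsequence $j_k$ with $\|u_{\sigma_{j_k}}\|_{L^2(B(x_0,r))}\gtrsim\sigma_{j_k}^{-\pi/\alpha}$.
\item Since $u_{\sigma_j}$ is harmonic, for a suitable $\e$-independent $r$ some Taylor coefficient at a point of $B(x_0,r)$ must be of size $\gtrsim\sigma^{-\pi/\alpha}$. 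This makes $Q_{x}(u_{\sigma_{j_k}})\gtrsim\sigma_{j_k}^{-2\pi/\alpha}$ for $x$ in an open set.
\end{itemize}

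\textbf{Step 3: choosing $y_0$.} We want $|Q_{x_0}(u_{\sigma_{j_k}})-Q_{y_0}(u_{\sigma_{j_k}})|\gtrsim\sigma_{j_k}^{-2\pi/\alpha}$ along the subsequence.
\begin{itemize}
\item Near the corner, the interior behaviour of $u_\sigma$ is governed by the singular corner function $r^{\pi/\alpha}\sin(\pi\theta/\alpha)$, with coefficient $a_j\sigma_j^{-\pi/\alpha}$ and $\limsup|a_j|>0$. This is a cleaner form of Theorem~\ref{t:lowerBound}, which we expect to extract from its proof.
\item Interior values then behave like $a_j\sigma_j^{-\pi/\alpha}\,\mathcal H_{\rm sing}(x)+o(\sigma_j^{-\pi/\alpha})$. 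Here $\mathcal H_{\rm sing}$ is a fixed harmonic function, independent of $j$, built from the conformal map $f_\alpha$.
\item Because $\mathcal H_{\rm sing}$ is fixed and non-constant, $|\mathcal H_{\rm sing}|^2$ cannot be constant on any open set. So we can pick $y_0$, with $B(y_0,\e)\subset\Omega_\alpha$, such that $|\mathcal H_{\rm sing}(y_0)|\ne|\mathcal H_{\rm sing}(x_0)|$.
\item If $\mathcal H_{\rm sing}(x_0)=0$, we instead pick $y_0$ where it is nonzero.
\end{itemize}

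\textbf{Concluding and the main obstacle.} Matching eigenvalue indices between $\Omega^{x_0}$ and $\Omega^{y_0}$ uses the common circle eigenvalues together with the $O(\sigma^{-2\pi/\alpha+})$ size of the shifts. This yields the claimed $\limsup$. Note that $\sigma_j(\Omega^{x_0})+\sigma_j(\Omega^{y_0})\sim2\sigma_j$, which is why the normalizing factor in the statement is harmless.

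The main obstacle is Step 1: making the perturbation formula rigorous with errors $o(\sigma^{-2\pi/\alpha})$. This must hold uniformly in $j$, including near clusters and multiplicities of the spectrum. It must also handle the interaction between the outer eigenvalues and the hole's own eigenvalues near $n/\e$. I would first try a Birman--Schwinger/Grushin problem on $\partial\Omega_\alpha\sqcup\partial B$. The off-diagonal blocks are $O(\sigma^{-\pi/\alpha})$ by interior decay, so their square has the right order. To handle possible near-degeneracies, I would choose $\e$ generic, below $\e_0$.
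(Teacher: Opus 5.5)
There is a genuine gap, and it sits where you placed the main obstacle. Step~1 is not a technicality: it is the whole content of your argument, and nothing in the proposal establishes it.

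\textbf{The gaps.} You need an individual-eigenvalue statement along a subsequence: the relevant eigenvalue of $\Lambda^{\Omega_\alpha\setminus B(x,\e)}$ must equal an eigenvalue of $\Lambda^{\Omega_\alpha}$ shifted by $\langle \mathscr{D}^{\flat}(\Omega_\alpha,B(x,\e))\phi_{\sigma_j},\phi_{\sigma_j}\rangle+o(\sigma_j^{-2\pi/\alpha})$. This is stronger than the theorem itself, and several things stand in the way.
\begin{itemize}
\item Non-degenerate first-order perturbation theory needs the spectrum of $\Lambda^{\Omega_\alpha}$ to be separated at a scale $\gg\sigma_j^{-2\pi/\alpha}$. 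Nothing in your argument provides this.
\item On clusters, the shifts are eigenvalues of compressions of $\mathscr{D}^{\flat}$, not its diagonal entries.
\item Your Schur complement onto $\partial\Omega_\alpha$ contains the resolvent of the hole--hole block, which is unbounded near the hole eigenvalues.
\item The sorted index $j$ interlaces outer and hole eigenvalues. So even correct shift asymptotics for the outer eigenvalues would not directly compare $\sigma_j$ for $x_0$ and $y_0$.
\item Taking $\e$ generic would only prove the result for generic $\e$. The theorem is stated for every $0<\e<\e_0$.
\end{itemize}
Step~3 also rests on unproved input.
\begin{itemize}
\item The expansion $u_{\sigma_j}=a_j\sigma_j^{-\pi/\alpha}\mathcal H_{\rm sing}+o(\sigma_j^{-\pi/\alpha})$, pointwise in $j$, is not what the proof of Theorem~\ref{t:lowerBound} delivers. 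That proof is a contradiction argument from a trace lower bound, and it yields only a $\limsup$ of a norm.
\item You cannot get this expansion by compactness from \cite{GaMaWa:26}, because of the small power loss in the bound there.
\item At fixed $\e$, the first-order shift is by Lemma~\ref{l:differenceDescribe} the capacity $\min\{\int|\nabla v|^2: v\in H^1_0(\Omega_\alpha),\ v=-\mathcal H_{\rm sing}\text{ on }B(y,\e)\}$. It is not $c(\e)|\mathcal H_{\rm sing}(y)|^2$ plus smaller terms. So $|\mathcal H_{\rm sing}(y_0)|\neq|\mathcal H_{\rm sing}(x_0)|$ does not by itself make the shifts differ.
\end{itemize}

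\textbf{How the paper proceeds.} Your heuristic does match the paper's mechanism. By conformal invariance, $K_{\Omega\Omega}(0,0)$ in Lemma~\ref{l:kernelForm2} is, up to a constant, the difference of the capacities of $H_1$ and $H_0$ relative to the corner profile. But the paper never looks at individual eigenvalues.
\begin{itemize}
\item With $\Lambda_\theta=1+\theta\Lambda^{\Omega_\alpha\setminus H_1}+(1-\theta)\Lambda^{\Omega_\alpha\setminus H_0}$ and $\Phi_m'=\psi_m$, it writes $\tr(\Phi_m(h\Lambda_1)-\Phi_m(h\Lambda_0))$ as $\int_0^1\tr(\psi_m(h\Lambda_\theta)h\mathscr{D}(H_1,H_0))\,d\theta$. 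This is made rigorous by compressing with spectral projectors $\Pi_M$ and approximating $\Phi_m$ by polynomials.
\item Lemma~\ref{l:traceLower} bounds the integrand below by $ch^{2\pi/\alpha-1}$, uniformly in $\theta$. It uses only that the coefficient $K_{\Omega\Omega}(0,0)$ of the principal part $w_\alpha\otimes w_\alpha$ is positive.
\item That sign condition is what the choice of $y_0$ is for. Put $y_0$ away from the corner with $\e<d(y_0,\partial\Omega_\alpha)<2\e$. Then Lemma~\ref{l:closeToBoundary} makes the corner value of the kernel for $H_0$ at most $C\e$, while Lemma~\ref{l:kernelLower} with $p>4$ makes it at least $c\e^{4/p}$ for $H_1$. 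This is where $\e_0$ comes from.
\item The min-max inequality $\sum_{j\le M}\Phi_m(h\sigma_j(\Lambda_0))\le\tr\Phi_m(\Pi_M h\Lambda_0\Pi_M)$ turns the trace bound into $\liminf_M\sum_{j\le M}[\Phi_m(h\sigma_j(\Lambda_1))-\Phi_m(h\sigma_j(\Lambda_0))]\ge ch^{2\pi/\alpha}$.
\item This contradicts $|\sigma_j(\Lambda_1)-\sigma_j(\Lambda_0)|=o(j^{-2\pi/\alpha})$, since $\psi_m(x)\le\min(x^m,1)$ and $m>\frac{2\pi}{\alpha}-1$.
\end{itemize}
The regularized trace is a symmetric function of the whole spectrum, so clusters, multiplicities, the hole eigenvalues and index matching never enter.

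If you want to pursue your route, the structure to exploit is that the high-frequency part of $\mathscr{D}^{\flat}$ is, to leading order, the rank-one operator $k(0,0)\,w_\alpha\otimes w_\alpha$ of Lemma~\ref{l:kernelForm}. You would still have to carry out a cluster-robust perturbation argument.
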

\begin{remark}
    It is easy to see that for any curvilinear polygon,  $cj\leq\sigma_j\leq Cj$  (see e.g. Lemma~\ref{l:basic}) and hence that one may replace 
    $$
    \sigma_j\big(\Lambda^{\Omega_{\alpha}\setminus B(x_0,\e)}\big)+\sigma_j\big(\Lambda^{\Omega_{\alpha}\setminus B(y_0,\e)}\big)
    $$
    in Theorem~\ref{t:gap} and $\sigma_j$ in Theorem~\ref{t:lowerBound} by $j$ without changing the statement. 
\end{remark}
\begin{remark}
    It is not hard to see from the proof of Theorem~\ref{t:gap} that the shape of the hole cut in $\Omega_{\alpha}$ is unimportant. What is used in the proof is, in fact, that the hole has diameter $\sim \e$ and there is $N>0$ such that the hole  has volume larger than $\e^N$. 
\end{remark}
\begin{figure}
    \centering
    \includegraphics[width=0.5\linewidth]{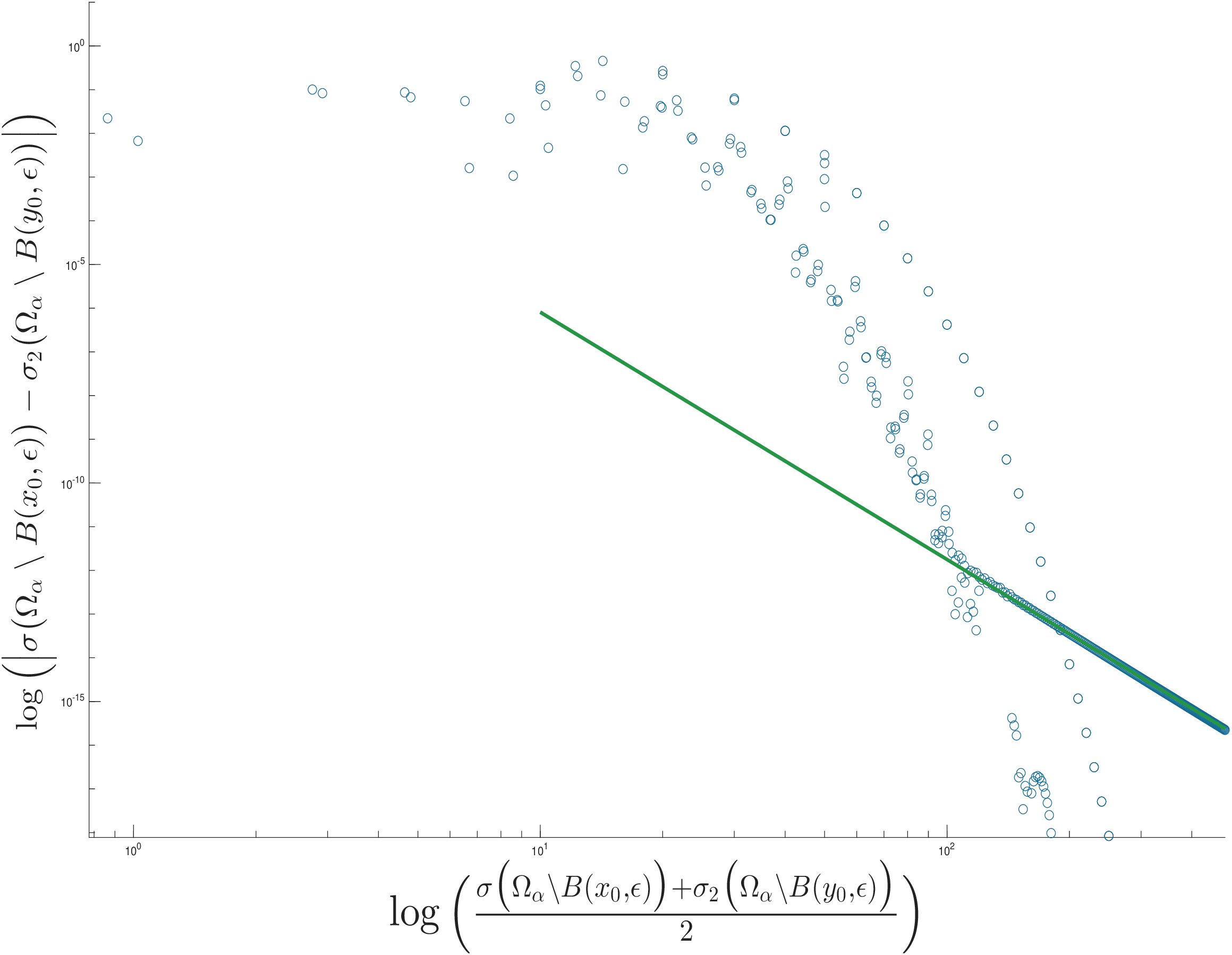}
    \caption{Log-log plot of the numerically computed differences $|\sigma_j\big(\Lambda^{\Omega_{\alpha}\setminus B(x_0,\e)}\big)-\sigma_j\big(\Lambda^{\Omega_{\alpha}\setminus B(y_0,\e)}\big)|$, with the domain $\Omega_{\alpha}$ pictured in Figure~\ref{f:OmegaAlpha}, $y_0=0.7+0.25i$ and $x_0=e^{i\pi/(3\sqrt{2})}y_0$ and $\epsilon = 0.1$. The solid green line is a line with slope $-4\sqrt{2}=-2\pi/\alpha$. This line has the same slope as the lower bound from Theorem~\ref{t:gap} and, up to an $\epsilon$ loss, the upper bound from~\cite[Theorem 1.1]{GaMaWa:26} }
    \label{fig:placeholder}
\end{figure}

{\noindent\bf{Outline of the proof} }\ 

The proofs of Theorem~\ref{t:lowerBound} and~\ref{t:gap} both rely on a trace lower bound together with a variational principle that identifies the quadratic form associated with the difference between the DtN map on a domain $\Omega$ with and without a hole, $H\Subset \Omega$. The general philosophy that lower bounds on a relative trace can be used to understand spectra has many precedents (see e.g.~\cite{davies2013inverse,smith2016heat,christiansen2017sharp, molchanov2009general}).

In this paper, we consider the difference
$$
\mathscr{D}^{\flat}(\Omega,H):= 1_{\partial\Omega}\Lambda^{\Omega\setminus H}1_{\partial\Omega}-\Lambda^{\Omega},
$$
and show in Lemma~\ref{l:differenceDescribe} that, given $f\in H^{\frac{1}{2}}(\partial\Omega)$, 
$
\langle \mathscr{D}^{\flat}(\Omega,H)f,f\rangle 
$
is the $\mc{H}^{\Omega}f$ capacity of $H$ in $\Omega$ (see e.g.~\cite{abatangelo2019spectral} for another use of this quantity in spectral estimates).

From this, we obtain two important consequences. First, we can control
\begin{equation}
\label{e:kernelDescribe}
\langle \mathscr{D}^{\flat}(\Omega,f_{\alpha}^{-1}(H))\delta_x,\delta_x\rangle 
\end{equation}
and second, if $f=\phi_{\sigma_j}$ is a Steklov eigenfunction on $\partial \Omega_{\alpha}$, then
\begin{equation}
\label{e:differenceLower}
\langle \mathscr{D}^{\flat}(\Omega_{\alpha},H)\phi_{\sigma_j},\phi_{\sigma_j}\rangle \leq C\|u_{\sigma_j}\|_{L^2(\tilde{H})}^2,
\end{equation}
where $H\Subset\tilde{H}$.

With Theorem~\ref{t:lowerBound} in mind, the inequality~\eqref{e:differenceLower} makes finding a lower bound on a certain trace of $\mathscr{D}^{\flat}(\Omega_\alpha,H)$ natural. Specifically, to obtain a lower bound on $\|u_{\sigma_j}\|_{L^2(\tilde{H})}^2$, we would ideally like to find a lower bound for
$$
\sum_j 1_{[1,\infty)}(h\sigma_j)\langle \mathscr{D}^{\flat}(\Omega_{\alpha},H)\phi_{\sigma_j},\phi_{\sigma_j}\rangle, \qquad 0<h<1.
$$
However, as is usually the case, it is difficult to control sharp spectral cutoffs and we instead use a smooth approximation to $1_{[1,\infty)}$. 

Rather than convolving $1_{[1,\infty)}$ with a smooth function, it turns out to be simpler to consider for some $m>0$ the quantity 
\begin{align*}
\sum_j \psi((h\sigma_j)^m)\langle \mathscr{D}^{\flat}(\Omega_{\alpha},H)\phi_{\sigma_j},\phi_{\sigma_j}\rangle&=\sum_j \Big\langle \psi((h\Lambda^{\Omega_{\alpha}})^m)\ \mathscr{D}^{\flat}(\Omega_{\alpha},H)\phi_{\sigma_j},\phi_{\sigma_j}\Big\rangle\\
&=\tr \Big(\psi((h\Lambda^{\Omega_{\alpha}})^m)\mathscr{D}^{\flat}(\Omega_{\alpha},H)\Big),
\end{align*}
with $\psi(x):=\frac{x}{1+x}$.
This regularization is particularly convenient for proving a lower bound on the trace because it allows us to work directly with powers of $\Lambda^{\Omega_{\alpha}}$ instead of using the full functional calculus. (This type of regularization is also known as a \emph{scaled Yosida approximation of $(h\Lambda^{\Omega_{\alpha}})^m$}~\cite[Definition before Proposition 7.2]{brezis2011functional}.)

To finish the proof, we show in Lemmas~\ref{l:traceLower} and~\ref{l:traceLower2} that for certain $m$
\begin{equation}
\label{e:traceLowerTemp}
ch^{\frac{2\pi}{\alpha}-1}\leq \tr \Big(\psi((h\Lambda^{\Omega_{\alpha}})^m)\mathscr{D}^{\flat}(\Omega_{\alpha},H)\Big).
\end{equation}
This relies on~\eqref{e:kernelDescribe} which, together with the conformal invariance of harmonic functions, allows us to show that the most singular part of the kernel of  $\mathscr{D}^{\flat}(\Omega_{\alpha},H)$ has the form $c|s|^{\frac{\pi}{\alpha}-1}|t|^{\frac{\pi}{\alpha}-1}$. By finding approximations to $\big(\Lambda^{\Omega_\alpha}\big)^m|s|^{\frac{\pi}{\alpha}-1}$ in Corollary~\ref{c:exact}, and understanding the domain of $(\Lambda^{\Omega_{\alpha}})^m$ sufficiently well, we are then able to prove~\eqref{e:traceLowerTemp} which, together with an elementary contradiction argument, proves Theorem~\ref{t:lowerBound}.

The proof of Theorem~\ref{t:gap} relies on a similar argument except that we replace $\psi((h\Lambda^{\Omega_{\alpha})^m})$ by $ \psi((h\Lambda_\theta)^m)$, with 
$$
\Lambda_{\theta}= \theta\Lambda^{\Omega_{\alpha}\setminus H_1}+(1-\theta )\Lambda^{\Omega_{\alpha}\setminus H_0},
$$
and integrate the lower bound in $\theta$ to obtain
\begin{gather*}
ch^{\frac{2\pi}{\alpha}}\leq \tr\Big(\Phi_m\big( h\Lambda^{\Omega_{\alpha}\setminus H_1}\big)-\Phi_m\big(h\Lambda^{\Omega_{\alpha}\setminus H_0}\big)\Big)=\sum_j\Phi_m\big(h\sigma_j(\Lambda^{\Omega_{\alpha}\setminus H_1})\big)-\Phi_m\big(h\sigma_j(\Lambda^{\Omega_{\alpha}\setminus H_0})\big),\\ \Phi_m(x):=\int_0^x\psi(s^m)ds.
\end{gather*}
To prove Theorem~\ref{t:gap} we then again proceed by contradiction, using that fact that $\Phi_m(x)\sim x$ for $x$ large and $|\Phi_m(x+s)-\Phi_m(x)|\leq Cx^m|s|$ for  $0<|s|<x$ small.

\medskip
\noindent {\textsc{Acknowledgements:}} JG was supported by ERC Synergy Grant ``PSINumScat"  (101167139), EPSRC grants EP/V001760/1 and EP/V051636/1, and Leverhulme Research Project Grant RPG-2023-325. The proof in this text resulted from a discussion with ChatGPT 6 Astra as described in the abstract. In addition, ChatGPT was used to generate the numerical codes which produce the figures and for proofreading.   Thanks also to Leonid Parnovski for helpful discussions.
\medskip

\section{Comparison of Dirichlet to Neumann Maps on domains with and without holes}
\label{s:comparison}
In this section, we let $\Omega\subset \mathbb{R}^d$ be open, connected with Lipschitz boundary and $H\Subset \Omega$ open with Lipschitz boundary. We then compare the DtN map on $\Omega$ with the restriction of that on $\Omega\setminus H$ restricted to $\partial\Omega$. More precisely, we study the operator
$$
\mathscr{D}^{\flat}(\Omega,H):=1_{\partial\Omega}\Lambda^{\Omega\setminus H}1_{\partial\Omega}-\Lambda^{\Omega},
$$
where $1_{\partial\Omega}$ indicates restriction to $\partial\Omega$.

\subsection{Variational formulae for $\mathscr{D}^{\flat}(\Omega,H)$}
\begin{lemma}
\label{l:differenceDescribe}
For all $f\in H^{1/2}(\partial\Omega)$, 
$$
\big\langle \mathscr{D}^{\flat}(\Omega,H)f,f\big\rangle=\min\Big\{\int_\Omega |\nabla v|^2dx\,:\, v\in H_0^1(\Omega),\, v=-\mathcal{H}^{\Omega}f\text{ on }H\Big\},
$$
\end{lemma}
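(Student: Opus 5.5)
Write $u:=\mathcal{H}^{\Omega}f$ and let $w\in H^1(\Omega\setminus \overline H)$ be the harmonic extension of $f$ to $\Omega\setminus H$, equal to $f$ on $\partial\Omega$ and $0$ on $\partial H$; extend $w$ by $0$ to $H$, so $w\in H^1(\Omega)$. The plan is to express both quadratic forms as Dirichlet energies and then identify their difference as the energy of $v:=w-u$.

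The starting point is the standard identities
\[
\langle \Lambda^{\Omega}f,f\rangle=\int_\Omega|\nabla u|^2,\qquad \langle 1_{\partial\Omega}\Lambda^{\Omega\setminus H}1_{\partial\Omega}f,f\rangle=\int_{\Omega\setminus H}|\nabla w|^2=\int_\Omega|\nabla w|^2 .
\]
These follow from Green's formula on Lipschitz domains, interpreted weakly. In the second one, $1_{\partial\Omega}f$ means $f$ on $\partial\Omega$ and $0$ on $\partial H$, so the boundary term on $\partial H$ vanishes.

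Next observe that $v=w-u\in H^1_0(\Omega)$ and $v=-u$ on $H$, so $v$ is admissible. I will show it is the minimizer and compute its energy. The admissible class is the affine space $v+\{\varphi\in H^1_0(\Omega):\varphi=0 \text{ on } H\}$. Since $H$ has Lipschitz boundary, the functions in $H^1_0(\Omega)$ vanishing on $H$ are exactly $H^1_0(\Omega\setminus\overline H)$; this identification is the one technical point to check. The minimizer is therefore characterized by
\[
\int_\Omega\nabla v\cdot\nabla\varphi=0\quad\text{for all }\varphi\in H^1_0(\Omega\setminus\overline H),
\]
that is, by $v$ being harmonic in $\Omega\setminus\overline H$. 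This holds because both $w$ and $u$ are harmonic there, and strict convexity of the Dirichlet energy gives uniqueness of the minimizer.

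Finally compute the energy:
\[
\int_\Omega|\nabla v|^2=\int_\Omega|\nabla w|^2-2\int_\Omega\nabla w\cdot\nabla u+\int_\Omega|\nabla u|^2 .
\]
Since $u$ is harmonic in $\Omega$ and $w-u\in H^1_0(\Omega)$, we have $\int_\Omega\nabla u\cdot\nabla(w-u)=0$, so $\int_\Omega\nabla w\cdot\nabla u=\int_\Omega|\nabla u|^2$. Hence
\[
\int_\Omega|\nabla v|^2=\int_\Omega|\nabla w|^2-\int_\Omega|\nabla u|^2=\big\langle \mathscr{D}^{\flat}(\Omega,H)f,f\big\rangle,
\]
which is the claimed formula.

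The main obstacle is only technical: justifying the weak Green identities for the DtN forms on Lipschitz domains, and the characterization of $H^1_0$ functions vanishing on $H$. Both are standard (via trace theory and density of smooth functions) and I would cite them rather than reprove them.
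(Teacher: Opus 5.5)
Your proof is correct and follows essentially the same route as the paper. The paper also sets $w_f=1_{\Omega\setminus H}\mathcal{H}^{\Omega\setminus H}1_{\partial\Omega}f-\mathcal{H}^{\Omega}f$, identifies the quadratic form with $\int_\Omega|\nabla w_f|^2$, and obtains the minimization from the harmonicity of $w_f$ in $\Omega\setminus H$; you additionally spell out the cross-term cancellation $\int_\Omega\nabla u\cdot\nabla(w-u)=0$ and the identification of admissible perturbations with $H^1_0(\Omega\setminus\overline H)$, both of which the paper leaves implicit.
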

\begin{proof}
Let $f\in H^{1/2}(\partial\Omega)$ and set
    $$
    w_f:=1_{\Omega\setminus H}\mathcal{H}^{\Omega_H}1_{\partial\Omega}f- \mathcal{H}^\Omega f.
    $$
    Then, $w$ is harmonic in $\Omega\setminus H$ and hence for all $g\in H_0^1(\Omega\setminus H)$,
    \begin{equation}
    \label{e:harmonic}
    \int_{\Omega\setminus H} \nabla w_f\cdot \nabla gdx=0.
    \end{equation}
    Now, notice that 
    \begin{align*}
    \langle \mathscr{D}^{\flat}(\Omega,H)f,f\rangle& = \int_{\Omega_H}\nabla \mathcal{H}^{\Omega_H}1_{\Omega}f\cdot \nabla \mathcal{H}^{\Omega_H}1_{\Omega}fdx-\int_{\Omega}\nabla \mathcal{H}^{\Omega}f\cdot \nabla \mathcal{H}^{\Omega}fdx=\int_{\Omega}|\nabla w_f|^2dx\\
    &=\min\Big\{\int_\Omega |\nabla v|^2dx\,:\, v\in H_0^1(\Omega),\, v=-\mathcal{H}^{\Omega}f\text{ on }H\Big\},
    \end{align*}
    where the last equality follows from~\eqref{e:harmonic}. 
\end{proof}

 Our next lemma specializes to the case of $\Omega $ and $H$ having smooth boundary.
\begin{lemma}
\label{l:basicKernel}
    Suppose that $\Omega$ and $H$ have smooth boundary. Then $\mathscr{D}^{\flat}(\Omega,H)$ is smoothing and its kernel, $K_{\mathscr{D}^{\flat}(\Omega,H)}$, satisfies
    \begin{equation} 
    \label{e:minimization}
    K_{\mathscr{D}^{\flat}(\Omega,H)}(x,x)=\min\Big\{ \int_{\Omega}|\nabla v|^2dx\,:\, v\in H_0^1(\Omega),\, v|_{H}=-P_{\Omega}(\cdot,x)|_{H}\Big\},
    \end{equation}
    where $P_{\Omega}$ denotes the kernel of $\mc{H}^\Omega$.
\end{lemma}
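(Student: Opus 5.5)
The plan is to write $\mathscr{D}^{\flat}(\Omega,H)$ as an explicit composition of operators that are smoothing because $\partial\Omega$ is separated from $\overline{H}$. Then we identify its diagonal kernel values by applying Lemma~\ref{l:differenceDescribe} to an approximate identity.

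\textbf{Smoothing.} For $f\in H^{1/2}(\partial\Omega)$ we use the function $w_f=1_{\Omega\setminus H}\mathcal{H}^{\Omega\setminus H}1_{\partial\Omega}f-\mathcal{H}^{\Omega}f$ from the proof of Lemma~\ref{l:differenceDescribe}. On $\Omega\setminus H$ it is the harmonic function with $w_f|_{\partial\Omega}=0$ and $w_f|_{\partial H}=-\mathcal{H}^\Omega f|_{\partial H}$. Hence
\[
\mathscr{D}^{\flat}(\Omega,H)f=\partial_\nu w_f|_{\partial\Omega}=-\,\partial_\nu\mathcal{H}^{\Omega\setminus H}_{0,\partial\Omega}\big(\mathcal{H}^\Omega f|_{\partial H}\big)\big|_{\partial\Omega},
\]
where $\mathcal{H}^{\Omega\setminus H}_{0,\partial\Omega}$ denotes the harmonic extension of data on $\partial H$ that vanishes on $\partial\Omega$. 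Both factors are smoothing:
\begin{itemize}
\item The map $f\mapsto \mathcal{H}^\Omega f|_{\partial H}$ has kernel $P_\Omega(y,x)$ with $y\in\partial H$ and $x\in\partial\Omega$. Since $d(\partial H,\partial\Omega)>0$, this kernel is smooth by interior regularity together with smoothness of the Poisson kernel up to the boundary.
\item The map $g\mapsto \partial_\nu\mathcal{H}^{\Omega\setminus H}_{0,\partial\Omega}g|_{\partial\Omega}$ is smoothing from $\mathcal{D}'(\partial H)$ to $C^\infty(\partial\Omega)$. Elliptic regularity up to the smooth boundary $\partial\Omega$ shows that a harmonic function vanishing on $\partial\Omega$ is smooth near $\partial\Omega$, with bounds controlled by a low norm of $g$.
\end{itemize}
Therefore $\mathscr{D}^{\flat}(\Omega,H)$ has a smooth kernel $K$ on $\partial\Omega\times\partial\Omega$. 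The kernel is symmetric, by the symmetry of both DtN maps.

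\textbf{Diagonal formula.} Fix $x\in\partial\Omega$ and take $f_n\in C^\infty(\partial\Omega)$ with $f_n\to\delta_x$. Then:
\begin{itemize}
\item Since $K$ is smooth, $\langle \mathscr{D}^{\flat}f_n,f_n\rangle=\iint K f_nf_n\to K(x,x)$.
\item $\mathcal{H}^\Omega f_n\to P_\Omega(\cdot,x)$ in $C^\infty(\overline{U})$ for any neighbourhood $U\Subset\Omega$ of $\overline H$.
\end{itemize}
By Lemma~\ref{l:differenceDescribe}, $\langle \mathscr{D}^{\flat}f_n,f_n\rangle=M(g_n)$, where $g_n:=-\mathcal{H}^\Omega f_n|_H$ and
\[
M(g):=\min\Big\{\int_\Omega|\nabla v|^2\,:\,v\in H^1_0(\Omega),\ v|_H=g\Big\}.
\]
It remains to show $M(g_n)\to M(g)$ for $g=-P_\Omega(\cdot,x)|_H$. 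Fix a cutoff $\chi\in C_c^\infty(\Omega)$ with $\chi\equiv1$ on $U$. Then $\chi G$ is admissible for $M(G|_H)$ whenever $G$ is defined on $U$. Because the constraint is affine, the minimizer depends linearly on $g$, and $M(g)^{1/2}$ is a seminorm. Hence
\[
|M(g_n)^{1/2}-M(g)^{1/2}|\le \|\nabla(\chi(\mathcal{H}^\Omega f_n-P_\Omega(\cdot,x)))\|_{L^2}\to0.
\]
This gives~\eqref{e:minimization}.

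\textbf{Main obstacle.} The delicate point is the smoothing claim, specifically the uniform smoothness of $P_\Omega(y,x)$ for $y$ near $\overline H$ and $x$ on $\partial\Omega$, including derivatives in $x$. I would handle it by expressing $P_\Omega(y,x)=-\partial_{\nu_x}G_\Omega(y,x)$ through the Dirichlet Green function. This Green function is smooth off the diagonal up to a smooth boundary, by standard elliptic regularity applied to $G_\Omega(y,\cdot)$, which is harmonic away from $y$ and vanishes on $\partial\Omega$. Symmetry of $G_\Omega$ then gives joint smoothness in both variables.
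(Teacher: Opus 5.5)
Your proposal is correct and follows the paper's proof. The paper gets smoothing from elliptic regularity applied to the difference of harmonic extensions, which vanishes on $\partial\Omega$ and has smooth data $-\mathcal{H}^\Omega f|_{\partial H}$ on $\partial H$; it gets the diagonal formula from Lemma~\ref{l:differenceDescribe} applied to $f_n\to\delta_x$, and your seminorm continuity argument simply spells out the paper's terse appeal to $\mathcal{H}^\Omega f_n|_H\to P_\Omega(\cdot,x)|_H$ in $H^1$. One trivial slip: the cutoff should satisfy $\chi\equiv 1$ near $\overline{H}$ with $\supp\chi\subset U$, not $\chi\equiv 1$ on $U$.
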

\begin{proof}
The smoothing property of $\mathscr{D}^{\flat}(\Omega,H)$ follows from the fact that
\begin{gather*}
-\Delta (\mathcal{H}^{\Omega\setminus H}f-\mathcal{H}^{\Omega}f)=0\text{ in }\Omega\setminus H,\\
(\mathcal{H}^{\Omega\setminus H}-\mathcal{H}^{\Omega})f|_{\partial\Omega}=0,\qquad (\mathcal{H}^{\Omega\setminus H}-\mathcal{H}^{\Omega})f|_{\partial H}= -\mathcal{H}^{\Omega}f|_{\partial H}
\end{gather*}
and, by elliptic regularity $\mathcal{H}^{\Omega}f|_{\partial H}\in C^\infty(\partial H)$ and hence, again using elliptic regularity $\mathcal{H}^{\Omega\setminus H}f-\mathcal{H}^{\Omega}f)\in H^N(\Omega\setminus H)$ for any $N$.

    The lemma now follows from letting $f_\e\in H^{1/2}(\partial\Omega)$ with $f_\e \to \delta_x$ (using the fact that, since $H\Subset \Omega$, $\mathcal{H}^{\Omega}f_\e|_{H}\to P_{\Omega}(\cdot, x)|_{H}$ in $H^{1}$). 
\end{proof}

\subsection{Holes with large and small difference kernels}

We first show that if the hole $H$ is close to the boundary, then $K_{\mathscr{D}^{\flat}(\Omega,H)}(x_0,x_0)$ is small.
\begin{lemma}
\label{l:closeToBoundary}
    Let $\Omega$ have smooth boundary and $x_0\in \partial\Omega$ and $c_0>0$. Then there is $C>0$ such that for all $0<\e$ and $H$ with smooth boundary satisfying 
    $$
    H\Subset \{ x\in \Omega\,:\, d(x,x_0)>c_0,\, d(x,\partial\Omega)<2\e\},
    $$
    $$
    0\leq K_{\mathscr{D}^{\flat}(\Omega,H)}(x_0,x_0)\leq C\e.
    $$
\end{lemma}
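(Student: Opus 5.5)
We plan to use the variational characterization of Lemma~\ref{l:basicKernel}: $K_{\mathscr{D}^{\flat}(\Omega,H)}(x_0,x_0)$ is the minimum Dirichlet energy over $v\in H_0^1(\Omega)$ with $v=-P_\Omega(\cdot,x_0)$ on $H$. Nonnegativity is then immediate. For the upper bound it suffices to exhibit one admissible competitor with energy $O(\e)$. The natural choice is
$$
v:=-\chi\, P_\Omega(\cdot,x_0),
$$
where $\chi$ is a cutoff equal to $1$ on $H$ and supported in $\{x\in\Omega\,:\,d(x,x_0)>c_0/2,\ d(x,\partial\Omega)<3\e\}$, with $|\nabla\chi|\le C\e^{-1}$. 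The cutoff is taken in the variable $d(x,\partial\Omega)$, smoothed near $\partial\Omega$ for $\e$ small, times a fixed cutoff away from $x_0$. For $\e$ bounded below the statement is trivial, since $K$ is bounded uniformly over such $H$ by the same competitor with a fixed cutoff. Note that $v$ vanishes on $\partial\Omega$ because $P_\Omega(\cdot,x_0)=0$ on $\partial\Omega\setminus\{x_0\}$. Hence $v\in H_0^1(\Omega)$, and $v$ agrees with $-P_\Omega(\cdot,x_0)$ on $H$.

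The key input is the behaviour of the Poisson kernel away from its pole. Since $\partial\Omega$ is smooth, $P_\Omega(\cdot,x_0)$ is smooth up to the boundary on $\{d(x,x_0)>c_0/2\}$ with uniform bounds, and it vanishes on $\partial\Omega$ there. Therefore
$$
|P_\Omega(x,x_0)|\le C\,d(x,\partial\Omega),\qquad |\nabla P_\Omega(x,x_0)|\le C .
$$
This is standard boundary regularity: $P_\Omega(\cdot,x_0)$ is, up to normalization, the normal derivative of the Green's function, which is smooth away from the diagonal up to the boundary.

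The energy estimate then proceeds term by term. We have
$$
|\nabla v|\le |\nabla\chi|\,|P|+|\nabla P|\le C\e^{-1}\cdot C\e + C\le C
$$
on $\supp\chi$. The volume of $\supp\chi$ is at most $C\e$, since it lies in a boundary collar of width $3\e$. Consequently $\int_\Omega|\nabla v|^2\le C\e$, and the minimum in Lemma~\ref{l:basicKernel} is bounded by $C\e$, with $C$ depending only on $\Omega$, $x_0$ and $c_0$, not on $H$.

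The main (mild) obstacle is justifying the uniform linear vanishing of $P_\Omega(\cdot,x_0)$ near $\partial\Omega$ away from $x_0$, together with the bounded gradient. One way is to write $P_\Omega(x,x_0)=\partial_{\nu_y}G_\Omega(x,y)|_{y=x_0}$ and invoke smoothness of $G_\Omega$ up to the boundary off the diagonal. An alternative is local elliptic boundary regularity for the harmonic function $P_\Omega(\cdot,x_0)$, which has zero Dirichlet data on $\partial\Omega\cap\{d(\cdot,x_0)>c_0/4\}$. A minor additional point is constructing $\chi$ so that it is smooth near $\partial\Omega$; this is handled by using $\tilde\chi(d(x,\partial\Omega)/\e)$, since $d$ is smooth in a fixed collar.
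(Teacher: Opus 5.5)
Your proposal is correct and is essentially the paper's proof. The paper uses the same competitor $-\chi P_\Omega(\cdot,x_0)$, with $\chi$ supported in the collar $\{d(y,x_0)\ge c_0/2,\ d(y,\partial\Omega)\le 3\e\}$ and $|\nabla\chi|\le C\e^{-1}$, and bounds its energy by $C\e$ using $\|P_\Omega(\cdot,x_0)\|_{L^\infty}\le C\e$ on that collar (from smoothness and vanishing on $\partial\Omega\setminus\{x_0\}$) together with a bounded gradient.
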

\begin{proof}
    Let $\chi \in C^\infty(\Omega;[0,1])$ with $\supp (1-\chi)\cap H=\emptyset$, $d(\supp \chi, x_0)>c_0/2$ and 
    $$
    \supp \chi\subset \mathcal{C}_\e,\qquad \|\nabla \chi \|_{L^\infty}\leq C\e^{-1}
    $$
        where 
    $$
    \mathcal{C}_\e:=\{ y\in \Omega\,:\, d(y,x_0)\geq c_0/2,\, d(y,\partial\Omega)\leq 3\e\}.
    $$
    (It is easy to construct such a function using Fermi normal coordinates near $\partial\Omega$). Then, observe that $-\chi P_{\Omega}(\cdot, x_0)$ is a candidate for~\eqref{e:minimization} and
    $$
    \int |\nabla \chi P_{\Omega}(y,x_0)|^2dy\leq C\e^{-2}\int_{\mc{C}_\e}|P_{\Omega}(y,x_0)|^2dy+ C\int \chi^2|\nabla P_{\Omega}(y,x_0)|^2dy\leq C\e.
    $$
    Here, we use that since $d(\supp \chi,x_0)\geq c_0/2$, $P_{\Omega}(y,x_0)$ is smooth in $y$ and $P(\cdot ,x_0)|_{\partial\Omega\setminus\{x_0\}}=0$ which implies that,  $\|P_{\Omega}(\cdot, x_0)\|_{L^\infty(\mathcal{C}_\e)}\leq C\e$. 
\end{proof}

Next, we show that if the hole $H$ is not close to the boundary, then $K_{\mathscr{D}^{\flat}(\Omega,H)}(x_0,x_0)$ is controlled from below.
\begin{lemma}
\label{l:kernelLower}
    Let $\Omega\subset \mathbb{R}^2$ have smooth boundary, $x_0\in\partial\Omega$, $c_0>0$, $1\leq p<\infty$. Then there is $c>0$ such that for all $H$ with smooth boundary satisfying
    $$
    H\subset \{ d(x,\partial\Omega)>c_0\},
    $$
     we have
    $$
    K_{\mathscr{D}^{\flat}(\Omega,H)}(x_0,x_0)\geq c(\vol(H))^{2/p}.
    $$
    \end{lemma}
    \begin{proof}
    Suppose that $v$ is a candidate for~\eqref{e:minimization}. Then, 
    $$
    v|_{H}=-P_{\Omega}(\cdot, x_0)|_{H}
    $$
    and, hence, since $P_{\Omega}(\cdot, x_0)>0$ in $\Omega$ there is $c>0$ satisfying
    $$
    c(\vol(H))^{1/p}<\inf_{x\in H}P_{\Omega}(x, x_0)\vol(H)^{1/p}\leq \|v\|_{L^p(H)}\leq\| v\|_{L^p(\Omega)}\leq C\|v\|_{H^1(\Omega)}\leq C\|\nabla v\|_{L^2(\Omega)}.
    $$
    Hence, the lemma follows from~\eqref{e:minimization}.
    \end{proof}

     \section{Kernels of DtN differences in $\Omega_{\alpha}$}

    From now on, we identify $\partial\Omega_{\alpha}$ with $\mathbb{S}^1$ using a constant speed, bijective map $\gamma_\alpha:\mathbb{S}^1\to \partial\Omega_{\alpha}$ such that $\gamma_{\alpha}(0)=0$. 
Then, let 
   $$
   w_{\alpha}:\mathbb{S}^1\to \to \mathbb{R},\qquad x_{\alpha}:\mathbb{S}^1\to \mathbb{S}^1
   $$
   such that $x_{\alpha},w_{\alpha}$ are smooth away from $0$, $w_{\alpha}>0$ away from $0$, $x_{\alpha}$ is a homeomorphism, smooth away from $0$, and, in a neighborhood of $0$ are given by
   $$
   x_{\alpha}(s)=\sgn(s)|s|^{\frac{\pi}{\alpha}},\qquad w_{\alpha}(s)=|s|^{\frac{\pi}{\alpha}-1}. 
   $$

   \begin{lemma}\label{l:kernelForm}
       Let $H\Subset \Omega_{\alpha}$ have smooth boundary. Then, the kernel of $\mathscr{D}^{\flat}(\Omega_{\alpha},H)$ has the form
       \begin{equation}
       \label{e:kernelForm1}
       k_{\mathscr{D}^{\flat}(\Omega_{\alpha},H)}(s,t)= w_\alpha(t)k(x_\alpha(t),x_\alpha(s))w_{\alpha}(s),
       \end{equation}
       where $k\in C^\infty(\mathbb{S}^1\times \mathbb{S}^1)$ and $k(0,0)= ck_{\mathscr{D}^{\flat}(\Omega, f_{\alpha}^{-1}(H))}(0,0)$ and $c>0$
   \end{lemma}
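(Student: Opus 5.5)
The plan is to transport the problem to the smooth domain $\Omega$ by the conformal map $f_\alpha$ and use conformal invariance of the Dirichlet energy. Write $\tilde H:=f_\alpha^{-1}(H)\Subset\Omega$; this set has smooth boundary because $f_\alpha$ is a biholomorphism away from $0$ and $H$ stays away from the corner. Conformal invariance of harmonicity and of $\int|\nabla v|^2$ in two dimensions shows that $v\mapsto v\circ f_\alpha$ is a bijection $H^1_0(\Omega_\alpha)\to H^1_0(\Omega)$ that preserves the Dirichlet energy. It also gives $\mathcal{H}^{\Omega_\alpha}g\circ f_\alpha=\mathcal{H}^{\Omega}(g\circ f_\alpha|_{\partial\Omega})$.

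Next, Lemma~\ref{l:differenceDescribe} expresses $\langle\mathscr{D}^\flat(\Omega_\alpha,H)g,g\rangle$ as a constrained energy minimum. The constraint $v=-\mathcal{H}^{\Omega_\alpha}g$ on $H$ pulls back to the constraint $v\circ f_\alpha=-\mathcal{H}^\Omega(g\circ f_\alpha)$ on $\tilde H$. Therefore
$$\langle\mathscr{D}^\flat(\Omega_\alpha,H)g,g\rangle=\langle\mathscr{D}^\flat(\Omega,\tilde H)(g\circ f_\alpha),g\circ f_\alpha\rangle_{L^2(\partial\Omega)},$$
and polarization upgrades this to the bilinear identity.

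Now parametrize. Let $\tau:=f_\alpha^{-1}\circ\gamma_\alpha:\mathbb{S}^1\to\partial\Omega$, and write $\partial\Omega$ in arclength $\tilde s$. Near $0$, $\gamma_\alpha$ traces the two sides of the corner at constant speed. Since $f_\alpha^{-1}(z)=z^{\pi/\alpha}$, the preimage $\tilde s=\tau(s)$ behaves like $\pm c|s|^{\pi/\alpha}$ times a smooth positive factor. More precisely, in the smooth coordinate along each side, $\tau(s)=\sgn(s)|s|^{\pi/\alpha}a_\pm(s)$ with $a_\pm$ smooth and positive. I would absorb this into the choice of $x_\alpha$: the paper only fixes $x_\alpha$ near $0$ up to the stated model, so I would choose the arclength parametrization of $\partial\Omega$ to make $\tau=\tilde\gamma\circ x_\alpha$ exactly, after a harmless smooth reparametrization $\kappa$ of $\partial\Omega$.

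Writing the pairing in kernels, $\int\!\!\int k_{\mathscr{D}^\flat(\Omega,\tilde H)}(\tilde s,\tilde t)\,g(\tau^{-1}\tilde s)g(\tau^{-1}\tilde t)\,d\tilde s\,d\tilde t$, and changing variables $\tilde s=\kappa(x_\alpha(s))$ produces Jacobians $\kappa'(x_\alpha(s))\,x_\alpha'(s)=\kappa'(x_\alpha(s))\,\tfrac{\pi}{\alpha}w_\alpha(s)$. The resulting kernel is
$$w_\alpha(t)\,k(x_\alpha(t),x_\alpha(s))\,w_\alpha(s),\qquad k(x,y)=(\tfrac{\pi}{\alpha})^2\kappa'(x)\kappa'(y)\,K_{\mathscr{D}^\flat(\Omega,\tilde H)}(\kappa(x),\kappa(y)).$$
Away from $0$ I would use the same formula, with $w_\alpha>0$ and $x_\alpha$ smooth; the prefactor is then just the smooth Jacobian $|(\kappa\circ x_\alpha)'|$ divided by $w_\alpha$. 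To make this consistent globally, I would define $\kappa$ via $\kappa\circ x_\alpha=\tau$ and require $x_\alpha'=\tfrac{\pi}{\alpha}w_\alpha$ everywhere (this can be arranged by choosing $w_\alpha:=\tfrac{\alpha}{\pi}x_\alpha'$).

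The smoothness of $k$ then follows from Lemma~\ref{l:basicKernel}, since $\mathscr{D}^\flat(\Omega,\tilde H)$ is smoothing. Evaluating at the origin gives
$$k(0,0)=(\tfrac{\pi}{\alpha})^2\kappa'(0)^2K_{\mathscr{D}^\flat(\Omega,\tilde H)}(0,0),$$
so $c=(\pi\kappa'(0)/\alpha)^2>0$.

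The main obstacle is checking that $\kappa=\tau\circ x_\alpha^{-1}$ is a smooth diffeomorphism of $\mathbb{S}^1$ near $0$ with $\kappa'(0)\ne0$. This needs an expansion of $\gamma_\alpha$ near the corner. The curve $\Gamma$ coincides with the real axis near $0$, because $\gamma(s)=s$ near $0$. So each side of $\Omega_\alpha$ near the corner is a straight ray, namely $\mathbb{R}_+$ and $e^{i\alpha}\mathbb{R}_+$. Hence $\gamma_\alpha(s)=|s|$ or $e^{i\alpha}|s|$ up to speed normalization, and $\tau(s)=\pm c|s|^{\pi/\alpha}$ exactly. This makes $\kappa$ linear near $0$, and smoothness across $0$ is immediate.
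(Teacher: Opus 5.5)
Your proposal is correct and follows essentially the paper's route. You transplant $\mathscr{D}^{\flat}(\Omega_\alpha,H)$ to the smooth domain $\Omega$ via $f_\alpha$, use that $\mathscr{D}^{\flat}(\Omega,f_\alpha^{-1}(H))$ is smoothing (Lemma~\ref{l:basicKernel}), and read off $w_\alpha$ and $x_\alpha$ from the Jacobian of $f_\alpha^{-1}(z)=z^{\pi/\alpha}$ along the two straight sides. The differences are only in presentation: you derive the paper's conjugation identity in weak form from Lemma~\ref{l:differenceDescribe} and conformal invariance of the Dirichlet energy, rather than from the transformation law of normal derivatives, and you spell out the kernel change of variables, including the linearity of $\kappa$ near $0$, which the paper leaves implicit.
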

   \begin{proof}
       Observe that $\mathscr{D}^{\flat}(\Omega,f_{\alpha}^{-1}(H))$ is smoothing. Therefore, since 
       $$
       \mathscr{D}^{\flat}(\Omega_\alpha,H)= \frac{\pi}{\alpha}|z|^{\frac{\pi}{\alpha}-1} f_{\alpha}^*\mathscr{D}^{\flat}(\Omega, f_{\alpha}^{-1}(H))(f_{\alpha}^*)^{-1},
       $$
       the lemma follows.
   \end{proof}

   Next let $H_1,H_0\Subset \Omega_{\alpha}$ be simply connected and equal length $L$. We again identify the boundaries of $H_i$ with $\mathbb{S}^1$ using maps $\gamma_i:\mathbb{S}^1\to \partial H_i$ so that $\partial(\Omega_{\alpha}\setminus H_i)\cong \mathbb{S}^1\sqcup \mathbb{S}^1$ and hence $L^2(\partial(\Omega_{\alpha}\setminus H_i))\cong L^2(\mathbb{S}^1;\mathbb{C}^2)$, where we identify the first entry with $\partial H_i$ and the second with $\partial\Omega_{\alpha}$. 
      
   An argument almost identical to that for Lemma~\ref{l:kernelForm} yields the following
   \begin{lemma}
   \label{l:kernelForm2}
       Let $H_i\Subset \Omega_{\alpha}$, $i=1,2$ be open, simply connected, possibly empty, with smooth boundaries both having length $L\geq 0$. Then the kernel of $\mathscr{D}(H_1,H_0):=\Lambda^{\Omega_\alpha\setminus H_1}-\Lambda^{\Omega_{\alpha}\setminus H_0}$ has the form
       \begin{equation}
       \label{e:kernelForm}
        K_{\mathscr{D}(H_1,H_0)}=\begin{pmatrix} K_{HH}(s_H,t_H)&K_{H\Omega }(s_H,x_\alpha(t))w_\alpha(t)\\\overline{K}_{H \Omega}(t_H,x_\alpha(s))w_\alpha(s)&K_{\Omega\Omega}(x_\alpha(t),x_\alpha(s))w_{\alpha}(s)w_{\alpha}(t)\end{pmatrix},
       \end{equation}
       where $K_{HH}$, $K_{H\Omega}$, $K_{\Omega\Omega}\in C^\infty(\mathbb{S}^1\times \mathbb{S}^1)$ and $K_{\Omega\Omega}(0,0)=\frac{\pi^2}{\alpha^2}(k_{\mathscr{D}^{\flat}(\Omega,f_{\alpha}^{-1}(H_1))}(0,0)-k_{\mathscr{D}^{\flat}(\Omega,f_{\alpha}^{-1}(H_0))}(0,0))$.
   \end{lemma}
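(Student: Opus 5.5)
The plan is to reduce $\mathscr{D}(H_1,H_0)$ to two operators of the form $\mathscr{D}^{\flat}$ plus smoothing pieces, and then transport everything to the smooth domain $\Omega$ by the conformal map $f_\alpha$, exactly as in Lemma~\ref{l:kernelForm}.

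\textbf{Block structure.} Write $\Lambda^{\Omega_\alpha\setminus H_i}$ in block form with respect to $L^2(\partial H_i)\oplus L^2(\partial\Omega_\alpha)$, after identifying $\partial H_1$ and $\partial H_0$ with $\mathbb{S}^1$ via $\gamma_1,\gamma_0$.

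\textbf{The $\Omega\Omega$ block.} This block of $\Lambda^{\Omega_\alpha\setminus H_i}$ is $1_{\partial\Omega_\alpha}\Lambda^{\Omega_\alpha\setminus H_i}1_{\partial\Omega_\alpha}$. Hence the $\Omega\Omega$ block of $\mathscr{D}(H_1,H_0)$ is
$$
\mathscr{D}^{\flat}(\Omega_\alpha,H_1)-\mathscr{D}^{\flat}(\Omega_\alpha,H_0),
$$
because the two copies of $\Lambda^{\Omega_\alpha}$ cancel. Applying Lemma~\ref{l:kernelForm} to each term (if $H_i=\emptyset$ the term is zero) gives the claimed form $K_{\Omega\Omega}(x_\alpha(t),x_\alpha(s))w_\alpha(s)w_\alpha(t)$. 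The smooth kernel $K_{\Omega\Omega}$ is the difference of the pulled-back smooth kernels, and its value at $(0,0)$ follows from tracking the constant in the intertwining identity
$$
\mathscr{D}^{\flat}(\Omega_\alpha,H)=\tfrac{\pi}{\alpha}|z|^{\frac{\pi}{\alpha}-1}f_\alpha^*\mathscr{D}^{\flat}(\Omega,f_\alpha^{-1}(H))(f_\alpha^*)^{-1}.
$$
In kernel terms this is a Jacobian factor $\frac{\pi}{\alpha}|s|^{\frac\pi\alpha-1}$ in each variable, once one checks that near $0$ the arclength parametrization of $\partial\Omega_\alpha$ pulls back to $x_\alpha$ up to a smooth diffeomorphism fixing $0$ with derivative $1$ there. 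That yields the factor $\pi^2/\alpha^2$.

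\textbf{The $HH$ block.} This block is $1_{\partial H_1}\Lambda^{\Omega_\alpha\setminus H_1}1_{\partial H_1}-1_{\partial H_0}\Lambda^{\Omega_\alpha\setminus H_0}1_{\partial H_0}$, pulled back to $\mathbb{S}^1$. Each term is the DtN map of a smooth closed curve plus a smoothing error coming from the far boundary. Since $\partial H_1$ and $\partial H_0$ are isometric with equal length $L$, the principal parts $|D|$ coincide under arclength parametrization, and all lower-order symbols agree since the holes are isometric. So the difference should be smoothing.

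I expect this to be the main obstacle. What matters is that the DtN operator of a smooth hole, modulo smoothing, depends only on local geometry, and that this local data agrees after the isometry. To prove it, I would write $\Lambda^{\Omega_\alpha\setminus H_i}$ near $\partial H_i$ as $\Lambda^{\mathbb{C}\setminus \overline{H_i}}$ (the exterior DtN map) plus a smoothing operator, by elliptic regularity as in Lemma~\ref{l:basicKernel}. Exterior DtN maps of congruent holes are unitarily equivalent by a rigid motion. If one only assumes equal length, I would instead use the full symbol expansion and accept $K_{HH}$ smooth only as stated; for the identity needed, isometric holes suffice. If $L=0$ the block is absent.

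\textbf{The off-diagonal blocks.} These are $1_{\partial H_i}\Lambda^{\Omega_\alpha\setminus H_i}1_{\partial\Omega_\alpha}$. Their kernels are $\partial_{\nu_y}$ of the Poisson kernel of $\Omega_\alpha\setminus H_i$ evaluated at $y\in\partial H_i$ and boundary point $x\in\partial\Omega_\alpha$. Since $\partial H_i$ is a positive distance from $\partial\Omega_\alpha$, this is smooth in $y$. Its dependence on $x$ near the corner is handled by conformally mapping by $f_\alpha^{-1}$ to $\Omega\setminus f_\alpha^{-1}(H_i)$, which has smooth boundary. There the kernel is smooth in both variables, and pulling back in the $\partial\Omega_\alpha$ variable produces exactly one factor $w_\alpha$ composed with $x_\alpha$. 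Harmonicity and normal derivatives on $\partial H_i$ are preserved up to the smooth nonvanishing factor $|(f_\alpha^{-1})'|$ away from $0$.

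\textbf{Adjoint block.} The remaining off-diagonal block follows by self-adjointness, since $\mathscr{D}(H_1,H_0)$ is symmetric.
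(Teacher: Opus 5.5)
\paragraph{Verdict.} Your treatment of the $\Omega\Omega$ block, the off-diagonal blocks and the adjoint block is the paper's argument and is fine: conformal transplantation by $f_\alpha$ exactly as in Lemma~\ref{l:kernelForm}, with one Jacobian factor $\frac{\pi}{\alpha}|s|^{\frac{\pi}{\alpha}-1}$ per $\partial\Omega_\alpha$ variable, whence $\pi^2/\alpha^2$. The gap is the $HH$ block. The lemma assumes only that $\partial H_1$ and $\partial H_0$ have the same length $L$, with $\gamma_i$ constant-speed identifications; it does not assume the holes are congruent.

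\paragraph{Where your $HH$ argument falls short.} Your rigid-motion argument covers only congruent holes, and even then only if $\gamma_0=R\circ\gamma_1$. Otherwise you must compare an operator with its conjugate by a rotation or reflection of $\mathbb{S}^1$, which is the same problem again. Your fallback, ``use the full symbol expansion,'' does not close this. You assert that the lower-order symbols agree \emph{since the holes are isometric}, i.e.\ you expect them to depend on the curvature of $\partial H_i$. If they did, the $HH$ block would fail to be smoothing for non-congruent holes of equal perimeter, and the lemma would be false. Your version does suffice for Theorem~\ref{t:gap}, where the holes are discs of radius $\epsilon$, whose exterior DtN maps are exactly $\epsilon^{-1}|D_\theta|$.

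\paragraph{The missing idea.} In two dimensions all lower-order terms vanish. In a constant-speed parametrization, the DtN map of a smooth simply connected hole is $\frac{2\pi}{L}|D_\theta|$ modulo a smoothing operator; this is the fact behind the Rozenblum, Edward, and Girouard--Parnovski--Polterovich--Sher asymptotics, and it is precisely why the hypothesis is equal length. Here is how to prove it.
\begin{enumerate}
\item Reduce to the exterior DtN map by your cutoff argument.
\item Take the exterior Riemann map $\Phi_i:\{|z|>1\}\to\mathbb{C}\setminus\overline{H_i}$, which is smooth up to the boundary. Define $a$ by $\Phi_i(e^{ia(\theta)})=\gamma_i(\theta)$, after a reflection if necessary so that $a$ preserves orientation. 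Then $|\Phi_i'|^{-1}=\frac{2\pi}{L}a'(\theta)$ on the boundary.
\item Conformal invariance turns the exterior DtN map into $g\mapsto\frac{2\pi}{L}a'(\theta)\,\big(|D|(g\circ a^{-1})\big)(a(\theta))$.
\item Write $|D|u(\phi)=\frac{1}{2\pi}\,\mathrm{p.v.}\!\int\cot\big(\frac{\phi-\phi'}{2}\big)u'(\phi')\,d\phi'$ and substitute $\phi'=a(\theta')$. The operator becomes $\frac{1}{L}\,\mathrm{p.v.}\!\int a'(\theta)\cot\big(\frac{a(\theta)-a(\theta')}{2}\big)g'(\theta')\,d\theta'$. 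Moreover
\[
a'(\theta)\cot\Big(\frac{a(\theta)-a(\theta')}{2}\Big)-\cot\Big(\frac{\theta-\theta'}{2}\Big)=2\,\partial_\theta\log\frac{\sin\big(\frac{a(\theta)-a(\theta')}{2}\big)}{\sin\big(\frac{\theta-\theta'}{2}\big)}\in C^\infty(\mathbb{S}^1\times\mathbb{S}^1),
\]
since the quotient is smooth and positive.
\item Integrate by parts in $\theta'$. The difference from $\frac{2\pi}{L}|D_\theta|$ is then an operator with smooth kernel, i.e.\ smoothing.
\end{enumerate}
The result depends on $H_i$ only through $L$, so the $HH$ block of $\mathscr{D}(H_1,H_0)$ is smoothing for any two simply connected smooth holes of equal perimeter.
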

    
    \section{Trace class estimates on DtN differences in the plane }

In this section, we study regularised traces of $\mathscr{D}^\flat(\Omega, H)$ and $\mathscr{D}(H_1,H_0)$. 

\subsection{Trace class properties of operators with kernels of the form~\eqref{e:kernelForm}}
We first show that these operators are, in fact, trace class. To do this, we recall the following basic fact:
\begin{lemma}
\label{l:regularity}
    Let $\beta>-\frac{1}{2}$, $\alpha>0$ and $f\in C_c^\infty(-1,1)$, then
    $$
    |t|^\beta f(\sgn (t)|t|^\alpha)\in H^{r},\qquad 
    r<\beta+\frac{1}{2},\qquad f(\sgn(t)|t|^{\alpha})\in H^
    r,\qquad r<\alpha+\frac{1}{2}
    $$
\end{lemma}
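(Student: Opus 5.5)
The plan is to compute the Fourier transform directly and read off the decay. For the first claim, set $g(t):=|t|^\beta f(\sgn(t)|t|^\alpha)$. Since $f\in C_c^\infty(-1,1)$, $g$ is supported in $[-1,1]$, and $g$ is smooth away from $t=0$. We split $f=f_e+f_o$ into even and odd parts. Then
$$
f(\sgn(t)|t|^\alpha)=f_e(|t|^\alpha)+\sgn(t)f_o(|t|^\alpha),
$$
and each piece is a function of $|t|$ times either $1$ or $\sgn(t)$. Fix a cutoff $\chi\in C_c^\infty(-2,2)$ with $\chi\equiv 1$ on $[-1,1]$. Taylor expand $f_e(y)$ and $f_o(y)$ at $y=0$ to order $N$. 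This writes $g$ near $0$ as a finite sum of terms
$$
c_k\,\chi(t)\,|t|^{\beta+\alpha k}\quad\text{and}\quad c_k'\,\chi(t)\,\sgn(t)|t|^{\beta+\alpha k},
$$
plus a remainder $\chi(t)|t|^{\beta}O(|t|^{\alpha(N+1)})$. The remainder is $C^{M}$ with compactly supported derivatives once $N$ is large, so it lies in $H^r$ for any prescribed $r$. The part of $g$ away from $0$ is smooth and compactly supported, hence in every $H^r$.

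It remains to handle the model functions $\chi(t)|t|^\gamma$ and $\chi(t)\sgn(t)|t|^\gamma$ with $\gamma\ge\beta>-\tfrac12$, so that they lie in $L^2$. The standard fact is that these lie in $H^r$ for all $r<\gamma+\tfrac12$. To see this, note that the Fourier transform of the homogeneous distribution $|t|^\gamma$ (or $\sgn(t)|t|^\gamma$) is $c|\xi|^{-1-\gamma}$ (respectively $c\,\sgn(\xi)|\xi|^{-1-\gamma}$) away from $\xi=0$, for $\gamma\notin\{-1,-3,\dots\}$. Multiplying by $\chi$ corresponds to convolving with a Schwartz function. This preserves the decay $|\hat{\ }(\xi)|\le C\langle\xi\rangle^{-1-\gamma}$, because the only singularity of the transform sits at $\xi=0$ and is locally integrable, since $\gamma>-1$. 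Then
$$
\int\langle\xi\rangle^{2r}\langle\xi\rangle^{-2-2\gamma}\,d\xi<\infty
$$
exactly when $r<\gamma+\tfrac12$. As an alternative that avoids distributions, one can use the Slobodeckij seminorm, since $|t|^\gamma$ is H\"older/Besov of order $\gamma+\tfrac12$ in the $L^2$ scale. If $\gamma$ is an even integer (for $|t|^\gamma$) or an odd integer (for $\sgn(t)|t|^\gamma$), the term is smooth and causes no difficulty. Since every exponent in the expansion is $\gamma=\beta+\alpha k\ge\beta$, all terms lie in $H^r$ for $r<\beta+\tfrac12$, which proves the first claim.

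The second claim is the case $\beta=0$ with one extra observation. In the expansion of $f(\sgn(t)|t|^\alpha)$, the $k=0$ term is the constant $f(0)\chi(t)$, which is smooth. The first possibly nonsmooth term is $f'(0)\chi(t)\sgn(t)|t|^\alpha$, and the later terms have exponents $\alpha k$ with $k\ge1$. Hence every term lies in $H^r$ for $r<\alpha+\tfrac12$, which gives the second claim.

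The main technical point is justifying the decay $\langle\xi\rangle^{-1-\gamma}$ of the Fourier transform of $\chi|t|^\gamma$. This step uses homogeneity: $\widehat{\chi|t|^\gamma}-\chi_1(\xi)\widehat{|t|^\gamma}$ is rapidly decreasing, where $\chi_1$ is a cutoff away from $\xi=0$. The reason is that $(1-\chi)|t|^\gamma$ is smooth and its transform is rapidly decreasing away from the origin.
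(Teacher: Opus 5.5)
The paper gives no proof of this lemma; it is simply recalled as a ``basic fact,'' so there is no argument of the authors' to compare yours with. Your approach is the standard one and is correct: Taylor expand $f$ at $0$, reduce to the model functions $\chi|t|^\gamma$ and $\chi\,\sgn(t)|t|^\gamma$ with $\gamma=\beta+\alpha k\ge\beta$, and absorb the remainder into a compactly supported $C^M$ function with $M<\beta+\alpha(N+1)$. This includes your extra observation that yields $r<\alpha+\frac12$ in the second claim.

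One justification in your middle paragraph is wrong. You say the singularity of the transform at $\xi=0$ is locally integrable ``since $\gamma>-1$.'' In fact $|\xi|^{-1-\gamma}$ is locally integrable near $0$ only when $\gamma<0$; you have conflated integrability of $|t|^\gamma$ with integrability of its Fourier transform. For $\gamma\ge0$, $\widehat{|t|^\gamma}$ is a regularized homogeneous distribution, not an $L^1_{\mathrm{loc}}$ function. So the ``convolve with $\hat\chi$'' argument does not apply as written, and exponents $\gamma\ge 0$ certainly occur in your expansion.

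Your final paragraph is the correct fix, and it works for every $\gamma>-1$. Since $\chi|t|^\gamma$ has compact support, $\widehat{\chi|t|^\gamma}$ is smooth. For $|\xi|\ge1$ it equals $c|\xi|^{-1-\gamma}-\widehat{(1-\chi)|t|^\gamma}(\xi)$, by homogeneity and evenness of $\widehat{|t|^\gamma}$ away from the origin. The last term is $O(|\xi|^{-\infty})$, but not merely because $(1-\chi)|t|^\gamma$ is smooth. The real reason is the symbol bounds $|\partial_t^N((1-\chi)|t|^\gamma)|\le C_N\langle t\rangle^{\gamma-N}$. These give $\partial_t^N((1-\chi)|t|^\gamma)\in L^1$ for $N>\gamma+1$, hence $|\xi|^N|\widehat{(1-\chi)|t|^\gamma}(\xi)|$ is bounded for all such $N$. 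This yields $|\widehat{\chi|t|^\gamma}(\xi)|\le C\langle\xi\rangle^{-1-\gamma}$, and likewise for $\sgn(t)|t|^\gamma$.

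With the middle-paragraph reasoning replaced by this one, the proof is complete.
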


                \begin{lemma}
                \label{l:traceClass}
            Suppose that $k:\mathbb{S}^1\times \mathbb{S}^1\to \mathbb{C}$ is smooth. Then the operators $B_i$ $i=1,2,3$ given by
            \begin{align*}
            B_1u(t)&= \int k(x_{\alpha}(t),x_{\alpha}(s))w_\alpha(s)w_\alpha(t)u(s)ds\\
            B_2u(t)&= \int k(x_{\alpha}(t),x_{\alpha}(s))w_\alpha(s)u(s)ds\\
            B_3u(t)&= \int k(x_{\alpha}(t),x_{\alpha}(s))w_\alpha(t)u(s)ds
            \end{align*}
            are trace class.
        \end{lemma}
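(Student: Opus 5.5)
Since a product of two Hilbert--Schmidt operators is trace class, the plan is to write each $B_i$ as a finite sum of such products. The key input is Lemma~\ref{l:regularity}: the functions $t\mapsto w_\alpha(t)$ and $t\mapsto g(x_\alpha(t))$, for $g$ smooth, lie in $H^r(\mathbb{S}^1)$ for some $r>0$. Near $0$ these are $|t|^{\frac{\pi}{\alpha}-1}$ (note $\frac{\pi}{\alpha}-1>-\frac12$ since $\alpha<2\pi$) and $g(\sgn(t)|t|^{\pi/\alpha})$, and away from $0$ they are smooth. So they lie in $H^r$ for every $r<\min(\frac{\pi}{\alpha}-\frac12,\frac{\pi}{\alpha}+\frac12)$, and this range contains some $r>0$.

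Next I expand $k$ in a Fourier series on $\mathbb{S}^1\times\mathbb{S}^1$:
$$
k(x,y)=\sum_{m,n\in\mathbb{Z}}\hat k_{mn}e^{imx}e^{iny},
$$
with $\hat k_{mn}=O(\langle m\rangle^{-N}\langle n\rangle^{-N})$ for every $N$. Then $B_1=\sum_{m,n}\hat k_{mn}\,a_m\otimes \bar b_n$, a sum of rank-one operators, where $a_m(t)=e^{imx_\alpha(t)}w_\alpha(t)$ and $b_n(s)=e^{-inx_\alpha(s)}w_\alpha(s)$. Each rank-one operator $a\otimes\bar b$ has trace norm $\|a\|_{L^2}\|b\|_{L^2}$. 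It therefore suffices to bound $\|a_m\|_{L^2}$ and $\|b_n\|_{L^2}$ polynomially in $m,n$. This is immediate: $|e^{imx_\alpha}|=1$ and $w_\alpha\in L^2$ because $2(\frac{\pi}{\alpha}-1)>-1$. So
$$
\|B_1\|_{\mathcal{L}^1}\leq \sum_{m,n}|\hat k_{mn}|\,\|w_\alpha\|_{L^2}^2<\infty.
$$
For $B_2$ and $B_3$, one of the two factors $w_\alpha$ is replaced by $1$, and $1\in L^2$, so the same bound applies. This argument does not even need Lemma~\ref{l:regularity}.

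The only subtlety to check is that $x_\alpha:\mathbb{S}^1\to\mathbb{S}^1$ is a well-defined map into the circle, so that $e^{imx_\alpha(t)}$ is a genuine function and $k(x_\alpha(t),x_\alpha(s))$ makes sense with $k$ periodic. This holds because $x_\alpha$ is a homeomorphism of $\mathbb{S}^1$. Convergence of the series of rank-one operators in trace norm then identifies its limit with $B_i$, since it also converges in operator norm to the operator with the given kernel. Hence each $B_i$ is trace class, and its trace equals $\int k(x_\alpha(t),x_\alpha(t))w_\alpha(t)^2\,dt$ (respectively with $w_\alpha(t)$ in place of $w_\alpha(t)^2$ for $B_2$ and $B_3$).
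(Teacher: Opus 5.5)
Your argument is correct, but it takes a different route from the paper's.

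The paper Taylor expands $k$ to order $M$ in the first variable and writes $B_1=\sum_{j<M}x_\alpha^jw_\alpha\otimes a_j(x_\alpha)w_\alpha+E_M$. It then uses the Sobolev regularity of Lemma~\ref{l:regularity} to show that, for $M$ large, the remainder $E_M$ maps $L^2\to H^{r}$ with $r>1$, so $E_M$ is trace class. Since $B_1-E_M$ has finite rank, $B_1$ is trace class.

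You instead expand $k$ in a double Fourier series on the torus. This writes $B_i$ as an absolutely trace-norm convergent series of rank-one operators, each with trace norm at most $|\hat k_{mn}|\,\|w_\alpha\|_{L^2}^2$ (or $|\hat k_{mn}|\,\|w_\alpha\|_{L^2}\|1\|_{L^2}$ for $B_2,B_3$). The only analytic input is $w_\alpha\in L^2$, i.e.\ $\alpha<2\pi$.

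What your version buys:
\begin{itemize}
\item It is shorter and needs no Sobolev mapping estimate for a remainder. In particular it avoids controlling the regularity of $t\mapsto x_\alpha(t)^Mw_\alpha(t)e_M(x_\alpha(t),\cdot)$.
\item It works for arbitrary $L^2$ weights and any measurable reparametrisation $x_\alpha$.
\item It gives the trace formula $\tr B_1=\int k(x_\alpha(t),x_\alpha(t))w_\alpha(t)^2\,dt$ directly.
\end{itemize}

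What the paper's decomposition buys is structure. It isolates explicit singular finite-rank pieces built from $x_\alpha^jw_\alpha$, plus a remainder with quantified smoothing. This is the form reused in Step~3b of the proof of Lemma~\ref{l:traceLower} to estimate the regularised traces $\tr(T^\theta_{h,m}R_1)$. For mere membership in the trace class, that extra information is not needed.

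One presentational point: your opening paragraph announces a plan (products of Hilbert--Schmidt operators via Lemma~\ref{l:regularity}) that you never carry out, so it can be dropped.
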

        \begin{proof}
        We consider $B_1$, the other cases being similar.
            Taylor expanding $k$ at $(0,\tilde{s})$, we obtain
            $$
            k(t,s)= \sum_{j=0}^{M-1}t^ja_j(s)+ t^Me_M(t,s),
            $$
            where $a_j$ and $e_M$ are smooth. 
            Therefore, 
            $$
            B_1= \sum_{j=0}^{M-1}x_\alpha^jw_\alpha \otimes a_j(x_\Omega)w_\alpha + E_M,
            $$
            where 
            $$
            \|E_M\|_{L^2\to H^{(M+1)\frac{\pi}{\alpha}-\frac{1}{2}-0}}<\infty.
            $$
            Taking $M$ large enough that $(M+1)\frac{\pi}{\alpha}-\frac{1}{2}>1$ implies that $E_M$ is trace class and hence, since $E_M-B_1$ is finite rank, so is $B_1$. 
        \end{proof}

\subsection{The action of DtN maps on space of functions with $b$-regularity}

We first recall a direct consequence of~\cite[Proposition 3.9, Theorem 5.4]{BaNiZi:05} that allows us to understand the action of DtN maps on functions with $b$-regularity.
\begin{lemma}
\label{l:rough}
    Let $H\Subset \Omega_{\alpha}$ be open, possibly empty, with smooth, connected boundary, $\frac{1}{2}-\frac{\pi}{\alpha}<t<\frac{\pi}{\alpha}+\frac{1}{2}$. Then  
    \begin{equation} \label{e:rough}
    \| |s|^{m-t}\partial_s^mg\|_{L^2}<\infty\text{ for all }m\geq 0\quad \Rightarrow \quad 
    \||s|^{m-t+1}\partial_s^m\Lambda^{\Omega_{\alpha}\setminus H}1_{\partial\Omega_{\alpha}}g\|_{L^2}<\infty,\text{ for all }m\geq 0.
    \end{equation}
\end{lemma}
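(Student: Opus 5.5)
The plan is to derive \eqref{e:rough} from the theory of Kondratiev-type weighted Sobolev spaces on polygonal domains in~\cite{BaNiZi:05}. I would use the weighted trace and extension result \cite[Proposition 3.9]{BaNiZi:05} and the well-posedness of the Dirichlet problem \cite[Theorem 5.4]{BaNiZi:05}. Write $\rho$ for a smooth function on $\overline{\Omega_\alpha}$ that equals $|z|$ near the corner $0$ and is positive elsewhere. Define
$$
\mathcal{K}^m_a(U)=\{u\,:\,\rho^{|\beta|-a}\partial^\beta u\in L^2(U),\ |\beta|\leq m\},
$$
and the analogous boundary spaces $\mathcal{K}^m_a(\partial\Omega_\alpha)$, using $s$ (the arclength parameter from $\gamma_\alpha$) in place of $\rho$. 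The hypothesis in~\eqref{e:rough} says exactly that $g\in\mathcal{K}^\infty_t(\partial\Omega_\alpha)$. The desired conclusion says that $\Lambda^{\Omega_\alpha\setminus H}1_{\partial\Omega_\alpha} g\in \mathcal{K}^\infty_{t-1}(\partial\Omega_\alpha)$, i.e.\ the DtN map loses one order of regularity and one unit of weight.

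\emph{Step 1 (extension).} Following \cite[Proposition 3.9]{BaNiZi:05}, the trace map $\mathcal{K}^{m+1}_{a+1/2}(\Omega_\alpha)\to\mathcal{K}^{m+1/2}_{a}(\partial\Omega_\alpha)$ is surjective and has a bounded right inverse. I take $G\in\mathcal{K}^\infty_{t+1/2}(\Omega_\alpha)$ with $G|_{\partial\Omega_\alpha}=g$. I also multiply $G$ by a cutoff equal to $1$ near $\partial\Omega_\alpha$ and vanishing near $\overline H$, so that $G$ is supported away from $H$. Then $F:=\Delta G\in\mathcal{K}^\infty_{t-3/2}(\Omega_\alpha\setminus H)$.

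\emph{Step 2 (Dirichlet problem).} Next I solve $\Delta w=-F$ in $\Omega_\alpha\setminus H$ with $w=0$ on $\partial\Omega_\alpha\cup\partial H$. Near the corner the domain is just $\Omega_\alpha$. By \cite[Theorem 5.4]{BaNiZi:05},
$$
\Delta:\mathcal{K}^{m+1}_{a+1}\cap\{w|_{\partial}=0\}\to\mathcal{K}^{m-1}_{a-1}
$$
is an isomorphism provided $|a|<\pi/\alpha$, since the indicial roots of the Dirichlet Laplacian at a corner of angle $\alpha$ are $\pm k\pi/\alpha$. Here $a=t-\tfrac12$, so the hypothesis $\tfrac12-\frac\pi\alpha<t<\tfrac12+\frac\pi\alpha$ is exactly the admissible range.

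Away from the corner, and near the smooth connected curve $\partial H$, standard elliptic regularity gives smoothness. To glue the corner estimate with the rest of the domain, I would use a partition of unity and treat the commutator terms as compactly supported away from $0$. Uniqueness follows because there is no kernel in $H^1_0$. Consequently $u=G+w=\mathcal{H}^{\Omega_\alpha\setminus H}1_{\partial\Omega_\alpha}g$ lies in $\mathcal{K}^\infty_{t+1/2}$ near $\partial\Omega_\alpha$.

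\emph{Step 3 (normal derivative).} Since $\nabla$ maps $\mathcal{K}^{m+1}_{a}\to\mathcal{K}^m_{a-1}$, we get $\nabla u\in\mathcal{K}^\infty_{t-1/2}$. Applying the trace result again gives $\partial_\nu u|_{\partial\Omega_\alpha}\in\mathcal{K}^\infty_{t-1}(\partial\Omega_\alpha)$. Here $\nu$ is smooth on each side of the corner, and the corner is a product region in polar coordinates, so this step is routine. Written out, the conclusion is $\||s|^{m-t+1}\partial_s^m\Lambda g\|_{L^2}<\infty$ for every $m$.

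I expect the main obstacle to be the bookkeeping of weights and the regularity indices of the boundary spaces in Steps 1 and 3. One issue is the half-integer shifts between the interior weight $t+\tfrac12$ and the boundary weight $t$, and between the boundary orders $m+\tfrac12$ and the integer orders used in~\eqref{e:rough}; these have to be matched to the conventions of~\cite{BaNiZi:05}. The other is checking that the admissible range for $t$ corresponds precisely to the range avoiding the first indicial root $\pi/\alpha$. The presence of $H$ is harmless, because it only affects a region bounded away from the corner, where everything is smooth.
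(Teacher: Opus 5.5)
Your proposal is correct and follows the paper's route. The paper gives no argument beyond citing \cite[Proposition 3.9, Theorem 5.4]{BaNiZi:05}, and your three steps unpack that citation with the weights matched correctly: extending $g$ into $\mathcal{K}^\infty_{t+1/2}$, applying the Dirichlet isomorphism at weight $a=t-\frac12$ (which is exactly where the range $|t-\frac12|<\frac{\pi}{\alpha}$ comes from), and taking the weighted normal trace into $\mathcal{K}^\infty_{t-1}$. One small correction: for $t<\frac12$ the uniqueness you need is injectivity of $\Delta$ on $\mathcal{K}^{1}_{t+1/2}\cap\{w|_{\partial}=0\}$, a space strictly larger than $H^1_0$, and it is this, not uniqueness in $H_0^1$, that makes $G+w$ independent of the choice of $G$ and equal to the variational harmonic extension whenever $g\in H^{1/2}$.
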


A simple application of Lemma~\ref{l:rough} together with the characterisation of the domains of small powers of $\Lambda^{\Omega_{\alpha}\setminus H}$ in Corollary~\ref{c:domains} allows us to apply the following Lemma to DtN maps and hence to show that functions with $b$-regularity are in the domain of high powers of DtN maps.
\begin{lemma}
\label{l:highDomain}
Suppose that $\Lambda\geq 0$ is self-adjoint with $\Lambda:H^1\to L^2$ and satisfies~\eqref{e:rough} for $0\leq t<\frac{\pi}{\alpha}+\frac{1}{2}$. Then for all $0\leq t<\frac{\pi}{\alpha}+\frac{1}{2}$, $g$ satisfying
    $$
    \||s|^{m-t}\partial_s^mg\|_{L^2}<\infty,\qquad m\geq 0,
    $$
    we have $\Lambda^tg\in L^2$.
\end{lemma}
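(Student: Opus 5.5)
We argue by induction on the integer part of $t$, using the rough mapping property~\eqref{e:rough} to strip off one power of $\Lambda$ at a time. Write $t=k+\tau$ with $k\in\mathbb{Z}_{\geq 0}$ and $0\leq\tau<1$. For the base step, $0\leq t<1$, we need $g\in\mathcal{D}(\Lambda^{t})$. Since $\Lambda:H^1\to L^2$ is self-adjoint and nonnegative, we have $H^1\subset\mathcal{D}(\Lambda)$. Interpolation between $L^2=\mathcal{D}(\Lambda^0)$ and $\mathcal{D}(\Lambda)\supset H^1$ then gives $H^t\subset\mathcal{D}(\Lambda^t)$ for $0\leq t\leq 1$ (Heinz–Kato / complex interpolation). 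So it suffices to show that the $b$-condition $\||s|^{m-t}\partial_s^m g\|_{L^2}<\infty$ for all $m$ implies $g\in H^t$ when $t<1$. Away from $s=0$ the weights are harmless, so $g$ is smooth there. Near $0$ we decompose dyadically in $|s|\sim 2^{-\ell}$. Rescaling each piece to unit size, the $b$-bounds say that the rescaled pieces are uniformly bounded in $H^N$ after multiplication by $2^{\ell(t-1/2)}$. Summing the $H^t$ norms of the pieces gives $\sum_\ell 2^{-\ell\cdot 0}\cdot(\ell^2\text{-summable})$, which converges; a Littlewood–Paley/almost-orthogonality argument makes this precise. (This is essentially the standard embedding of the $b$-Sobolev space with weight $t$ into $H^{t}$ for $t<1$; compare Lemma~\ref{l:regularity}.) If the paper's Corollary~\ref{c:domains} directly characterizes $\mathcal{D}(\Lambda^t)$ for small $t$, we would simply invoke it here instead.

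For the inductive step, suppose $1\leq t<\frac{\pi}{\alpha}+\frac12$ and that the claim holds for $t-1$. First, $g$ satisfies the hypothesis with $t$, hence also with every smaller $t'$, since $|s|^{m-t'}\leq C|s|^{m-t}$ near $0$ on the compact $\mathbb{S}^1$. Next we apply~\eqref{e:rough} with this $t$, which is admissible because $0\leq t<\frac{\pi}{\alpha}+\frac12$. This gives $h:=\Lambda g$ with $\||s|^{m-(t-1)}\partial_s^m h\|_{L^2}<\infty$ for all $m$. Here we must make sure that $\Lambda g$ in the sense of~\eqref{e:rough} agrees with the operator-theoretic $\Lambda g$. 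Since $t\geq 1$, the base-step argument with $t=1$ gives $g\in H^1\subset\mathcal{D}(\Lambda)$, so the two coincide. Since $0\leq t-1<t$, the inductive hypothesis yields $h\in\mathcal{D}(\Lambda^{t-1})$. Then $g\in\mathcal{D}(\Lambda)$ together with $\Lambda g\in\mathcal{D}(\Lambda^{t-1})$ gives $g\in\mathcal{D}(\Lambda^{t})$ by the spectral theorem, since $\Lambda^{t}=\Lambda^{t-1}\Lambda$ on this set. That is, $\Lambda^t g\in L^2$.

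The main obstacle is the base step: the embedding of the weighted $b$-space into $H^t$ (equivalently into $\mathcal{D}(\Lambda^t)$) for $t<1$ near the corner. This is where the restriction $t<\frac{\pi}{\alpha}+\frac12$ is irrelevant but the condition $t<1$ (or, with more care, $t<\frac32$) matters. One must also handle the borderline case $t=1$ used in the identification step. If that case causes trouble, we would instead run the induction with steps landing in $[0,1)$, writing $t=k+\tau$ and showing $\Lambda^k g\in\mathcal{D}(\Lambda^\tau)$ by applying~\eqref{e:rough} $k$ times. Each intermediate function then has $b$-weight $t-j\geq\tau+1>1$, which gives membership in $H^{1}\subset\mathcal{D}(\Lambda)$ at every stage.
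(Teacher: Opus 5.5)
Your proposal is correct and follows the paper's argument. You peel off $\lfloor t\rfloor$ powers of $\Lambda$ using~\eqref{e:rough}, then handle the fractional part by interpolating to get $H^t\subset\mathcal{D}(\Lambda^t)$ and showing that the weighted $b$-condition forces $g\in H^t$ for $t<1$.

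The differences are minor. The paper proves that last embedding by splitting the Gagliardo seminorm directly into the region $|x-y|\le |y|/2$, controlled by $\||s|^{1-t}g'\|_{L^2}$, and the region $|x-y|>|y|/2$, controlled by $\||s|^{-t}g\|_{L^2}$; this is the concrete form of your dyadic almost-orthogonality step. There you should bound the square of the norm of the sum by the sum of the squared norms of the pieces, not sum the norms of an $\ell^2$ sequence. Your explicit checks that $g\in H^1\subset\mathcal{D}(\Lambda)$ and that $\mathcal{D}(\Lambda^t)=\{g\in\mathcal{D}(\Lambda):\Lambda g\in\mathcal{D}(\Lambda^{t-1})\}$ are bookkeeping the paper leaves implicit.
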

\begin{proof}
    Let $J:=\lfloor t\rfloor$. Then, by~\eqref{e:rough}
    $$
    \||s|^{m-t+J}\partial_s^m\Lambda^J1_{\partial\Omega_{\alpha}}g\|_{L^2}<\infty.
    $$
    Therefore, it is enough to show that if $0<t<1$ and for all $m\geq 0$,
    $$
    \||s|^{m-t}\partial_s^m g\|_{L^2}<\infty
    $$
    then, $\Lambda^tg\in L^2$. 
    Since $\Lambda:H^1\to L^2$, we have by interpolation that $\big(\Lambda^{\Omega_{\alpha}\setminus H}\big)^t:H^t\to L^2$.

We now show that $g\in H^t$ and hence $(\Lambda^{\Omega_{\alpha}\setminus H})^tg\in L^2$ as claimed.  

 Since $\psi g\in C^\infty$ for any $\psi\in C^\infty$ supported away from $0$, we may assume that $g$ is supported in a small neighborhood of $0$. Therefore, 
$$
    \|g\|_{H^t}^2=\|g\|_{L^2}+\int_{-\delta}^\delta\int_{-\delta}^\delta \frac{|g(x)-g(y)|^2}{|x-y|^{2t+1}}dxdy
    $$
We now estimate
    \begin{align*}
       &\iint \frac{|g(x)-g(y)|^2}{|x-y|^{2t+1}}dxdy\\
       &= \iint_{\substack{|x-y|\leq \frac{|y|}{2}}}\frac{|g(x)-g(y)|^2}{|x-y|^{2t+1}}dxdy+ \iint_{ \frac{|y|}{2}< |x-y|}\frac{|g(x)-g(y)|^2}{|x-y|^{2t+1}}dxdy\\
              &\leq  \iint_{|x-y|\leq \frac{|y|}{2}}\frac{\int_{0}^1 |g'(y+s(x-y))|^2ds}{|x-y|^{2t-1}}dxdy+ 2\iint_{ \frac{|y|}{2}< |x-y|}\frac{|g(x)|^2+|g(y)|^2}{|x-y|^{2t+1}}dxdy\\
    &\leq \int_{0}^{1}\iint_{|x-y|\leq \frac{|y|}{2}} \frac{ |g'(
    y+s(x-y)
    )|^2}{|x-y|^{2t-1}}dxdyds 
  +C\int |g(x)|^2|x|^{-2t}dx.
     %&\leq C\int_{0}^{1/2}\iint_{|(1+s)x-w)|\leq \frac{(1-s)|x|}{2}}\frac{ |g'(w)
    % |^2}{|(1+s)x-w|^{2t-1}}dxdwds +C\int_{1/2}^1\iint_{|w-sy|\leq \frac{s|y|}{2}}\frac{\int |g'(
    % w
    % )|^2}{|w-sy|^{2t-1}}dwdyds \\
    % &\qquad +C\int |g(x)|^2|x|^{-2t}dx\\
    % &\leq C\int_{0}^{1/2}\int \int_{c|x|}^{C|x|}\iint_{|(1+s)x-w)|\leq \frac{(1-s)|x|}{2}}\frac{ |g'(w)
    % |^2}{|(1+s)x-w|^{2t-1}}dwdxds +C\int_{1/2}^1\iint_{|w-sy|\leq \frac{s|y|}{2}}\frac{\int |g'(
    % w
    % )|^2}{|w-sy|^{2t-1}}dwdyds \\
    % &\qquad +C\int |g(x)|^2|x|^{-2t}dx,
    \end{align*}
    where the last line follows from the fact that $|x-y|\geq \frac{|y|}{2}$ implies $|x-y|\geq c|x|$.

    Now, for $0\leq s\leq 1$
    \begin{align*}
        \iint_{|x-y|\leq \frac{|y|}{2}} \frac{ |g'(
    y+s(x-y)
    )|^2}{|x-y|^{2t-1}}dxdy&= \iint_{|v|\leq \frac{|w-sv|}{2}} \frac{ |g'(
    w
    )|^2}{|v|^{2t-1}}dvdw\\
    &= \int \sgn(w)\int_{-\frac{w}{2-s}}^{\frac{w}{2+s}} \frac{ |g'(
    w
    )|^2}{|v|^{2t-1}}dvdw\\
     &\leq C \int  |g'(w)|^2 |w|^{2-2t}dw,
    \end{align*}
    and hence $g\in H^t$ as claimed.
\end{proof}

In our lower bound on the regularized trace of difference of DtN maps, we need a more precise formula for the action of DtN maps on one particular function.
\begin{lemma}
\label{l:exact}
Let $H\Subset \Omega_{\alpha}$ be open, possibly empty, with smooth, connected boundary.
    Then there is $\e>0$ such that for $\psi\in C_c^\infty(-\e,\e)$ with $\psi\equiv 1$ near $0$ and $q>\frac{1}{2}$, $q\alpha \notin \pi (2\mathbb{Z}+1)$. Then, for any $|t|<\frac{\pi}{\alpha}-\frac{1}{2}$ and $m\geq 0$,
    \begin{gather}
    \label{e:oneCorner}
    \||s|^{m-t}\partial_s^m(\Lambda^{\Omega_\alpha\setminus H} 1_{\partial\Omega_{\alpha}}\psi(|s|)|s|^{q} +q\psi(|s|)|s|^{q-1}\tan(q\alpha/2))\|_{L^2}<\infty,
    %\\
    %\||s|^{m-t}\partial_s^m(\Lambda^{\Omega_\alpha\setminus H} 1_{\partial\Omega_{\alpha}}\sgn(s)\psi(|s|)|s|^{q} -q\psi(|s|)\sgn(s)|s|^{q-1}\cot(q\alpha/2))\|_{L^2}<\infty.\label{e:twoCorner}
    \end{gather}
\end{lemma}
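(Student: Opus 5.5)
Near the corner, $\partial\Omega_\alpha$ consists of two curves meeting at angle $\alpha$, and we compare with the exact sector $S_\alpha=\{0<\arg z<\alpha\}$. For $q$ as in the statement, the function
$$
U_q(re^{i\theta}):=r^{q}\,\frac{\cos\big(q(\theta-\alpha/2)\big)}{\cos(q\alpha/2)}
$$
is harmonic in $S_\alpha$ and equals $r^q=|s|^q$ on both edges; the condition $q\alpha\notin\pi(2\mathbb{Z}+1)$ is exactly what makes $\cos(q\alpha/2)\neq0$. The outward normal derivative on the edge $\theta=0$ is $-r^{-1}\partial_\theta U_q=-q r^{q-1}\tan(q\alpha/2)$, and the same holds on $\theta=\alpha$ by symmetry. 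Hence, in the model, $\Lambda |s|^q=-q|s|^{q-1}\tan(q\alpha/2)$ exactly, which is the claimed leading term.

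The plan is to transfer this model computation to $\Omega_\alpha\setminus H$. Using the conformal map $f_\alpha$, the straight sector corresponds to the half-plane. Since $\gamma(s)=s$ near $0$, $\partial\Omega$ is flat near $0$, so $\partial\Omega_\alpha$ near the corner is exactly the two rays $\arg z=0,\alpha$. It is therefore not merely curvilinear but exactly a sector in a neighbourhood of $0$, and $\gamma_\alpha$ is arclength there. Choose $\e$ so small that $B(0,2\e)\cap\Omega_\alpha=B(0,2\e)\cap S_\alpha$, and let $\chi\in C_c^\infty(B(0,2\e))$ with $\chi\equiv1$ on $\supp\psi$. Set $V:=\chi\,\psi(|z|)U_q$, extended by zero, so $V\in H^1(\Omega_\alpha\setminus H)$ if $q>0$. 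On the boundary $V=\psi(|s|)|s|^q$ and $V=0$ near $\partial H$. Write
$$
\mathcal{H}^{\Omega_\alpha\setminus H}(\psi|s|^q)=V-W,
$$
where $W$ solves $\Delta W=\Delta V$ with zero Dirichlet data. Away from the corner $\Delta V=0$, and near $\partial\Omega_\alpha$ $\Delta V$ is smooth and supported where $\psi$ or $\chi$ is nonconstant. So $F:=\Delta V$ is smooth, compactly supported and vanishes near $0$ and near $\partial\Omega_\alpha\cap B(0,\e')$. Then $\partial_\nu V=-q\psi|s|^{q-1}\tan(q\alpha/2)+(\text{smooth, supported away from }0)$.

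It remains to show that $\partial_\nu W$ satisfies the weighted bound $\||s|^{m-t}\partial_s^m\partial_\nu W\|_{L^2}<\infty$ for $|t|<\frac{\pi}{\alpha}-\frac12$. This is the main step. $W$ is the Dirichlet solution with smooth source vanishing near the corner, so near $0$ it is harmonic with zero boundary values on the two rays. Separating variables, or conformally mapping by $f_\alpha^{-1}$ to a flat boundary where $W\circ f_\alpha$ is harmonic with zero data and hence smooth up to the boundary and odd-reflectable, gives
$$
W=\sum_{k\ge1}c_k r^{k\pi/\alpha}\sin(k\pi\theta/\alpha)
$$
convergent near $0$. Therefore $\partial_\nu W=\sum_k \pm c_k(k\pi/\alpha) r^{k\pi/\alpha-1}$ on each edge, i.e. a smooth function of $x_\alpha(s)$ times $w_\alpha(s)$. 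Each $\partial_s^m$ of it is $O(|s|^{\pi/\alpha-1-m})$, so $|s|^{m-t}\partial_s^m\partial_\nu W=O(|s|^{\pi/\alpha-1-t})$, which is in $L^2$ near $0$ exactly when $t<\frac{\pi}{\alpha}-\frac12$. Away from $0$, elliptic regularity makes everything smooth, and $H$ is handled since $W$ is smooth near $\partial H$ and we only restrict to $\partial\Omega_\alpha$.

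The lower restriction $t>-(\frac{\pi}{\alpha}-\frac12)$ is automatic since smaller $t$ only helps near $0$. The obstacle I expect is justifying the energy setting for $q$ close to $\frac12$, so that $V\in H^1$ and $W$ is the genuine variational solution consistent with the definition of $\Lambda$ on $H^{1/2}$. One also has to check that the remainder is smooth (not merely $H^1$) away from $0$, which follows from interior and boundary elliptic regularity on the smooth parts.
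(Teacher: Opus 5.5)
Your argument is correct. Its first half is the paper's: the same cut-off corrector (in the paper's rotated coordinates, $u_q=r^q\cos(q\theta)/\cos(q\alpha/2)$ on $|\theta|<\alpha/2$), and the same use of the exact straightness of $\partial\Omega_\alpha$ near $0$. As a result, $\mathcal{H}^{\Omega_\alpha\setminus H}1_{\partial\Omega_\alpha}g-\psi(r)u_q$ has zero Dirichlet data and source $[\Delta,\psi(r)]u_q$.

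The difference is how the remainder is controlled. The paper quotes the weighted (Kondratiev-type) regularity of \cite[Theorems 3.7, 5.4]{BaNiZi:05}. This gives $\sum_{|\beta|\le m}\|r^{|\beta|-t-1}\partial^\beta(\cdot)\|_{L^2}<\infty$ in the domain for $|t|<\frac{\pi}{\alpha}$, and the paper then restricts to the boundary, losing $\frac12$. You instead use that the source vanishes identically near the corner. So $W$ is there exactly Dirichlet-harmonic in a straight sector, and after flattening by $f_\alpha^{-1}$ and odd reflection (or separating variables and discarding the $r^{-k\pi/\alpha}$ modes by finite energy) it equals $\sum_k c_kr^{k\pi/\alpha}\sin(k\pi\theta/\alpha)$.

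Your route is more elementary and self-contained, and it gives more. The two edges combine to $\partial_\nu W=w_\alpha(s)G(x_\alpha(s))$ with $G$ real-analytic across $0$, the same structure as in Lemma~\ref{l:kernelForm}. The bounds for every $m$ and every $t<\frac{\pi}{\alpha}-\frac12$ follow immediately. The paper's route is shorter given the citation. It also stays within the weighted-space framework already needed for Lemma~\ref{l:rough}, where the data have only conormal regularity and no explicit expansion is available.

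Two routine points deserve a line each in a write-up:
\begin{enumerate}
\item The reflection needs $W\circ f_\alpha$ to have finite Dirichlet energy and zero trace on the whole diameter. This follows from conformal invariance of the Dirichlet integral and the zero capacity of a point.
\item The weak normal derivative $\Lambda^{\Omega_\alpha\setminus H}1_{\partial\Omega_\alpha}g\in H^{-1/2}$ agrees with your pointwise formula, with nothing concentrated at the corner. This holds because $|s|^{q-1}$ and $|s|^{\frac{\pi}{\alpha}-1}$ lie in $L^2$ (as $q>\frac12$ and $\alpha<2\pi$), and no nonzero distribution supported at a point lies in $H^{-1/2}$.
\end{enumerate}
The difficulty you anticipated for $q$ near $\frac12$ does not arise, since $V\in H^1$ for every $q>0$, as you already noted.
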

\begin{proof}
It will be convenient to rotate coordinates so that, with $\gamma:[-L/2,L/2)\to \partial\Omega_{\alpha}$, a unit speed parametrisation satisfying $\gamma(0)=0$, we have $\gamma'(0+)=e^{-i\alpha/2}$ and $\gamma'(0-)=-e^{i\alpha/2}$. 
Note that, by assumption there is $\e>0$ such that $\gamma(\pm s)=\pm s\gamma'(0\pm )$ on $0\leq s\leq \e$. 

Let $g:=\psi(|s|)|s|^{q}\in L^2(\partial\Omega_\alpha)$, and define
    $$
    u_q:=  \frac{r^{q}}{2\cos (q\alpha/2)}( e^{iq\theta}+e^{-iq\theta}).
    $$
    Then, 
    \begin{gather*}
    -\Delta (\mc{H}^{\Omega_\alpha\setminus H}1_{\partial\Omega_{\alpha}}g-\psi(r)u_q)= [\Delta,\psi(r)]u_q\text{ in }\Omega_{\alpha},\qquad (g-\psi u_q)|_{\partial(\Omega_{\alpha}\setminus H)}= 0
     \end{gather*}

    Therefore, by~\cite[Theorems 3.7, 5.4]{BaNiZi:05}, for $|t|<\frac{\pi}{\alpha}$ and any $m\geq 1$,
    $$
    \sum_{|\alpha|\leq m}\|r^{|\alpha|-t-1}\partial^{|\alpha|}(\mc{H}^{\Omega_\alpha\setminus H}1_{\partial\Omega_{\alpha}}g-\psi(r)u_q)\|_{L^2}\leq C_m
    $$
    Hence, for $|t|<\frac{\pi}{\alpha}-\frac{1}{2}$ and $m\geq 0$, we have
    $$
    \| |s|^{m-t}\partial_s^m(\Lambda^{\Omega_\alpha\setminus H}1_{\partial\Omega_{\alpha}}g+q\psi(|s|)|s|^{q-1}\tan (q\alpha/2))\|_{L^2}<\infty
    $$
    as claimed.
\end{proof}

As a corollary of Lemmas~\ref{l:exact} and ~\ref{l:rough}, and their analogs on $\Omega_{\alpha}\setminus H$, we obtain 
\begin{corollary}
\label{c:exact}
    Let $\psi\in C_c^\infty(-1,1)$ with $\psi\equiv 1$ near $0$, $\frac{1}{2}<q<\frac{\pi}{\alpha}$, and $m\in\mathbb{Z}$ with $0\leq m<q+\frac{1}{2}$.  Then, for all $H\Subset \Omega_{\alpha}$ with smooth boundary (possibly empty), and all $j$ and $t<\frac{\pi}{\alpha}+\frac{1}{2}$
    \begin{multline*}
    \Bigg\||s|^{j-t+m}\partial_s^j\bigg((\Lambda^{\Omega_\alpha\setminus H})^m1_{\partial\Omega_{\alpha}\setminus H}1_{\partial\Omega_{\alpha}}\psi(|s|)|s|^q\\-(-1)^m\psi(|s|)|s|^{q-m} \prod_{j=0}^{m-1} (q-j)\tan \big((q-j)\alpha/2\big) \bigg)\Bigg\|_{L^2}<\infty. 
    \end{multline*}
\end{corollary}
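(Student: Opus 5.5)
We argue by induction on $m$, applying Lemma~\ref{l:exact} once per power of the DtN map, with Lemma~\ref{l:rough} absorbing the remainders. Write $\Lambda:=\Lambda^{\Omega_\alpha\setminus H}1_{\partial\Omega_{\alpha}}$ and $c_m(q):=(-1)^m\prod_{j=0}^{m-1}(q-j)\tan((q-j)\alpha/2)$. The claim is that
$$
\Lambda^m\psi(|s|)|s|^q=c_m(q)\psi(|s|)|s|^{q-m}+R_m,
$$
where $R_m$ satisfies the weighted bounds $\||s|^{j-t+m}\partial_s^jR_m\|_{L^2}<\infty$ for all $j\geq 0$ and the admissible $t$. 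The case $m=0$ is trivial, with $R_0=0$.

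For the inductive step, we apply $\Lambda$ to both terms in the expansion of $\Lambda^{m}\psi(|s|)|s|^q$.

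\emph{Main term.} We use Lemma~\ref{l:exact} with $q$ replaced by $q-m$. This requires $q-m>\frac12$, which holds because $m<q+\frac12$ should really be read as ensuring $q-m+1>\frac12$ at the last step. It also requires $(q-m)\alpha\notin\pi(2\mathbb{Z}+1)$, which we assume, or which is excluded implicitly since $0<(q-m)\alpha<\pi$. Lemma~\ref{l:exact} then gives
$$
\Lambda\psi|s|^{q-m}=-(q-m)\tan((q-m)\alpha/2)\psi|s|^{q-m-1}+E,
$$
with $\||s|^{j-t'}\partial^j_sE\|_{L^2}<\infty$ for $|t'|<\frac{\pi}{\alpha}-\frac12$. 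Multiplying by $c_m(q)$ yields exactly the term $c_{m+1}(q)\psi|s|^{q-m-1}$.

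\emph{Remainder.} Apply Lemma~\ref{l:rough} with $t'=t-m$. The hypothesis $\||s|^{j-(t-m)}\partial_s^jR_m\|<\infty$ then gives $\||s|^{j-(t-m)+1}\partial_s^j\Lambda R_m\|<\infty$, which is the required bound for $R_{m+1}$ (the exponent $j-t+m+1$). The error $E$ must likewise be placed in the class with weight $|s|^{j-t+m+1}$. Since $E$ carries the weight $|s|^{j-t'}$ for any $t'$ up to nearly $\frac{\pi}{\alpha}-\frac12$, we choose $t'=t-m-1$ and check that it lies in range.

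\textbf{Main obstacle.} The difficulty is the bookkeeping of the allowed ranges of $t$. Lemma~\ref{l:rough} requires $\frac12-\frac{\pi}{\alpha}<t-m<\frac{\pi}{\alpha}+\frac12$, whereas Lemma~\ref{l:exact} only permits $|t|<\frac{\pi}{\alpha}-\frac12$. Moreover, the corollary claims the result for all $t<\frac{\pi}{\alpha}+\frac12$ without a lower bound. A lower bound is harmless here: decreasing $t$ only weakens a weighted estimate near $0$, because $|s|$ is bounded on the support. One reduces to $t$ close to the upper endpoint, where the shifts $t-k$ with $0\le k\le m$ must stay inside the intervals of both lemmas.

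Where the upper endpoint of Lemma~\ref{l:exact} is insufficient, I would first try to rederive it directly from the weighted estimates of~\cite{BaNiZi:05} with the wider range $|t|<\frac{\pi}{\alpha}$ for the harmonic extension. The trace loses $\frac12$, and this gives the range needed for the $\Lambda$-image with the extra weight shift of $1$. For the terms coming from $\partial H$ (the factor $1_{\partial\Omega_\alpha}$ after each application), the analogs on $\Omega_\alpha\setminus H$ apply: the $\partial H$ component is smooth by elliptic regularity and is smoothly mapped back to $\partial\Omega_\alpha$. It therefore contributes only smooth terms, which are in every weighted class.
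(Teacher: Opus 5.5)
Your induction is the argument the paper intends: it states the corollary as a direct consequence of Lemmas~\ref{l:exact} and~\ref{l:rough} and their analogs on $\Omega_\alpha\setminus H$. You apply Lemma~\ref{l:exact} to the main term $\psi(|s|)|s|^{q-k}$ and Lemma~\ref{l:rough} to the accumulated remainder, and your checks of $q-k>\frac12$ and $0<(q-k)\alpha<\pi$ are correct.

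The bookkeeping you single out as the main obstacle closes without returning to~\cite{BaNiZi:05}. Passing from $\Lambda^k$ to $\Lambda^{k+1}$, you need Lemma~\ref{l:exact} at $t'=t-k-1<\frac{\pi}{\alpha}-\frac12-k$ and Lemma~\ref{l:rough} at $t'=t-k<\frac{\pi}{\alpha}+\frac12$. Both lie below the admissible upper limits; the case $k=0$ uses Lemma~\ref{l:exact} exactly up to its endpoint. The lower limits are handled by the monotonicity you note. The relevant interval for Lemma~\ref{l:rough} is nonempty because $k<q-\frac12<\frac{\pi}{\alpha}$.

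One claim is wrong and should be fixed: the $\partial H\to\partial\Omega_\alpha$ block of $\Lambda^{\Omega_\alpha\setminus H}$ does not produce smooth functions on $\partial\Omega_\alpha$.
\begin{itemize}
\item Take a harmonic function with smooth data on $\partial H$ and zero data on $\partial\Omega_\alpha$. Near the exactly straight corner, in polar coordinates with edges at $\theta=0,\alpha$, it has the expansion $\sum_{n\ge1}a_nr^{n\pi/\alpha}\sin(n\pi\theta/\alpha)$.
\item Its normal derivative is therefore $w_\alpha(s)\tilde k(x_\alpha(s))$ with $\tilde k$ smooth. This is the same structure as the off-diagonal entries in Lemma~\ref{l:kernelForm2}.
\item Such a term is singular like $|s|^{\frac{\pi}{\alpha}-1}$. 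It lies only in the weighted classes with $t'<\frac{\pi}{\alpha}-\frac12$, not in all of them.
\end{itemize}
This still suffices. Since the initial datum vanishes on $\partial H$, these terms first appear in $\Lambda^{k+1}$ with $k+1\ge 2$. There the required weight $|s|^{j-t+k+1}$ corresponds to $t'=t-k-1<\frac{\pi}{\alpha}-\frac32$, which is within that class. Afterwards they are propagated by Lemma~\ref{l:rough} like the other remainders. The $\partial H$ components themselves remain smooth by interior elliptic regularity.
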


\begin{remark}
    It is possible to extend the range of $q$'s to which Lemma~\ref{l:exact} and Corollary~\ref{c:exact} apply, but to do so requires  one to solve the Dirichlet problem in spaces worse than $H^{\frac{1}{2}}$ and the use of the distributional normal derivative. To avoid these technicalities we restrict ourselves to the case $q>\frac{1}{2}$ used in this paper.
\end{remark}

\subsection{A lower bound on the regularized trace}

Next, we prove the key technical estimate used to prove Theorems~\ref{t:lowerBound} and~\ref{t:gap}. It shows that certain regularized traces of kernels of the form~\eqref{e:kernelForm} can be controlled from below. To state it, we define for $m\geq 0$,
        $$
         \psi_{m}(x):=\frac{x^m}{1+x^m}.
        $$
       \begin{lemma}
       \label{l:traceLower}
    Let $\alpha\in (0,2\pi)\setminus \mc{E}$, $L> 0$, and $m\in \mathbb{R}$ with $\frac{2\pi}{\alpha}-1<m<\frac{2\pi}{\alpha}$. Let $H_i\Subset \Omega_{\alpha}$, $i=1,2$  be open with smooth, connected boundary of length $L$. 
Then $\mathscr{D}(H_1,H_0)$ is trace class and there is $c>0$ such that if $\mathscr{D}(H_1,H_0)$ has kernel of the form~\eqref{e:kernelForm} with $K_{\Omega\Omega}(0,0)>0$, then
        \begin{gather}
        \inf_{\theta\in[0,1]}\tr\big(T^{\theta}_{h,m}\mathscr{D}(H_1,H_0))\geq c K_{\Omega\Omega}(0,0)h^{\frac{2\pi}{\alpha}-1}+o(h^{\frac{2\pi}{\alpha}-1}),\label{e:abstractTrace}\\
        T_{h,m}^\theta:=\psi_{m}(h\Lambda_\theta),\qquad \Lambda_{\theta}:=I+\theta\Lambda^{\Omega_{\alpha}\setminus H_1}+(1-\theta)\Lambda^{\Omega_{\alpha}\setminus H_0}.\nonumber
                \end{gather}
    \end{lemma}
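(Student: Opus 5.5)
Trace class of $\mathscr{D}(H_1,H_0)$ follows from the form~\eqref{e:kernelForm} given by Lemma~\ref{l:kernelForm2}. The $HH$ block is smoothing. The off-diagonal blocks and the $\Omega\Omega$ block are of the types $B_3$, $B_2$, $B_1$ in Lemma~\ref{l:traceClass}. For the lower bound, I will follow the Taylor-expansion idea of Lemma~\ref{l:traceClass} and split
$$\mathscr{D}(H_1,H_0)=K_{\Omega\Omega}(0,0)\,\chi w_\alpha\otimes \chi w_\alpha+R,$$
where $\chi$ is a cutoff near $0$ on $\partial\Omega_\alpha$. The remainder $R$ is of the same structure, but its $\Omega\Omega$ kernel vanishes at $(0,0)$. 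Hence $R$ is a sum of three kinds of pieces: rank-one-type pieces $x_\alpha^j w_\alpha\otimes a w_\alpha$ with $j\ge1$, pieces with an extra factor $w_\alpha$ only on one side, and smooth or smoothing pieces. Since $0\le T^\theta_{h,m}\le I$, the trace splits accordingly. The main term is $K_{\Omega\Omega}(0,0)\langle T^\theta_{h,m}\chi w_\alpha,\chi w_\alpha\rangle$. Each remainder contributes $\langle T^\theta_{h,m} g_1,g_2\rangle$, where $g_1,g_2$ are functions like $|s|^{q}$ near $0$ with larger exponents, or are smooth.

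The key step is the main term. Write $g=\chi |s|^{q}$ with $q=\frac{\pi}{\alpha}-1$, and use
$$\langle \psi_m(h\Lambda_\theta)g,g\rangle=\|g\|^2-\langle (1+(h\Lambda_\theta)^m)^{-1}g,g\rangle.$$
The quantity $h^{2q+1}=h^{\frac{2\pi}{\alpha}-1}$ is the scaling of $\|g\|^2_{L^2(|s|<h)}$. I would first try a spectral-measure argument. Let $\mu_\theta$ be the spectral measure of $g$ for $\Lambda_\theta$. Then
$$\langle\psi_m(h\Lambda_\theta)g,g\rangle=\int\frac{(h\lambda)^m}{1+(h\lambda)^m}d\mu_\theta(\lambda)\ge \tfrac12\mu_\theta([h^{-1},\infty)).$$
So it suffices to show $\mu_\theta([\lambda,\infty))\ge c\lambda^{-(2q+1)}$ uniformly in $\theta$, for large $\lambda$.

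To get this tail lower bound I use the two facts that hold because $g$ is exactly a model corner function. First, since $g\in H^{r}$ only for $r<q+\frac12$, and $\Lambda_\theta$ is comparable to $\langle D\rangle$, we have $\int\lambda^{2r}d\mu_\theta=\infty$ at $r=q+\frac12$. This gives only a weak statement, so I refine it using Corollary~\ref{c:exact} and Lemma~\ref{l:highDomain}. These show that $\Lambda_\theta^k g$ is, for integer $k<q+\frac12$, equal to an explicit nonzero multiple $c_k(-1)^k|s|^{q-k}$ plus a more regular error. The multiple is nonzero because $\alpha\notin\mc{E}$ makes the tangent factors finite and nonzero; this is where the exceptional angles are excluded. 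From this I obtain an upper bound for $\langle(1+(h\Lambda_\theta)^m)^{-1}g,g\rangle$: I test against $g$ and compare with the exact model $|s|^{q}$ on the infinite sector, where the DtN map acts diagonally by Mellin transform and the quadratic form is computed explicitly. Precisely, $\langle(1+(h\Lambda)^m)^{-1}g,g\rangle$ equals $\|g\|^2-c\,h^{2q+1}+o(h^{2q+1})$ with $c>0$. The condition $m>2q+1=\frac{2\pi}{\alpha}-1$ makes the $\lambda$-integral $\int \frac{(h\lambda)^m}{1+(h\lambda)^m}\lambda^{-2q-2}\,d\lambda$ converge at $0$, and $m<\frac{2\pi}{\alpha}$ keeps $(h\Lambda)^m g$ within the range controlled by Corollary~\ref{c:exact} for the error terms. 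Uniformity in $\theta$ comes from the fact that $\Lambda_\theta$ near the corner differs from the model only by terms of the smoothing or $b$-improved type of Lemma~\ref{l:rough}, with constants uniform in $\theta\in[0,1]$.

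The remainder terms are then shown to be $o(h^{2q+1})$. A piece with a factor $|s|^{q'}$, $q'>q$, on both sides gives $O(h^{2q'+1})$ by the same scaling. A cross term mixing $|s|^{q}$ with $|s|^{q'}$ gives $O(h^{q+q'+1})$, by Cauchy–Schwarz applied to the positive operator $T^\theta_{h,m}$. The smooth pieces give $O(h^{m})$, and $m>2q+1$. The main obstacle is the model computation with uniform error: identifying the exact constant $c>0$ via the Mellin-diagonalized sector DtN map, with symbol involving $\tan$, and controlling the commutator and corner-perturbation errors using only the weighted estimates of Lemmas~\ref{l:rough} and~\ref{l:exact}.
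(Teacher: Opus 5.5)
\textbf{There is a genuine gap in the principal term.} Your trace-class argument and your splitting of $\mathscr{D}(H_1,H_0)$ into $K_{\Omega\Omega}(0,0)\,w_\alpha\otimes w_\alpha$ plus Taylor remainders match the paper (its $B_0$ and $R_1,R_2,R_3$). Your scaling heuristic $\int\psi_m(h\lambda)\lambda^{-2q-2}\,d\lambda=h^{2q+1}\int_0^\infty\psi_m(x)x^{-2q-2}\,dx$ correctly explains the exponent and the role of $m>\frac{2\pi}{\alpha}-1$. But the bound $\langle\psi_m(h\Lambda_\theta)g,g\rangle\geq ch^{2q+1}$, uniformly in $\theta$, is asserted rather than derived, and the route you sketch does not give it.
\begin{enumerate}
\item The Mellin transform does not diagonalize the sector DtN map. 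On even data $\Lambda_{\mathrm{sec}}|s|^{z}=-z\tan(z\alpha/2)|s|^{z-1}$, so only $|s|\Lambda_{\mathrm{sec}}$ is a Mellin multiplier, and $(1+(h\Lambda_{\mathrm{sec}})^m)^{-1}$ has no explicit Mellin form.
\item More seriously, your model constant $c$ is a generalized spectral density of the non-$L^2$ homogeneous function $|s|^{q}$, and nothing in your argument makes it positive. In fact it must vanish for $\alpha\in\mc{E}$: for $\alpha=\pi$ one has $q=0$ and $\langle\psi_m(h|D|)\chi,\chi\rangle=O(h^m)=o(h)$. You place the use of $\alpha\notin\mc{E}$ in the tangent factors of Corollary~\ref{c:exact}. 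However, for integer $k<q+\frac12$ and $j<k$ one has $q-j>\frac12$ and $(q-j)\frac{\alpha}{2}=\frac{\pi}{2}-\frac{(j+1)\alpha}{2}\in(0,\frac{\pi}{2})$. So these factors are finite and nonzero for \emph{every} $\alpha$, exceptional or not, and the actual source of positivity is missing.
\item You do not address transferring a model computation to the functional calculus of $\Lambda_\theta$ for the non-polynomial function $(1+x^m)^{-1}$, with $o(h^{2q+1})$ errors uniform in $\theta$.
\item Your tail argument via $\int\lambda^{2r}\,d\mu_\theta=\infty$ would give at most a $\limsup$ statement. It also needs $\Lambda_\theta^{r}\simeq(1+|D|)^{r}$ at $r=\frac{\pi}{\alpha}-\frac12$, whereas Corollary~\ref{c:domains} only provides this for $r<\min(\frac{\pi}{\alpha}+\frac12,\frac32)$, which excludes $\alpha\leq\pi/2$.
\end{enumerate}

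\textbf{What the paper does instead.} It uses no model. It proves $c(1+|D|)^t\leq\Lambda_\theta^t\leq C(1+|D|)^t$ in the form sense, uniformly in $\theta$, for $0\leq t<\min(\frac{2\pi}{\alpha}+1,3)$. By the order-reversing property of $A\mapsto A^{-1}$, this bounds $Q_{h,t}$ above and below by explicit Fourier-multiplier forms $\langle h^t(1+|D|)^t(1+c h^t(1+|D|)^t)^{-1}v,v\rangle$. Since $m$ exceeds this range for small angles, it writes $Q_{h,m}(g)=\langle(1+(h\Lambda_\theta)^m)^{-1}h^m\Lambda_\theta^{m-2J}\Lambda_\theta^Jg,\Lambda_\theta^Jg\rangle$ with $J=\lceil\frac{\pi}{\alpha}-\frac32\rceil$, so that $0<m-2J<3$. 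It then bounds this below on $\{\e h\Lambda_\theta\leq1\}$ by $c_\e h^{2J}Q_{h,m-2J}(\Lambda_\theta^Jg)$, up to a tail handled by~\eqref{e:quadraticCut}. Corollary~\ref{c:exact} replaces $\Lambda_\theta^Jg$ by $C_J\psi(|s|)|s|^{q-J}$ plus a more regular error. The Fourier coefficients of $\psi(|s|)|s|^{q-J}$ behave like a constant multiple of $\widehat{|\cdot|^{q-J}}(1)\,|n|^{-(q-J)-1}$. Here $\widehat{|\cdot|^{q-J}}(1)\neq0$ exactly because $q-J\in(-\frac12,\frac12]$ is nonzero, i.e.\ $\frac{\pi}{\alpha}\notin\mathbb{Z}$; for $\alpha=\pi/n$ one gets $q-J=0$. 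That is the idea your proposal lacks.

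A smaller point: the infinite-rank Taylor remainder is not of the form $\langle T^\theta_{h,m}g_1,g_2\rangle$. The paper bounds it in trace norm using cyclicity and the mapping properties of $\Lambda_\theta^{m_i'}$ and $\Lambda_\theta^{J_i}$.
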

\begin{proof}

The fact that $\mathscr{D}(H_1,H_0)$ is trace class follows from Lemma~\ref{l:traceClass}. To reduce the notation below, we write $\Lambda_i:=1+\Lambda^{\Omega_{\alpha}\setminus H_i}$, $i=1,2$.

\noindent Step 1: We claim that for $0\leq t<\min(\frac{\pi}{\alpha}+\frac{1}{2},\frac{3}{2})$,  
\begin{equation} \label{e:goodDomains}
\sup_{\theta\in[0,1]}\|\Lambda_\theta^t\|_{H^{t}\to L^2}+\|\Lambda_\theta^{-t}\|_{ L^2\to H^{t}}<\infty.
\end{equation}

First notice that, since by Corollary~\ref{c:domains} $\Lambda_1^{2t}:H^{t}\to H^{-t}$ and $\Lambda_1\geq 0$, we have
$$
\|\Lambda_1^{t}u\|_{L^2}^2=\langle \Lambda_1^{2t} u,u\rangle_{L^2}\leq C\|u\|_{H^{t}}^2.
$$
Hence, $\Lambda_1^t:H^{t}\to L^2$
and, since $\Lambda_1^{2t}:H^{t}\to H^{-t}$ is invertible,
$$
\|u\|_{H^{t}}\leq C\|\Lambda_1^{2t}u\|_{H^{-t}}\leq C\|\Lambda_1^t\|_{L^2\to H^{-t}}\|\Lambda_1^tu\|_{L^2}\leq C\|\Lambda_1^tu\|_{L^2}.
$$
In particular, $\Lambda_1^t:H^{t}\to L^2$ is invertible.

After applying the same argument to $\Lambda_\theta$, it remains to show that 
\begin{equation} \label{e:boundTheta}
\sup_{\theta\in[0,1]}\|\Lambda_\theta^{2t}\|_{H^{t}\to H^{-t}}+\|\Lambda_\theta^{-2t}\|_{H^{-t}\to H^{t}}<\infty.
\end{equation}
For this, observe that by Lemma~\ref{l:bounded}, for $-t+1\leq s\leq t$, 
$$
\|\Lambda_\theta\|_{H^s\to H^{s-1}}\leq C(\|\Lambda_1\|_{H^s\to H^{s-1}}+\|\Lambda_2\|_{H^s\to H^{s-1}})\leq C. 
$$

First, notice that since $\Lambda_i\geq 0$, we have for all $\theta\in[0,1]$, $\Lambda_{\theta}^{-1}:H^{-\frac{1}{2}}\to H^{\frac{1}{2}}$. 
Hence,
$$
\Lambda_\theta^{-1}= \Lambda_1^{-1}+\Lambda_1^{-1}(1-\theta)(\Lambda_1-\Lambda_2)\Lambda_\theta^{-1}.
$$

Therefore, for $\frac{1}{2}\leq s\leq t$, since $(\Lambda_2-\Lambda_1):H^{\frac{1}{2}}\to H^{s-1}$, 
$$
\|\Lambda_\theta^{-1}\|_{H^{s-1}\to H^s}\leq C.
$$
In particular, dualtiy, interpolation and composition imply~\eqref{e:boundTheta}.
~

\noindent Step 2: An intermediate quadratic form. 

Let $0\leq t<\min(\frac{2\pi}{\alpha}+1,3)$
$$
Q_{h,t}(u):=\langle h^t\Lambda_\theta^t(I+h^t\Lambda_\theta^t)^{-1}u,u\rangle. 
$$
Then, since 
$$
\langle (1+ch^t(1+|D|)^t)u,u\rangle \leq \|u\|_{L^2}^2+ch^t\|u\|_{H^{t/2}}^2\leq \langle (1+h^t\Lambda_\theta^t)u,u\rangle, 
$$
we have $0\leq (1+h^t\Lambda_\theta^t)^{-1}\leq (1+ch^t|D|^t)^{-1}$ and hence
\begin{equation} \label{e:quadraticLower}
\begin{aligned}
Q_{h,t}(v)&=\|v\|_{L^2}^2-\langle (1+h^t\Lambda_\theta^t)^{-1}v,v\rangle \\
&\geq \|v\|_{L^2}^2-\langle (1+ch^t(1+|D|)^t)^{-1}v,v\rangle \\
&\geq c\langle h^t(1+|D|)^t(1+ch^t(1+|D|)^t)^{-1}v,v\rangle.
\end{aligned}
\end{equation}
Similarly, since for 
$$
0\leq \langle (1+h^t\Lambda_\theta^t)u,u\rangle \leq \|u\|_{L^2}^2+Ch^t\|u\|_{H^{t/2}}^2\leq \langle (1+Ch^t(1+|D|)^t)u,u\rangle, 
$$
we have
\begin{equation} \label{e:quadraticUpper}
\begin{aligned}
Q_{h,t}(v)\leq  C\langle h^t(1+|D|)^t(1+Ch^t(1+|D|)^t)^{-1}v,v\rangle.
\end{aligned}
\end{equation}

We also claim that
\begin{equation}\label{e:quadraticCut}
Q_{h,t}(1_{h\e \Lambda_\theta\geq1 }v)\leq 2 Q_{h\e ,t}(v).
\end{equation}
Indeed, this follows from the fact the functional calculus and the fact that 
$$
\frac{x^t}{1+x^t}1_{ x\geq \e^{-1}}\leq 2\frac{\e ^t x^t}{1+\e^tx^t}.
$$

         \noindent Step 3: A lower bound for the weighted trace
         
         The goal of this step is to prove~\eqref{e:abstractTrace}, which follows from the fact that
        \begin{equation} \label{e:finalTrace}
        \operatorname{tr}(T^\theta_{h,m}\mathscr{D}(H_1,H_0))\geq  cK_{\Omega\Omega}(0,0) h^{\frac{2\pi}{\alpha}-1}+o(h^{\frac{2\pi}{\alpha}-1}).
        \end{equation}

        Let $B_0$ be the operator with kernel
        $$
        K_{B_0}=\begin{pmatrix} 0&0\\0&K_{\Omega\Omega}(0,0)w_\alpha(s)w_{\alpha}(t)\end{pmatrix}.
        $$
        We claim that
        \begin{equation} \label{e:traceEstimates}
        \tr(T^\theta_{h,m}B_0)\geq  cK_{\Omega\Omega}(0,0)h^{\frac{2\pi}{\alpha}-1}+o(h^{\frac{2\pi}{\alpha}-1}),\qquad |\tr(T_{h,m}^\theta(\mathscr{D}(H_1,H_0)-B_0)|=o (h^{\frac{2\pi}{\alpha}-1})
        \end{equation}
        
Step 3a: the principal term

We start by proving the first part of~\eqref{e:traceEstimates}. Observe that 
        $$
        \tr (T^\theta_{h,m}B_0)=K_{\Omega\Omega}(0,0) \langle T^\theta_{h,m} w_{\alpha},w_{\alpha}\rangle 
        $$

        Now, since $w_{\alpha}-c\psi(s)|s|^{\frac{\pi}{\alpha}-1}=:r\in C_c^\infty(\mathbb{S}^1\setminus \{0\})$, 
        $$
        \langle T^\theta_{h,m} w_{\alpha},w_{\alpha}\rangle \geq cQ_{h,m}\big( \psi(|s|)|s|^{\frac{\pi}{\alpha}-1}\big)  - h^m\|\Lambda_\theta^{m/2}r\|_{L^2}^2. 
        $$
        Now, by Lemmas~\ref{l:highDomain} and~\ref{l:rough}, $\Lambda_\theta^{m/2}r\in L^2$.  
        Therefore, we need only show that 
        $$
        Q_{h,m}\big( \psi(s)|s|^{\frac{\pi}{\alpha}-1}\big) \geq ch^{\frac{2\pi}{\alpha}-1}. 
        $$

        For this, let $J:=\lceil \frac{\pi}{\alpha}-\frac{3}{2}\rceil$. Then, by Lemma~\ref{l:exact} and~\eqref{e:quadraticCut},
\begin{align*}
Q_{h,m}\big(\psi(|s|)|s|^{\frac{\pi}{\alpha}-1}\big)&= \langle (1+h^m\Lambda_\theta^m)^{-1}h^{m}\Lambda_\theta^{m-2J} \Lambda_\theta^{J}\psi(|s|)|s|^{\frac{\pi}{\alpha}-1},\Lambda_\theta^{J}\psi(|s|)|s|^{\frac{\pi}{\alpha}-1}\rangle\\
&\geq c_\e h^{2J} Q_{h,m-2J}(1_{\e h \Lambda_\theta\leq 1}\Lambda_\theta^{J}\psi(|s|)|s|^{\frac{\pi}{\alpha}-1})\\
&\geq c_\e h^{2J}\big(Q_{h,m-2J}(\Lambda_\theta^{J}\psi(|s|)|s|^{\frac{\pi}{\alpha}-1})-2Q_{h\e,m-2J}(\Lambda_\theta^{J}\psi(|s|)|s|^{\frac{\pi}{\alpha}-1})\big)\\
&= c_{\e,J} h^{2J}\big(Q_{h,m-2J}(C_J\psi(|s|)|s|^{\frac{\pi}{\alpha}-1-J}+g)-2Q_{h\e,m-2J}(C_J\psi(|s|)|s|^{\frac{\pi}{\alpha}-1-J}+g)\big)\\
&\geq  c_{\e} h^{2J}\big(Q_{h,m-2J}(C_J\psi(|s|)|s|^{\frac{\pi}{\alpha}-1-J})-CQ_{h\e,m-2J}(C_J\psi(|s|)|s|^{\frac{\pi}{\alpha}-1-J})\big)\\
&\qquad -Ch^{2J}(Q_{h,m-2J}(g)+Q_{h\e,m-2J}(g)),
\end{align*}
where for all $n\geq 0$ and $t<\frac{\pi}{\alpha}+\frac{1}{2}$, 
$
|s|^{n-t+J}\partial_s^ng\in L^2$ and 
$$
C_J:=(-1)^J\prod_{j=0}^{J-1}\Big(\frac{\pi}{\alpha}-j-1\Big)\tan \Big((\frac{\pi}{\alpha}-j-1)\frac{\alpha}{2}\Big)\neq 0.
$$
Where the fact that $C_J\neq 0$ follows from $J<\frac{\pi}{\alpha}$ and $0<(\frac{\pi}{\alpha}-j-1)\frac{\alpha}{2}<\pi$. 

Now, by Lemma~\ref{l:highDomain}, since $0\leq \frac{m-2J}{2}= \frac{m}{2}-\lceil\frac{\pi}{\alpha}-\frac{3}{2}\rceil <\frac{\pi}{\alpha}+\frac{1}{2}$, we have
$$
Q_{h,m-2J}(g)\leq Ch^{m-2J}
$$

Now, to finish the first step, observe that 
        \begin{align*}
        &|\xi|^{\frac{\pi}{\alpha}-J}\widehat{(\psi(|\cdot|)|\cdot|^{\frac{\pi}{\alpha}-1-J})}(\pm \xi)\\
        &= \int e^{\mp is} \psi(|\xi|^{-1}|s|)|s|^{\frac{\pi}{\alpha}-1-J}ds\\
        &=\int e^{ is}\psi(s)|s|^{\frac{\pi}{\alpha}-1-J}ds +\int e^{ is} (- D_s)^N \big(\psi(|\xi|^{-1}|s|)-\psi(|s|))|s|^{\frac{\pi}{\alpha}-1-J}\big)ds\\
        &\to \int e^{ is}\psi(s)|s|^{\frac{\pi}{\alpha}-1-J}ds+\int e^{is}( D_s)^N\Big((1-\psi(|s|))|s|^{\frac{\pi}{\alpha}-1-J}\Big)ds= \widehat{|\cdot|^{\frac{\pi}{\alpha}-1-J}}(1)\neq 0,
        \end{align*}
        where the last inequality follows from the fact that $\frac{\pi}{\alpha}\notin \mathbb{Z}$. 
        In particular, 
        \begin{align*}
        &Q_{h,m-2J}(\psi(|s|)|s|^{\frac{\pi}{\alpha}-1-J})\\
        &\simeq \langle h^{m-2J}(I+|D|)^{m-2J}(I+h^{m-2J}(I+|D|)^{m-2J})^{-1}\psi(|s|)|s|^{\frac{\pi}{\alpha}-1-J},\psi(|s|)|s|^{\frac{\pi}{\alpha}-1-J}\rangle \\
        &=\sum |\widehat{\psi(|\cdot|)|\cdot|^{\frac{\pi}{\alpha}-1-J}}(n)|^2 h^{m-2J}(1+|n|)^{m-2J}/(1+h^{m-2J}(1+|n|)^{m-2J})\\
        &= \sum_{|n|\geq M} |n|^{-\frac{2\pi}{\alpha}+2J} h^{m-2J}(1+|n|)^{m-2J}/(1+h^{m-2J}(1+|n|)^{m-2J}) +O(h^{m-2J})\\
        &\simeq h^{\frac{2\pi}{\alpha}-1-2J}
        \end{align*}
        since $m>\frac{2\pi}{\alpha}-1$. This implies the first estimate in~\eqref{e:traceEstimates}. 

Step 3b: the error term

        To obtain the second estimate, we consider operators $R_1$, $R_2,$ $R_3$ with kernels of the form
    \begin{align*}
       K_{R_1}(s,t)&=w_\alpha(t) w_{\alpha}(s)x_\alpha(t)k_1(x_\alpha(t),x_{\alpha}(s))\\
       K_{R_2}(s,t)&=w_\alpha(s) k_2(x_\alpha(t),x_{\alpha}(s)),\\
       K_{R_3}(s,t)&=w_\alpha(t) k_3(x_\alpha(t),x_{\alpha}(s)),\\
        \end{align*}
        with $k_i$ smooth.

        We consider $R_1$, the others being similar.
        To analyze $R_1$, we Taylor expand the kernel to obtain for any $M$
        \begin{align*}
        K_{R_1}(t,s)&=w_\alpha(t)w_{\alpha}(s)x_\alpha(t)\Big(\sum_{j=0,\ell}^{M-1}c_{j,\ell}x_\alpha(t)^jx_\alpha(s)^\ell+\sum_{j=0}^{M-1}x_{\alpha}(s)^Mx_{\alpha}(t)^ja_{j,M}(x_\alpha(s)) \\
        &\qquad +x_\alpha(t)^M\sum_{\ell=0}^{M-1}x_\alpha(s)^\ell b_{\ell,M}
        (x_\alpha(t)) +x_\alpha(s)^M x_\alpha(t)^Me_M(x_\alpha(t),x_\alpha(s))\Big),
        \end{align*}
        with $a_j$, $b_\ell$, and $e$ smooth.

        Hence, for any $M\geq 1$,  
        \begin{multline*}
        R_1= \sum_{j,\ell=0}^{M-1}c_{j,\ell}\Big(\sgn(t)^{j+1}\psi(|t|)|t|^{\frac{(2+j)\pi}{\alpha}-1}\Big)\otimes\Big( \psi(|s|)\sgn(s)^\ell|s|^{\frac{(1+\ell)\pi}{\alpha}-1}\Big)\\
        +\sum_{j=0}^{M-1} \Big(\sgn(t)^{j+1}\psi(|t|)|t|^{\frac{(2+j)\pi}{\alpha}-1}\Big)\otimes g_{j,M} +\sum_{\ell=0}^{M-1}\tilde{g}_{\ell,M}\otimes \Big(\sgn(s)^\ell\psi(|s|)|s|^{\frac{(1+\ell)\pi}{\alpha}-1}\Big)+ R_{1,M}'
        \end{multline*}
        where, with $K_{R_{1,M}'}$, we have for all $n, n'$, that
        \begin{gather*}
        |t|^{-\frac{(M+1)\pi}{\alpha}+\frac{1}{2}+0+n}|s|^{-\frac{(M+1)\pi}{\alpha}+\frac{1}{2}+0+n'}\partial_t^n\partial_s^{n'}K_{R_{1,M}'}\in L^2,
        \\
          |s|^{-\frac{(M+1)\pi}{\alpha}+\frac{1}{2}+0+n}\partial_s^ng_{j,M}\in L^2,\quad |s|^{-\frac{(M+2)\pi}{\alpha}+\frac{1}{2}+0+n}\partial_s^n\tilde{g}_{\ell,M} \in L^2.
         \end{gather*}
        The same analysis as in Step 2, using Lemma~\ref{l:highDomain} then shows that
        $$
        |\tr (T_{h,m}^\theta R_1)|=o(h^{\frac{2\pi}{\alpha}-1})+|\tr (T_{h,m}^\theta R_{1,M}')|
        $$

Now, let $s>1$,   $m_1',m_2'<\min(\frac{\pi}{\alpha}+\frac{1}{2}, \frac{3}{2})$ and $J_1+J_2+m_1'+m_2'\leq m$. Then, using cyclicity of the trace and boundedness of $\Lambda_\theta^{m_i'}:L^2\to H^{-m_i'}$ and $\Lambda_\theta^{m_i'}:H^{m_i'}\to L^2$, we have
\begin{align*}
&|\tr (T_{h,m}^\theta R_{1,M}')|\\
&=|\tr ( (1+(h\Lambda_\theta)^m)^{-(m-J_2-m_2')/m}(h\Lambda_\theta)^{m-m_2'-J_2} R_{1,M}'(h\Lambda_\theta)^{m_2'+J_2}(1+(h\Lambda_\theta)^m)^{-(m_2'+J_2)/m})|\\
&\leq h^{m'+J_1+J_2}\|\Lambda_{\theta}^{m_1'}\Lambda_\theta^{J_1}R_{1,M'}\Lambda_{\theta}^{J_2}\Lambda_{\theta}^{m_2'}\|_{\tr}\\
&\leq h^{m_1'+m_2'+J_1+J_2}\|(1+|D|)^{m_1'}\Lambda_\theta^{J_1}R_{1,M'}\Lambda_{\theta}^{J_2}(1+|D|)^{m_2'}\|_{\tr}\\
&\leq h^{m_1'+m_2'+J_1+J_2}\|(1+|D|)^{m_1'+s}\Lambda_\theta^{J_1}R_{1,M'}\Lambda_{\theta}^{J_2}(1+|D|)^{m_2'}\|_{L^2\to L^2}
\end{align*}
Now, by Lemma~\ref{l:highDomain}, for $m_1'+s<\frac{3}{2}$, $J_1+m_1'+s<\frac{\pi}{\alpha}+\frac{1}{2}$, and $J_2+m_2'<\frac{\pi}{\alpha}+\frac{1}{2}$, we have 
$$
\|(1+|D|)^{m_1'+s}\Lambda_\theta^{J_1}R_{1,M'}\Lambda_{\theta}^{J_2}(1+|D|)^{m_2'}\|_{L^2\to L^2}\leq C,
$$
and hence
$$
|\tr (T_{h,m}^\theta R_{1,M}')|=o(h^{\frac{2\pi}{\alpha}-1}),
$$
which proves the second estimate in~\eqref{e:traceEstimates} and hence the lemma.

\end{proof}

An almost identical proof yields
       \begin{lemma}
       \label{l:traceLower2}
    Let $\alpha\in (0,2\pi)\setminus \mc{E}$ and $m\in \mathbb{R}$ with $\frac{2\pi}{\alpha}-1<m<\frac{2\pi}{\alpha}$. Let $H\Subset \Omega_{\alpha}$  be open with smooth, connected boundary.
Then, $\mathscr{D}^{\flat}(\Omega_{\alpha},H)$ is trace class and there is $c>0$ such that if  $\mathscr{D}^{\flat}(\Omega_{\alpha},H)$ has kernel of the form~\eqref{e:kernelForm1} with $k(0,0)\geq 0$, then
        \begin{gather*}
        \tr\big(T_{h,m}\mathscr{D}^{\flat}(\Omega_{\alpha},H))\geq  c k(0,0)h^{\frac{2\pi}{\alpha}-1}+o(h^{\frac{2\pi}{\alpha}-1}),\\
        T_{h,m}:=\psi_{m}(h(1+\Lambda^{\Omega_{\alpha}})).\nonumber
                \end{gather*}
    \end{lemma}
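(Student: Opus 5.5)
We follow the proof of Lemma~\ref{l:traceLower}, with $\Lambda_\theta$ replaced by the single operator $\Lambda:=1+\Lambda^{\Omega_{\alpha}}$. This corresponds to taking $H_1=H_0=\emptyset$, which Lemma~\ref{l:exact}, Corollary~\ref{c:exact} and Lemma~\ref{l:rough} explicitly allow. The operator $\mathscr{D}(H_1,H_0)$ is replaced by $\mathscr{D}^{\flat}(\Omega_\alpha,H)$, which acts only on the $\partial\Omega_\alpha$ component.

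\emph{Trace class property.} By Lemma~\ref{l:kernelForm}, the kernel of $\mathscr{D}^{\flat}(\Omega_\alpha,H)$ is $w_\alpha(t)k(x_\alpha(t),x_\alpha(s))w_\alpha(s)$ with $k$ smooth. This is exactly an operator of type $B_1$ in Lemma~\ref{l:traceClass}, so $\mathscr{D}^{\flat}(\Omega_\alpha,H)$ is trace class.

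\emph{Step 1.} We prove \eqref{e:goodDomains} for $\Lambda$, namely $\Lambda^t:H^t\to L^2$ and $\Lambda^{-t}:L^2\to H^t$ for $0\le t<\min(\frac{\pi}{\alpha}+\frac12,\frac32)$. This follows from Corollary~\ref{c:domains} by the positivity and duality argument already given. It is easier here than before, because there is no $\theta$ and no resolvent perturbation identity is needed.

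\emph{Step 2.} We keep the quadratic form $Q_{h,t}(u)=\langle h^t\Lambda^t(1+h^t\Lambda^t)^{-1}u,u\rangle$ and its comparisons \eqref{e:quadraticLower}, \eqref{e:quadraticUpper} and \eqref{e:quadraticCut}. These rely only on functional calculus and on the two-sided comparison $\langle\Lambda^t u,u\rangle\simeq\|u\|^2_{H^{t/2}}$.

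\emph{Step 3.} We split $\mathscr{D}^{\flat}(\Omega_\alpha,H)=B_0+(\mathscr{D}^{\flat}(\Omega_\alpha,H)-B_0)$, where $B_0$ has kernel $k(0,0)w_\alpha(t)w_\alpha(s)$.

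For the principal term we have $\tr(T_{h,m}B_0)=k(0,0)\langle T_{h,m}w_\alpha,w_\alpha\rangle$. Since $k(0,0)\ge0$, a lower bound on $\langle T_{h,m}w_\alpha,w_\alpha\rangle$ suffices, and if $k(0,0)=0$ the principal term vanishes.
\begin{itemize}
\item Write $w_\alpha=c\psi(|s|)|s|^{\frac{\pi}{\alpha}-1}+r$ with $r$ smooth away from $0$. The remainder $r$ contributes $O(h^m)=o(h^{\frac{2\pi}{\alpha}-1})$ by Lemmas~\ref{l:highDomain} and~\ref{l:rough}.
\item Set $J=\lceil\frac{\pi}{\alpha}-\frac32\rceil$. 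We peel off $\Lambda^J$ using Corollary~\ref{c:exact} with $q=\frac{\pi}{\alpha}-1$, which produces the constant $C_J\neq0$ because $\alpha\notin\mc{E}$. The error $g$ contributes $O(h^{m})$.
\item After this, the Fourier computation shows that $\widehat{|\cdot|^{\frac{\pi}{\alpha}-1-J}}$ decays exactly like $|n|^{-\frac{\pi}{\alpha}+J}$, again because $\frac{\pi}{\alpha}\notin\mathbb{Z}$. Combined with $m>\frac{2\pi}{\alpha}-1$, this gives $Q_{h,m}\gtrsim h^{\frac{2\pi}{\alpha}-1}$.
\item The $\e$-cutoff term from \eqref{e:quadraticCut} is absorbed by choosing $\e$ small. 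It produces $\e^{m-\frac{2\pi}{\alpha}+1}$ times the same order, and this factor is small since $m>\frac{2\pi}{\alpha}-1$.
\end{itemize}

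For the error term, $\mathscr{D}^{\flat}(\Omega_\alpha,H)-B_0$ has kernel $w_\alpha(t)w_\alpha(s)\big(x_\alpha(t)k_1+x_\alpha(s)k_2\big)$ with $k_1,k_2$ smooth, that is, $R_1$-type operators together with their adjoints.
\begin{itemize}
\item Taylor expand as in Step~3b. The finite-rank pieces are tensors of functions with one factor vanishing to order at least $\frac{2\pi}{\alpha}-1$. Each such piece contributes at most $h^{\frac{\pi}{\alpha}-\frac12}\cdot h^{\frac{3\pi}{\alpha}-\frac12-0}$ after using Lemma~\ref{l:highDomain} on each factor, which is $o(h^{\frac{2\pi}{\alpha}-1})$.
\item For the remainder $R'_{1,M}$, we distribute powers $(h\Lambda)^{m_1'+J_1}$ and $(h\Lambda)^{m_2'+J_2}$ on either side using cyclicity of the trace. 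We then bound the trace norm by $\|(1+|D|)^{m_1'+s}\cdots\|_{L^2\to L^2}$ with $s>1$, using the $b$-regularity of the kernel together with Lemma~\ref{l:highDomain}.
\end{itemize}

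\emph{Main obstacle.} The hardest point is this last bookkeeping. We must check that exponents $m_1',m_2',J_1,J_2$ exist with sum exceeding $\frac{2\pi}{\alpha}-1$, at most $m$, and respecting both the constraints $m_i'<\min(\frac{\pi}{\alpha}+\frac12,\frac32)$ and $J_i+m_i'<\frac{\pi}{\alpha}+\frac12$. This holds because the two sides each admit just under $\frac{\pi}{\alpha}+\frac12$, and we may take $M$ large. Everything else transfers verbatim from Lemma~\ref{l:traceLower}, so the proof should be presented as an indication of these modifications.
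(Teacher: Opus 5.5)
Your proposal is correct and is exactly the paper's route: the paper simply states that the proof of Lemma~\ref{l:traceLower} carries over, with $\Lambda_\theta$ replaced by $1+\Lambda^{\Omega_\alpha}$ and only the $\partial\Omega_\alpha$ block present (hence only $R_1$-type error terms and their adjoints). A few of your exponents are off, though harmlessly:
\begin{itemize}
\item The $\e$-cutoff term carries the factor $\e^{\frac{2\pi}{\alpha}-1-2J}$, not $\e^{m-\frac{2\pi}{\alpha}+1}$. It is small because $2J<\frac{2\pi}{\alpha}-1$ by the choice of $J$.
\item The finite-rank pieces are only $O(h^{\frac{\pi}{\alpha}-\frac12+\frac m2})$. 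Each factor is bounded through $Q_{h,m}$, and $Q_{h,m}(f)\gtrsim h^m\|f\|^2$ for every $f$, so the better factor gives at most $h^{m/2}$. This is still $o(h^{\frac{2\pi}{\alpha}-1})$ because $m>\frac{2\pi}{\alpha}-1$.
\item In the remainder estimate, the side carrying $(1+|D|)^{s}$ with $s>1$ and $m_1'+s<\frac32$ does not admit $J_1+m_1'$ up to $\frac{\pi}{\alpha}+\frac12$. In the worst case it admits only slightly more than $\frac{\pi}{\alpha}-1$. The total can nevertheless exceed $\frac{2\pi}{\alpha}-\frac12>\frac{2\pi}{\alpha}-1$, so the bookkeeping closes.
\end{itemize}
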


\section{Proofs of the main theorems}

Before proving our main theorem, we prove a basic estimate on $\sigma_j$. This estimate follows easily from the Weyl law~\cite{Ro:23,KaLaPo:23}, but we choose to present a self-contained proof.
\begin{lemma}\label{l:basic}
    Let $\mc{P}$ be a curvilinear polygon. Then there is $c>0$ such that 
    $$
    cj\leq \sigma_j\leq C^{-1}j.
    $$
\end{lemma}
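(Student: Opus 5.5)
The plan is to compare the Steklov quadratic form with the $H^{1/2}$ norm on $\partial\mc{P}$ and then use min-max against the Laplacian $|D|$ on the circle, whose eigenvalues are $|n|$.

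The first step is a two-sided form comparison. Parametrize $\partial\mc{P}$ by arclength, identifying it with a circle of length $L$. For $f\in H^{1/2}(\partial\mc{P})$ we have
$$
\langle \Lambda^{\mc{P}}f,f\rangle=\int_{\mc{P}}|\nabla \mc{H}^{\mc{P}}f|^2dx=\min\Big\{\int_{\mc{P}}|\nabla v|^2\,:\,v\in H^1(\mc{P}),\ v|_{\partial\mc{P}}=f\Big\}.
$$
Since $\mc{P}$ is a bounded Lipschitz domain, the trace map $H^1(\mc{P})\to H^{1/2}(\partial\mc{P})$ is bounded and has a bounded right inverse $E$. This gives
$$
c\|f\|_{H^{1/2}}^2-C\|f\|_{L^2}^2\leq \langle(\Lambda^{\mc{P}}+1)f,f\rangle\leq C\|f\|_{H^{1/2}}^2.
$$
The upper bound comes from using $Ef$ as a competitor in the minimization. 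For the lower bound, we need $\|f\|_{H^{1/2}}^2\leq C(\|\nabla \mc{H}f\|_{L^2}^2+\|f\|_{L^2}^2)$. This follows from trace continuity together with $\|\mc{H}f\|_{L^2(\mc{P})}\leq C(\|\nabla\mc{H}f\|_{L^2}+\|f\|_{L^2(\partial\mc{P})})$, which is a Poincaré-type inequality proved by the usual compactness/contradiction argument. In fact only the weaker form $\langle(\Lambda^{\mc{P}}+1)f,f\rangle\geq c\|f\|^2_{H^{1/2}}$ is needed.

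The second step is to pass to eigenvalue counting. The norm $\|f\|_{H^{1/2}}^2$ is equivalent to $\langle (1+|D|)f,f\rangle$ on $L^2(\mathbb{R}/L\mathbb{Z})$, because arclength parametrization is bi-Lipschitz, and in fact it is the identity here. Min-max comparison of quadratic forms on $L^2(\partial\mc{P})$ then gives
$$
c\,\lambda_j(1+|D|)\leq \sigma_j+1\leq C\,\lambda_j(1+|D|),
$$
where the eigenvalues are counted with multiplicity. The eigenvalues of $1+|D|$ are $1+2\pi|n|/L$, each value appearing twice for $n\neq 0$, so $\lambda_j(1+|D|)\simeq 1+j$. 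Hence $c(1+j)-1\leq\sigma_j\leq C(1+j)$.

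For $j\geq1$ we have $\sigma_j>0$, since $\mc{P}$ is connected and so $\ker\Lambda$ consists only of the constants. Combining this with the previous bound for large $j$ and adjusting constants over the finitely many remaining small $j$ yields $cj\leq\sigma_j\leq Cj$.

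The only step needing real care is the lower form bound, namely the $H^{1/2}$ coercivity modulo $L^2$ on a Lipschitz domain with corners. It follows from the trace theorem applied to $\mc{H}f$ together with the Poincaré argument above, so I do not expect a genuine obstacle. The rest of the argument is a routine min-max comparison.
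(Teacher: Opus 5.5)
Your proposal is correct and is essentially the paper's argument. The paper also rests on the two-sided equivalence of $\langle(1+\Lambda^{\mc{P}})g,g\rangle$ with $\|g\|_{H^{1/2}(\partial\mc{P})}^2$, proving the coercive half as you do via the trace theorem and the Poincar\'e-type inequality \eqref{e:poincare} (Step 1 of Lemma~\ref{l:invertible}); it then compares with $1+|D|$ through the singular-value inequality $s_j(AB)\le\|A\|\,s_j(B)$ applied to $(1+\Lambda^{\mc{P}})^{-1/2}$ and $(1+|D|)^{-1/2}$, which is just the operator form of your min-max comparison of quadratic forms (one wording slip: the bound $\langle(1+\Lambda^{\mc{P}})f,f\rangle\ge c\|f\|_{H^{1/2}}^2$ you actually prove is the stronger, not weaker, version of your displayed lower bound, though either suffices).
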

\begin{proof}
    First, notice that $\sigma_0=0$ and $\sigma_1>0$. Therefore, it is enough to prove that 
    $$
    c(j+1)\leq 1+\sigma_j\leq C(j+1). 
    $$

For a compact operator $A$, let $s_j(A)$ denote the singular values of $A$.  By Lemma~\ref{l:invertible} $(1+\Lambda^{\mc{P}})^{1/2}:L^2\to H^{-1/2}$ is invertible and hence, using~\cite[Proposition B.15]{ZwScat}, we have
\begin{align*}
    (1+cj)^{-1/2}&\leq s_j( (1+|D|)^{-1/2}))\\
    &=s_j((1+|D|)^{-1/2}(1+\Lambda^{\mc{P}})^{1/2}(1+\Lambda^{\mc{P}})^{-1/2})\\
    &\leq Cs_j((1+\Lambda^{\mc{P}})^{-1/2})=C(1+\sigma_j)^{-1/2}
    \end{align*}
    Similarly,
    \begin{align*}(1+\sigma_j)^{-1/2}&=s_j((1+\Lambda^{\mc{P}})^{-1/2})\\
    &=s_j((1+\Lambda^{\mc{P}})^{-1/2}(1+|D|)^{1/2}(1+|D|)^{-1/2})\\
    &\leq Cs_j((1+|D|)^{-1/2})\leq C(1+cj)^{-1/2}.
    \end{align*}
\end{proof}

We can now prove the main theorems. 
\begin{proof}[Proof of Theorem~\ref{t:lowerBound}]
Shrinking $r$ if necessary, we may assume that $B(x_0,r)\Subset \Omega_{\alpha}$. We apply Lemma~\ref{l:basicKernel} to see that, in Lemma~\ref{l:traceLower2} with $H=B(x_0,r/2)$, $k(0,0)>0$ and hence $\frac{2\pi}{\alpha}-1<m<\frac{2\pi}{\alpha}$
$$
\tr T_{h,m}\mathscr{D}^{\flat}(\Omega_{\alpha},H) \geq ch^{\frac{2\pi}{\alpha} -1}. 
$$
Now, let $\{\phi_{\sigma_j}\}_{j=0}^\infty\subset L^2(\partial\Omega_{\alpha})$ be an orthonormal basis of eigenfunctions for $\Lambda^{\Omega_{\alpha}}$ with eigenvalue $\sigma_j$ and $u_{\sigma_j}$ the corresponding Harmonic extensions. Then, let $\chi \in C_c^\infty(B(x_0,r))$ with $\supp (1-\chi)\cap B(x_0,r/2)=\emptyset$. Then, by Lemma~\ref{l:differenceDescribe}, 
\begin{align*}
ch^{\frac{2\pi}{\alpha}-1}\leq \tr T_{h,m}\mathscr{D}^{\flat}(\Omega_{\alpha},H)&=\sum_j \langle  T_{h,m}\mathscr{D}^{\flat}(\Omega_{\alpha},H))\phi_j,\phi_j\rangle \\
&=\sum_j \frac{h^m(1+\sigma_j)^m}{1+h^m(1+\sigma_j)^m}\langle  \mathscr{D}^{\flat}(\Omega_{\alpha},H)\phi_j,\phi_j\rangle\\
&\leq \sum_j \frac{h^m(1+\sigma_j)^m}{1+h^m(1+\sigma_j)^m}\int_{\Omega_{\alpha}}|\nabla \chi u_{\sigma_j}|^2dx\\
&\leq \sum_j C\frac{h^m(1+\sigma_j)^m}{1+h^m(1+\sigma_j)^m}\|u_{\sigma_j}\|_{L^2(B(x_0,r))}^2,
\end{align*}
where we use that $u_{\sigma_j}$ is harmonic in the last line. 

Suppose now that $\|u_{\sigma_j}\|_{L^2(B(x_0,r))}=o(j^{-\frac{\pi}{\alpha}})$. Since by Lemma~\ref{l:basic} there is $c>0$ such that $cj\leq \sigma_j\leq Cj$, for any $\e>0$, we have
$$
ch^{\frac{2\pi}{\alpha}-1}\leq \sum_j C\frac{h^m(1+j)^m}{1+h^m(1+j)^m}o(j^{-\frac{2\pi}{\alpha}})\leq C_\e h^m  +C\e \sum_j\frac{h^m(1+j)^m}{1+h^m(1+j)^m}j^{-\frac{2\pi}{\alpha}}\leq Ch^m +C\e h^{\frac{2\pi}{\alpha}-1},
$$
which is a contradiction for $\e>0$ and $h$ small enough.
\end{proof}

     \begin{proof}[Proof of Theorem~\ref{t:gap}]
 
         Let $x_0\in \Omega_{\alpha}$ with $d(x_0,\partial\Omega_{\alpha})>r_0$ and $y_\e\in\Omega_{\alpha}$ $|y_\e|> r_0$ and 
         $$
         \e< d(y_0,\partial\Omega_{\alpha})<2\e. 
         $$
         
         Let $H_1=B(x_0,\e)$ and $H_0=B(y_0,\e)$. Then, by Lemmas~\ref{l:closeToBoundary},~\ref{l:kernelLower} (with $p>4$), and~\ref{l:kernelForm2}, for $\e$ small enough, $\mathscr{D}(H_1,H_0)$ has a kernel of the form~\eqref{e:kernelForm} with $K_{\Omega\Omega}(0,0)>0$. Lemma~\ref{l:traceLower} then implies that  for $\frac{2\pi}{\alpha}-1<m<\frac{2\pi}{\alpha}$
\begin{equation} \label{e:lowBoundToUse}
\inf_{\theta\in[0,1]}\tr T_{h,m}^\theta \mathscr{D}(H_1,H_0) \geq ch^{\frac{2\pi}{\alpha} -1}. 
\end{equation}

Define
$$
\Phi_{m}(x):=\int_0^x \psi_{m}(t)dt.
$$
To prove Theorem~\ref{t:gap}, we first show that 
\begin{equation} \label{e:toProveTrace}
``\tr''(\Phi_m(h\Lambda_1)-\Phi_m(h\Lambda_0))\geq \int_0^1\tr T_{h,m}^\theta h\mathscr{D}(H_1,H_0)d\theta \geq ch^{\frac{2\pi}{\alpha}},
\end{equation}
where $``\tr''$ is an appropriately regularized trace, $\Lambda_1:=1+\Lambda^{\Omega_{\alpha}\setminus H_1}$, and $\Lambda_0:=1+\Lambda^{\Omega_{\alpha}\setminus H_0}$.

To dot his, let $q_n$ by polynomials with $q_n\to \Phi_m$ in $C^1$ uniformly on compact sets. Let $\{e_j\}_{j=0}^\infty\subset L^2(\mathbb{S}^1)\times  L^2(\mathbb{S}^1)$ be an orthonormal basis of eigenvectors of $\Lambda_1$ with eigenvalues $1+\sigma_j(\Lambda^{\Omega_\alpha\setminus H_1})$. Let $\Pi_M$ denote the orthogonal projector onto $\{e_j\}_{j=0}^M$. Then, by cyclicity of the trace
 \begin{align*}
 \tr  \big( \Pi_M\big( q_n(\Pi_Mh\Lambda_1\Pi_M)-q_n(\Pi_Mh\Lambda_0\Pi_M)\big)\Pi_M\big)&=\int_0^1 \tr( q_n'(\Pi_M h\Lambda_\theta\Pi_M)\Pi_M h\mathscr{D}(H_1,H_0)\Pi_M )d\theta\\
 &\underset{n\to \infty}{\longrightarrow} \int_0^1 \tr  \psi_m(\Pi_M h\Lambda_\theta\Pi_M )\Pi_M h \mathscr{D}(H_1,H_0)\Pi_Md\theta.
 \end{align*}
 
 On the other hand 
 $$
 \tr \Pi_Mq_n(\Pi_Mh\Lambda_j\Pi_M)\Pi_M\to \tr \Pi_M\Phi_m(\Pi_M h\Lambda_j \Pi_M)\Pi_M,
 $$
 so
 $$
 \tr \big(\Pi_M\big(\Phi_m(\Pi_M h\Lambda_1 \Pi_M)-\Phi_m(\Pi_M h\Lambda_0\Pi_M)\big)\Pi_M\big)=\int_0^1\tr \psi_m(\Pi_Mh\Lambda_\theta\Pi_M)\Pi_M h\mathscr{D}(H_1,H_0)\Pi_M d\theta.
 $$

 Since $\mathscr{D}(H_1,H_0)$ is trace class, 
 \begin{equation}
 \label{e:trace1}
 \| \Pi_M h\mathscr{D}(H_1,H_0)\Pi_M- h\mathscr{D}(H_1,H_0)\|_{\tr}\underset{M\to \infty}{\longrightarrow} 0.
 \end{equation}

Next, observe that
\begin{equation}
\label{e:trace2}
\psi_m(\Pi_Mh\Lambda_\theta\Pi_M)=I -(I+(\Pi_M h\Lambda_\theta\Pi_M)^m)^{-1}.
\end{equation}
Then, since $h\Lambda_\theta \geq h$, letting $\Gamma= \{\frac{h}{2}-it+m^2|t|\,:\, t\in\mathbb{R}\}$ be a contour around $[h,\infty)$, we have 
\begin{align*}
&(I+(\Pi_M h\Lambda_\theta\Pi_M)^m)^{-1}-(I+(h\Lambda_\theta)^m)^{-1}\\
&= \frac{h}{2\pi i}\int_{\Gamma} \frac{1}{1+z^m} (h\Lambda_\theta-z)^{-1}(\Pi_M\Lambda_\theta \Pi_M-\Lambda_\theta)(\Pi_Mh\Lambda_\theta \Pi_M -z)^{-1}dz+(I-\Pi_M)\\
&= -\frac{h}{2\pi i}\int_{\Gamma} \frac{1}{1+z^m} 
(h\Lambda_\theta-z)^{-1}(I-\Pi_M)\Lambda_1(I-\Pi_M)(\Pi_Mh\Lambda_\theta \Pi_M -z)^{-1}dz \\
&\qquad -(1-\theta)\frac{h}{2\pi i}\int_{\Gamma} \frac{1}{1+z^m} 
(h\Lambda_\theta-z)^{-1} (\Pi_M\mathscr{D}(H_1,H_0)\Pi_M-\mathscr{D}(H_1,H_0))(\Pi_Mh\Lambda_\theta \Pi_M -z)^{-1}dz\\
&\qquad +(I-\Pi_M)\\
&= -\frac{h}{2\pi i}\int_{\Gamma} \frac{1}{1+z^m} (h\Lambda_\theta-z)^{-1}(h\Lambda_1-z)(h\Lambda_1-z)^{-1}(I-\Pi_M)\Lambda_1(I-\Pi_M)(\Pi_Mh\Lambda_\theta \Pi_M -z)^{-1}dz\\
&\qquad -\frac{h}{2\pi i}(1-\theta)  \int_{\Gamma} \frac{1}{1+z^m} (h\Lambda_\theta-z)^{-1}(\Pi_M\mathscr{D}(H_1,H_0)\Pi_M-\mathscr{D}(H_1,H_0))(\Pi_Mh\Lambda_\theta \Pi_M -z)^{-1}dz\\
&\qquad +(I-\Pi_M)\\
&=\frac{1}{2\pi i} \int_{\Gamma} \frac{1}{1+z^m} (h\Lambda_\theta-z)^{-1}(h\Lambda_1-z)(I+z(h\Lambda_1-z)^{-1})z^{-1}(I-\Pi_M)dz+(I-\Pi_M)\\
&\qquad -\frac{h}{2\pi i}(1-\theta)  \int_{\Gamma} \frac{1}{1+z^m} (h\Lambda_\theta-z)^{-1}(\Pi_M\mathscr{D}(H_1,H_0)\Pi_M-\mathscr{D}(H_1,H_0))(\Pi_Mh\Lambda_\theta \Pi_M -z)^{-1}dz\\
&=:A(I-\Pi_M)+B_M.
\end{align*}
$B_M$ converges to zero since $\mathscr{D}(H_1,H_0)$ is trace class, $m>0$, and the resolvents are bounded by $d(z,[h,\infty))^{-1}$. On the other hand $A$ is bounded since $(h\Lambda_\theta-z)^{-1}(h\Lambda_1-z)$ is bounded and $(h\Lambda-z)^{-1}\leq d(z,[h,\infty))^{-1}$. 

In particular, since $\|(I-\Pi_M)\mathscr{D}(H_1,H_0)\|_{\tr}\to 0$, we have
\begin{equation} \label{e:trace3}
\|\big((I+(\Pi_M h\Lambda_\theta\Pi_M)^m)^{-1}-(I+(h\Lambda_\theta)^m)^{-1}\big)\mathscr{D}(H_1,H_0)\|_{\tr}\underset{M\to \infty}{\longrightarrow} 0
\end{equation}
and hence by~\eqref{e:lowBoundToUse}, \eqref{e:trace1}, \eqref{e:trace2}, and~\eqref{e:trace3}, we have
\begin{equation}
\label{e:toContradict}
\lim_{M\to \infty} \tr\big(\Phi_m(\Pi_Mh\Lambda_1\Pi_M)-\Phi_m(\Pi_Mh\Lambda_0\Pi_M)\big)=\int_0^1\tr T_{h,m}^\theta h\mathscr{D}(H_1,H_0)d\theta\geq ch^{\frac{2\pi}{\alpha}}.
\end{equation}

Now, observe that, since $\Phi_m$ is increasing and 
$$
\sigma_j(h\Lambda_0)=\min_{\dim V=j+1}\max_{v\in V,\, \|v\|=1} \langle h\Lambda_0v,v\rangle ,
$$
we have 
$$
\sum_{j=0}^M\Phi_m(\sigma_j(h\Lambda_0))\leq \tr\big(\Phi_m(\sigma_j(\Pi_M h\Lambda_0\Pi_M))\big),
$$
and hence
\begin{multline}\label{e:trace4}
ch^{\frac{2\pi}{\alpha}}\leq \lim_{M\to \infty} \tr\big(\Phi_m(\Pi_Mh\Lambda_1\Pi_M)-\Phi_m(\Pi_Mh\Lambda_0\Pi_M)\big)\\
\leq \liminf_{M\to \infty}\sum_{j=0}^M \Phi_m(h(1+\sigma_j(\Lambda^{\Omega_\alpha\setminus H_1})))-\Phi_m(h(1+\sigma_j(\Lambda^{\Omega_\alpha\setminus H_0}))).
\end{multline}

Now that we have proved~\eqref{e:toProveTrace} in the sense of~\eqref{e:trace4} suppose by contradiction that 
$$
|\sigma_j(\Lambda^{\Omega_{\alpha}\setminus  H_1})-\sigma_j(\Lambda^{\Omega_{\alpha}\setminus H_0})|=o(j^{-\frac{2\pi}{\alpha}}). 
$$
Then, since by Lemma~\ref{l:basic} $cj\leq \sigma_j(\Lambda^{\Omega_\alpha\setminus H_1})\leq Cj$ and $cj\leq \sigma_j(\Lambda^{\Omega_\alpha\setminus H_0})\leq Cj$, for all $\e>0$ we have
\begin{align*}
ch^{\frac{2\pi}{\alpha}}&\leq \liminf_{M\to\infty} \sum_{j=0}^M\Phi(h(1+\sigma_j(\Lambda^{\Omega_{\alpha}\setminus H_1})))-\Phi(h(1+\sigma_j(\Lambda^{\Omega_{\alpha}\setminus H_0})) \\
&\leq h\liminf_{M\to \infty}\sum_{j=1}^M \sup_{x\in h[1+cj,1+Cj]}|\psi_m(x)||\sigma_j(\Lambda^{\Omega_{\alpha}\setminus H_1})-\sigma_j(\Lambda^{\Omega_{\alpha}\setminus H_0})|\\
&\leq Ch\sum_{j}\min ( h^m(1+j)^m, 1)o(j^{-\frac{2\pi}{\alpha}})\\
&=C_\e h^{m+1}+C\e h \sum_j \min (h^m(1+j)^m,1)j^{-\frac{2\pi}{\alpha}}\leq C_\e h^{m+1}+C\e h^{\frac{2\pi}{\alpha}}.
\end{align*}
Taking $\e$ small enough and then $h$ small enough, we obtain a contradiction with~\eqref{e:trace4}. 
    \end{proof}

\appendix 

    \section{Domains of the DtN map on Curvilinear polygons}
In this section, we study the DtN map on curvilinear polygons. In particular, we show that they satisfy the assumptions of Lemma~\ref{l:traceLower}.
\begin{lemma}
    \label{l:bounded}
    Let $\mc{P}\subset \mathbb{R}^2$ be a curvilinear polygon with maximal angle $\alpha$. Then for all $\max(-\frac{\pi}{\alpha}+\frac{1}{2},-\frac{1}{2})< s<\min(\frac{\pi}{\alpha}+\frac{1}{2},\frac{3}{2})$, $\Lambda^{\mc{P}}:H^s(\partial\mc{P})\to H^{s-1}(\partial\mc{P})$. 
\end{lemma}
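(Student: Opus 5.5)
We need to prove that $\Lambda^{\mc{P}}:H^s(\partial\mc{P})\to H^{s-1}(\partial\mc{P})$ for $s$ in the stated range. The plan is to prove the endpoint-type statement for $s\in[\tfrac12,\min(\tfrac{\pi}{\alpha}+\tfrac12,\tfrac32))$ directly, and then obtain the negative range by self-adjointness and duality.

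\textbf{Base case $s=\tfrac12$.} By the variational definition, $\langle\Lambda^{\mc{P}}f,g\rangle=\int_{\mc{P}}\nabla\mc{H}f\cdot\nabla\mc{H}g$. The harmonic extension is bounded $H^{1/2}(\partial\mc{P})\to H^1(\mc{P})$ on Lipschitz domains, so $\Lambda^{\mc{P}}:H^{1/2}\to H^{-1/2}$. Since $\Lambda^{\mc{P}}$ is symmetric, duality gives boundedness $H^{1-s}\to H^{-s}$ whenever it is bounded $H^s\to H^{s-1}$. This maps the range $s\in(\tfrac12,\min(\tfrac{\pi}{\alpha}+\tfrac12,\tfrac32))$ onto $s\in(\max(-\tfrac{\pi}{\alpha}+\tfrac12,-\tfrac12),\tfrac12)$. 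Interpolation then fills in everything between, so it suffices to treat $s$ slightly below $\min(\tfrac{\pi}{\alpha}+\tfrac12,\tfrac32)$.

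\textbf{Localization.} Using a partition of unity on $\partial\mc{P}$, write $f=\sum\chi_kf$. Away from corners the boundary is smooth. There $\Lambda$ is, modulo smoothing operators, a classical first-order pseudodifferential operator (principal symbol $|\xi|$), so $\chi\Lambda\chi$ maps $H^s\to H^{s-1}$ for every $s$. The off-diagonal pieces $\chi_j\Lambda\chi_k$ with disjoint supports are smoothing, because the harmonic extension of a function supported away from a boundary point is smooth up to the boundary there; near a corner this uses the known corner regularity.

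\textbf{Near a corner of angle $\beta\le\alpha$.} Let $f=\chi f\in H^s$ be supported near the corner. Extend $f$ to $F\in H^{s+1/2}(\mc{P})$ with $\|F\|\lesssim\|f\|_{H^s}$; this is possible since $s<\tfrac32$ and the trace is surjective on each edge. For $s>\tfrac12$ one needs compatibility at the vertex, which is automatic because $f$ is continuous there. Then solve $-\Delta w=\Delta F$ with $w|_{\partial\mc{P}}=0$ and $\Delta F\in H^{s-3/2}$.

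The key input is the Kondrat'ev/Grisvard regularity for the Dirichlet Laplacian in a corner of angle $\beta$. The operator $\Delta_D^{-1}:H^{r-1}\to H^{r+1}\cap H^1_0$ has no singular functions $r^{\pi/\beta}\sin(\pi\theta/\beta)$ obstructing it as long as $r+1<1+\tfrac{\pi}{\beta}$. In our case $r+1=s+\tfrac12<\tfrac{\pi}{\alpha}+1\le\tfrac{\pi}{\beta}+1$, so $w\in H^{s+1/2}$. Hence $u=F+w\in H^{s+1/2}(\mc{P})$, and the normal derivative on each edge lies in $H^{s-1}$ of that edge.

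\textbf{Main obstacle.} The hardest step is gluing the two edge pieces into a global $H^{s-1}(\partial\mc{P})$ function when $s-1\in(0,\tfrac12)$. For $s-1<\tfrac12$, $H^{s-1}$ functions on the two half-lines can be joined by zero extension without any compatibility condition, since the indicator of a half-line is a multiplier on $H^{\sigma}$ for $|\sigma|<\tfrac12$. This is exactly why $s<\tfrac32$ is needed.

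One must also verify that the fractional-order corner regularity, stated in the $H^{r}$ scale with $r$ non-integer, holds. I would obtain it by interpolating Grisvard's integer-order results, or cite the weighted estimates of \cite{BaNiZi:05} in the spaces of Lemma~\ref{l:rough}: $\||s|^{m-t}\partial^m_s\cdot\|$ bounds with $t<\tfrac{\pi}{\alpha}+\tfrac12$ embed into $H^t$, as in the proof of Lemma~\ref{l:highDomain}.
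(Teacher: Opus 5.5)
Your proposal is correct and is essentially the paper's proof: extend $f$ to $H^{s+\frac12}(\mc{P})$ via Grisvard's trace theorem (only continuity at the vertices is required since $s<\frac32$), solve the Dirichlet problem using fractional-order corner regularity valid for $s+\frac12<\frac{\pi}{\alpha}+1$ (the paper cites Dauge, Theorem 23.3), take the normal trace in $H^{s-1}(\partial\mc{P})$ using $s<\frac32$, and conclude by interpolation with the energy bound and duality. Two minor points: the localization is unnecessary because this argument is global (and $\chi_j\Lambda\chi_k$ with $\chi_j$ at a corner is not actually smoothing, since the normal derivative carries terms like $\rho^{\frac{\pi}{\beta}-1}$ with $\rho$ the distance to the vertex, though this is harmless because $s-1<\frac{\pi}{\alpha}-\frac12$), and for reentrant corners ($\alpha>\pi$) the fractional regularity cannot be obtained by interpolating integer-order results, since the $H^2$ endpoint fails there, so it must be cited directly as the paper does.
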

\begin{proof}
We prove the statement for $\frac{1}{2}\leq s<\min(\frac{\pi}{\alpha}+\frac{1}{2},\frac{3}{2})$ with the other indeces following by duality.

     Let $1\leq t<\frac{\pi}{\alpha}+1$.  We recall some elliptic regularity results from~\cite[Theorem 23.3]{Da:88a}.
     Then 
    for all $u\in H_0^1(\mc{P})$, $f\in H^{t-2}(\mc{P})$ such that
    \begin{equation} \label{e:Dirichlet}
    \langle \nabla u,\nabla v\rangle =\langle f,v\rangle\text{ for all }v\in H_0^1(\mc{P}),
    \end{equation}
    we have $u\in H^t(\mc{P})$ and 
    $$
    \|u\|_{H^t(\mc{P})}\leq C(\|f\|_{H^{t-2}(\mc{P})}+\|u\|_{H^1(\mc{P})}).
    $$
In particular, since~\eqref{e:Dirichlet} together with the Poincar\'e inequality implies
$$
\|u\|_{H^1(\mc{P})}^2\leq C\|\nabla u\|_{L^2(\mc{P})}^2=\langle f,u\rangle, 
$$
we have for all $u\in H_0^1(\mc{P})$ satisfying~\eqref{e:Dirichlet},
\begin{equation}
\label{e:DirichletFinal}
    \|u\|_{H^t(\mc{P})}\leq C\|f\|_{H^{t-2}(\mc{P})},
\end{equation}
which implies that for all $f\in H^{t-2}$ there is a unique solution $u\in H^t(\mc{P})\cap H_0^1(\mc{P})$ of~\eqref{e:Dirichlet}.

Let $1< s<\min(\frac{\pi}{\alpha}+\frac{1}{2},\frac{3}{2})$. Then, by~\cite[Theorem 1.5.2.8]{grisvard2011elliptic}, there is $E:H^{s}(\partial\mc{P})\to H^{s+\frac{1}{2}}(\mc{P})$ satisfying
    $$
    (Ef)|_{\partial\mc{P}}=f
    $$
    and $\|E\|_{H^{s-\frac{1}{2}}(\partial\mc{P})\to H^s(\mc{P})}<\infty$.
    
    Next, let $w\in H_0^1(\mc{P})$ satisfy
    $$
    \langle \nabla w,\nabla v\rangle =\langle \nabla Ef,\nabla v\rangle\text{ for all }v\in H_0^1(\mc{P}).
    $$
    Then, 
    $$
    \|\Lambda^{\mc{P}} f\|_{H^{s-1}(\partial\mc{P})}=\|\partial_\nu (Ef-w)\|_{H^{s-1}(\partial\mc{P})}\leq C(\|w\|_{H^{s+\frac{1}{2}}(\mc{P})}+\|Ef\|_{H^{s+\frac{1}{2}(\mc{P})}})\leq C\|Ef\|_{H^{s+\frac{1}{2}}(\mc{P})}\leq C\|f\|_{H^{s}(\partial\mc{P})}.
    $$
    In particular, 
    $$
    \|\Lambda^{\mc{P}}\|_{H^{s}(\partial\mc{P})\to H^{s-1}(
\partial\mc{P})}<\infty.
    $$
    The estimates for $\frac{1}{2}\leq s\leq 1$ follow by interpolation with the energy bound $\Lambda^{\mc{P}}:H^{\frac{1}{2}}(\partial\mc{P})\to H^{-\frac{1}{2}}(\partial\mc{P})$. 
\end{proof}

We will use the following analog of the Poincar\'e inequality
\begin{lemma}
       There is $C>0$ such that for all $u\in H^1(\mc{P})$, 
    \begin{equation} \label{e:poincare}
    \|u\|_{H^1(\mc{P})}\leq C(\|\nabla u\|_{L^2(\mc{P})}+\|u|_{\partial\mc{P}}\|_{L^2(\partial\mc{P})}).
    \end{equation}
\end{lemma}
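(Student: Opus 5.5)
We argue by contradiction, using compactness together with the Rellich--Kondrachov theorem. This is the standard route to Poincaré-type inequalities. Suppose \eqref{e:poincare} fails. Then there are $u_n\in H^1(\mc{P})$ with
$$
\|u_n\|_{H^1(\mc{P})}=1,\qquad \|\nabla u_n\|_{L^2(\mc{P})}+\|u_n|_{\partial\mc{P}}\|_{L^2(\partial\mc{P})}\le \tfrac1n .
$$
A curvilinear polygon is a bounded Lipschitz domain, so the embedding $H^1(\mc{P})\hookrightarrow L^2(\mc{P})$ is compact. After passing to a subsequence, $u_n\to u$ strongly in $L^2(\mc{P})$ and $u_n\rightharpoonup u$ weakly in $H^1(\mc{P})$.

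Since $\nabla u_n\to 0$ in $L^2$, weak lower semicontinuity gives $\nabla u=0$, so $u$ is constant on each connected component. We take $\mc{P}$ connected, as holds for $\Omega_\alpha$ and for $\Omega_\alpha\setminus H$. Moreover,
$$
\|u_n-u\|_{H^1}^2=\|u_n-u\|_{L^2}^2+\|\nabla u_n\|_{L^2}^2\to 0 ,
$$
so the convergence is in fact strong in $H^1$, and therefore $\|u\|_{H^1}=1$.

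Next we pass to the limit in the boundary term. The trace map $H^1(\mc{P})\to L^2(\partial\mc{P})$ is bounded on Lipschitz domains (indeed it maps into $H^{1/2}$). Hence $u_n|_{\partial\mc{P}}\to u|_{\partial\mc{P}}$ in $L^2(\partial\mc{P})$, and the limit must be $0$ because $\|u_n|_{\partial\mc{P}}\|_{L^2}\le 1/n$. But $u\equiv c$ is constant, so
$$
0=\|u|_{\partial\mc{P}}\|_{L^2(\partial\mc{P})}=|c|\,|\partial\mc{P}|^{1/2},
$$
which forces $c=0$. This contradicts $\|u\|_{H^1}=1$.

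The only step needing care is checking that the compact embedding and the trace theorem apply to curvilinear polygons and to the multiply connected domains $\Omega_\alpha\setminus H$. Both hold because these domains are bounded Lipschitz domains. If $\mc{P}$ had several components, the same argument applies componentwise, since each component has boundary of positive length.
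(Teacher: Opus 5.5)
Your proof is correct and follows essentially the same compactness-and-contradiction argument as the paper. The only cosmetic difference is that you use the compact embedding into $L^2(\mc{P})$, upgrade to strong $H^1$ convergence via $\nabla u_n\to 0$, and then apply the $H^1$ trace. The paper instead extracts a subsequence converging in $H^{3/4}(\mc{P})$, uses the trace $H^{3/4}(\mc{P})\to L^2(\partial\mc{P})$, and derives the contradiction from $u=0$ forcing $\|u_n\|_{H^1}\to 0$.
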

\begin{proof}
        We prove~\eqref{e:poincare} by contradiction. Suppose~\eqref{e:poincare} fails. Then there are $u_n\in H^1(\mc{P})$ with $\|u_n\|_{H^1(\mc{P})}=1$, $\|\nabla u_n\|_{L^2(\mc{P})}\to 0$ and $\|u_n|_{\partial\mc{P}}\|_{L^2(\partial\mc{P})}\to 0$. In particular, extracting a subsequence, we may assume that  $u_n\rightharpoonup u$ in $H^1(\mc{P})$ and $u_n\to u$ in $H^{\frac{3}{4}}(\mc{P})$. First, notice that since the trace map is bounded from $H^{3/4}(\mc{P})\to L^2(\partial\mc{P})$, we have $u|_{\partial\mc{P}}=0$. Next, since $\|\nabla u_n\|_{L^2(\mc{P})}\to 0$, we have $\nabla u=0$ so that $u$ is a constant.  Thus, since $u|_{\partial\mc{P})}=0$, $u=0$. But then $\|u_n\|_{L^2}\to 0$ and which is a contradiction since
    $$
    1=\|u_n\|^2_{H^1(\mc{P})}=\|\nabla u_n\|_{L^2(\mc{P})}^2+\|u_n\|_{L^2(\mc{P})}^2\to 0. 
    $$
\end{proof}

\begin{lemma}
    \label{l:invertible}
    Let $\mc{P}\subset \mathbb{R}^2$ be a curvilinear polygon with maximal angle $\alpha$. Then for all $\max(-\frac{\pi}{\alpha}+\frac{1}{2},-\frac{1}{2})< s<\min(\frac{\pi}{\alpha}+\frac{1}{2},\frac{3}{2})$, $(1+\Lambda)^{-1}:H^{s-1}(\partial\mc{P})\to H^s(\partial\mc{P})$
\end{lemma}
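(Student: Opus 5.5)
The plan is to prove invertibility of $1+\Lambda$ first at the energy level $s=\tfrac12$ by Lax--Milgram. Then I would bootstrap upward using the Dirichlet-problem regularity already used in Lemma~\ref{l:bounded}, and obtain the negative indices by duality.

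\textbf{Energy level.} The form $\langle (1+\Lambda^{\mc{P}})f,f\rangle=\|f\|_{L^2(\partial\mc{P})}^2+\|\nabla\mc{H}^{\mc{P}}f\|_{L^2(\mc{P})}^2$ is equivalent to $\|\mc{H}^{\mc{P}}f\|_{H^1(\mc{P})}^2$ by~\eqref{e:poincare}. The trace theorem together with the energy-minimizing property of the harmonic extension shows that this is in turn equivalent to $\|f\|_{H^{1/2}(\partial\mc{P})}^2$. Hence the form is coercive on $H^{1/2}$, and Lax--Milgram gives $(1+\Lambda)^{-1}:H^{-1/2}\to H^{1/2}$ as a bounded bijection.

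\textbf{Higher $s$.} Let $\tfrac12<s<\min(\tfrac{\pi}{\alpha}+\tfrac12,\tfrac32)$ and $g\in H^{s-1}\subset H^{-1/2}$. Set $f=(1+\Lambda)^{-1}g\in H^{1/2}$ and $u=\mc{H}^{\mc{P}}f\in H^1(\mc{P})$. Then $u$ solves the Neumann-type (Robin) problem
\[
-\Delta u=0 \text{ in } \mc{P},\qquad \partial_\nu u+u=g \text{ on } \partial\mc{P}.
\]
I would apply corner elliptic regularity for this mixed problem~\cite{Da:88a,grisvard2011elliptic}. The Robin term $u|_{\partial\mc{P}}$ is a lower-order perturbation: on the boundary it lies in $H^{1/2}$, better than the data in $H^{s-1}$. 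Neumann regularity for boundary data in $H^{s-1}$ then gives $u\in H^{s+1/2}(\mc{P})$, provided $s+\tfrac12$ stays below the first Neumann corner exponent $1+\tfrac{\pi}{\alpha}$; this is exactly the condition $s<\tfrac{\pi}{\alpha}+\tfrac12$. The bound $s<\tfrac32$ keeps us in the range where the trace estimate and the characterisation of $H^{s}(\partial\mc{P})$ on a Lipschitz curve are valid. Taking the trace gives $f\in H^s(\partial\mc{P})$ with $\|f\|_{H^s}\le C(\|g\|_{H^{s-1}}+\|f\|_{H^{1/2}})\le C\|g\|_{H^{s-1}}$.

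If one prefers to avoid citing Neumann corner regularity, there is an alternative. One could argue via the Dirichlet regularity~\eqref{e:DirichletFinal} combined with a parametrix: near each corner, pass to the conformal model in which the corner is straightened and $\Lambda$ becomes $|D|$ up to the weight $|z|^{\frac{\pi}{\alpha}-1}$. I expect the main obstacle to be this step, namely justifying the shift theorem for the Robin/Neumann problem with rough boundary data $H^{s-1}$ for $s-1<0$ at corners, and matching the admissible range exactly with $\min(\tfrac{\pi}{\alpha}+\tfrac12,\tfrac32)$.

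\textbf{Lower $s$.} For $\max(-\tfrac{\pi}{\alpha}+\tfrac12,-\tfrac12)<s<\tfrac12$, set $s'=1-s$, which lies in the range already treated. Since $1+\Lambda$ is self-adjoint, $(1+\Lambda)^{-1}:H^{s-1}\to H^s$ is the adjoint of $(1+\Lambda)^{-1}:H^{-s}=H^{s'-1}\to H^{s'}=H^{1-s}$. Therefore boundedness transfers, and injectivity follows from the energy-level result. Surjectivity onto $H^s$ follows from Lemma~\ref{l:bounded}, since $(1+\Lambda)$ maps $H^s\to H^{s-1}$ and inverts the map constructed above.
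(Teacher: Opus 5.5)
Your proposal is correct and follows essentially the same route as the paper. It uses coercivity at $s=\tfrac12$ from \eqref{e:poincare} and the trace theorem, then Neumann corner regularity for $\mc{H}^{\mc{P}}f$ with Neumann data $\Lambda^{\mc{P}}f=g-f\in H^{s-1}$ (absorbing the lower-order $H^{1/2}$ term), and duality for $s<\tfrac12$. The shift theorem you flag as the main obstacle is exactly what the paper takes from Dauge's Theorem 23.3, after moving the boundary data to an interior right-hand side via a normal-derivative lifting $E_\nu:H^{s-1}(\partial\mc{P})\to H^{s+\frac12}(\mc{P})$ from Grisvard, so your parametrix alternative is not needed.
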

\begin{proof}
We consider the $\frac{1}{2}\leq s<\min(\frac{\pi}{\alpha}+\frac{1}{2},\frac{3}{2})$. The other values of $s$ follow by duality.

\noindent{Step 1:} Invertibility in $H^{\frac{1}{2}}$. 

    First observe that for $g\in H^{1/2}(\partial\mc{P})$, 
    $$
    \langle (1+\Lambda^{\mc{P}}) g,g\rangle_{L^2(\partial\mc{P})}= \|\nabla \mc{H}^{\mc{P}}g\|_{L^2(\mc{P})}^2+\|g\|_{L^2(\partial\mc{P})}^2,
    $$
    where 
    $$
    -\Delta \mc{H}^{\mc{P}}g=0\text{ in }\mc{P},\qquad \mc{H}g|_{\partial\mc{P}}=g.
    $$
    In particular, by~\eqref{e:poincare} and the fact that the trace is bounded from $H^1(\mc{P})\to H^{\frac{1}{2}}(\partial\mc{P})$, 
    $$
    \|g\|_{H^{\frac{1}{2}}(\partial\mc{P})}^2\leq C\|\mc{H}^{\mc{P}}g\|_{H^1(\mc{P})}^2\leq C(\|\nabla \mc{H}^{\mc{P}}g\|_{L^2(\mc{P})}^2+\|g\|_{L^2(\mc{P})}^2)=\langle (1+\Lambda^{\mc{P}})g,g\rangle.
    $$
    Hence, 
    $$
    \|g\|_{H^{\frac{1}{2}}(\partial\mc{P})}\leq C\|(1+\Lambda^{\mc{P}})g\|_{H^{-\frac{1}{2}}(\partial\mc{P})}
    $$
    and, since $\Lambda^{\mc{P}}:H^{\frac{1}{2}}\to H^{-\frac{1}{2}}$ is self-adjoint, this implies $(1+\Lambda^{\mc{P}}):H^{\frac{1}{2}}(\partial\mc{P})\to H^{-\frac{1}{2}}(\partial\mc{P})$ is invertible. 

\noindent Step 2:  Elliptic regularity for the DtN map at higher regularity

Let $\frac{1}{2}\leq s< \frac{\pi}{\alpha}+\frac{1}{2}$. Then, by~\cite[Theorem 23.3]{Da:88a}, for all $w\in H^1(\mc{P})$ satisfying
\begin{equation} \label{e:neumann}
\langle \nabla w,\nabla v\rangle = \langle f, v\rangle\qquad \text{for all }v\in H^1(\mc{P}),
\end{equation}
with $f\in \dot H^{-1}\cap  H^{s-\frac{3}{2}}(\mc{P})$,
we have $w\in H^{s+\frac{1}{2}}(\mc{P})$ and 
    $$
    \|w\|_{H^{s+\frac{1}{2}}(\mc{P})}\leq C(\|f\|_{H^{s-\frac{3}{2}}(\mc{P})}+\|w\|_{H^1(\mc{P)}}).
    $$
     Consider now the space $\underline{H}^s(\mc{P}):=\{ u\in H^s(\mc{P})\,:\,\int u=0\}$. Then, the same arguments leading to~\eqref{e:DirichletFinal} but applying the Poincar\'e inequality in $\underline{H}^1$ shows that for all $f\in \dot {\underline{H}}^{-1}\cap H^{s-\frac{3}{2}}(\mc{P})$, there is $w\in \underline{H}^s(\mc{P})$ satisfying~\eqref{e:neumann} and
    $$
    \|w\|_{H^{s+\frac{1}{2}}(\mc{P})}\leq C\|f\|_{H^{s-\frac{3}{2}}(\mc{P})}.
    $$

    Now, let $1< s<\min(\frac{\pi}{\alpha}+\frac{1}{2},\frac{3}{2})$. Then, by~\cite[Theorem 1.5.2.8]{grisvard2011elliptic}, there is $E_\nu:H^{s-1}(\partial\mc{P})\to H^{s+\frac{1}{2}}(\mc{P})$ such that 
    $$
    (\partial_\nu E_\nu)|_{\partial\mc{P}} =I,\qquad E_\nu|_{\partial\mc{P}}=0
    $$

    Suppose that $g\in H^{\frac{1}{2}}(\partial\mc{P})$ with $\Lambda^{\mc{P}}g=:\tilde{f}\in H^{s-1}(\partial\mc{P})$. Note that 
    $$
     \langle \tilde{f},1\rangle_{\partial\mc{P}}=\langle \Lambda^{\mc{P}}g,1\rangle = \langle \nabla \mc{H}^{\mc{P}}g,\nabla 1\rangle =0.
    $$
    Therefore, there is $w\in H^1(\mc{P})$ such that 
    $$
    \langle \nabla w,\nabla v\rangle_{L^2(\mc{P})} =\langle \nabla E_\nu \tilde{f},\nabla v\rangle_{\mc{P}} -\langle \tilde{f},v\rangle_{\partial\mc{P}},\qquad \text{for all }v\in H^1(\mc{P}),
    $$
    and 
    $$
    \|w\|_{H^{s+\frac{1}{2}}(\mc{P})}\leq C(\|\nabla E_{\nu}\tilde{f}\|_{H^{s-\frac{1}{2}}(\mc{P})}+\|\tilde{f}\|_{H^{s-1}(\partial\mc{P})})\leq C\|\tilde{f}\|_{H^{s-1}(\partial\mc{P})}.
    $$

    Now, observe that $u:=\mc{H}^{\mc{P}}g+w$ satisfies
    $$
    \langle \nabla u,\nabla v\rangle = \langle \nabla E_\nu \tilde{f},\nabla v\rangle_{\mc{P}}\qquad \text{for all }v\in H^1(\mc{P}).
    $$
    Therefore, 
    \begin{align*}
    \|u\|_{H^{s+\frac{1}{2}}(\mc{P})}&\leq C(\|\nabla E_\nu \tilde{f}\|_{H^{s-\frac{1}{2}}}+\|u\|_{H^1(\mc{P})})\\
    &\leq C\|\tilde{f}\|_{H^{s-1}(\partial\mc{P})} +\|\mc{H}^{\mc{P}}g\|_{H^1(\mc{P})}\leq C(\|\tilde{f}\|_{H^{s-1}(\mc{P})} +\|g\|_{H^{\frac{1}{2}}(\partial\mc{P})}).
    \end{align*}
    Hence, 
    \begin{equation}\label{e:dtNElliptic}
    \|g\|_{H^{s}(\partial\mc{P})}\leq C\|\mc{H}^{\mc{P}}g\|_{H^{s+\frac{1}{2}}(\mc{P})}\leq C(\|u\|_{H^{s+\frac{1}{2}}(\partial\mc{P})}+\|w\|_{H^{s+\frac{1}{2}}(\partial\mc{P})})\leq C(\|\tilde{f}\|_{H^{s-1}(\partial\mc{P})}+\|g\|_{H^{\frac{1}{2}}(\partial\mc{P})}).
        \end{equation}

    Next, suppose that $(1+\Lambda^{\mc{P}})g=f$. Then, setting $\tilde{f}=\Lambda^{\mc{P}}g=f-g$ in~\eqref{e:dtNElliptic}, we obtain
    \begin{equation}
    \label{e:onePlusElliptic}
    \|g\|_{H^{s}(\partial\mc{P})}\leq C(\|f\|_{H^{s-1}(\partial\mc{P})}+\|g\|_{H^{s-1}(\partial\mc{P})}+\|g\|_{H^{\frac{1}{2}}(\partial \mc{P})})\leq C\|f\|_{H^{s-1}(\partial\mc{P})},
    \end{equation}
    where we have used that, since $s<\frac{3}{2}$, by Step 1
    $$
    \|g\|_{H^{s-1}(\partial\mc{P})}+\|g\|_{H^{\frac{1}{2}}(\partial\mc{P})}\leq \|(1+\Lambda^{\mc{P}})g\|_{H^{-\frac{1}{2}(\partial\mc{P})}}\leq \|f\|_{H^{-\frac{1}{2}}(\partial\mc{P})},
    $$
    to absorb the $H^{1/2}$ and $H^{s-1}$ errors.
\end{proof}
The following is now a direct corollary of interpolation together with Lemmas~\ref{l:bounded} and~\ref{l:invertible}
\begin{corollary}
    \label{c:domains}
    Let $\mc{P}\subset \mathbb{R}^2$ be a curvilinear polygon with maximal angle $\alpha$. Then for $m\in\mathbb{R}$ with  $0\leq m<\min(\frac{2\pi}{\alpha}+1,3)$, there is $C>0$ such that 
    $$
    \|(1+\Lambda^{\mc{P}})^m\|_{H^{\frac{m}{2}}\to H^{-\frac{m}{2}}}+\|(1+\Lambda^{\mc{P}})^{-m}\|_{ H^{-\frac{m}{2}}\to H^{\frac{m}{2}}}\leq C.
    $$
\end{corollary}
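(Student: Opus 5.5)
The approach is to obtain Corollary~\ref{c:domains} by interpolating between the integer-order mapping properties already proved, using that $A:=1+\Lambda^{\mc{P}}$ is a positive self-adjoint operator on $L^2(\partial\mc{P})$, so its real powers are defined by the functional calculus and form an analytic family $A^{z}$. The endpoint facts are as follows. Lemma~\ref{l:bounded} gives $A:H^s\to H^{s-1}$ for $s$ in the range $\big(\max(-\frac{\pi}{\alpha}+\frac12,-\frac12),\min(\frac{\pi}{\alpha}+\frac12,\frac32)\big)$. Lemma~\ref{l:invertible} gives $A^{-1}:H^{s-1}\to H^s$ in the same range. Finally, $A^{0}=I$ is trivially bounded on every $H^{s}$.

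\textbf{Step 1 (case $0\le m\le 1$).} First write $A^m=A^{m/2}A^{m/2}$. The plan is to show $A^{m/2}:H^{m/2}\to L^2$ and, by duality and self-adjointness, $A^{m/2}:L^2\to H^{-m/2}$. For the first map I would use the $L^2$-based characterisation: for $u$ in the form domain,
$$
\|A^{1/2}u\|_{L^2}^2=\langle Au,u\rangle\simeq \|u\|_{H^{1/2}}^2 .
$$
Here the upper bound comes from the energy identity and the trace theorem, and the lower bound from Step~1 of Lemma~\ref{l:invertible}. This shows $\mathrm{Dom}(A^{1/2})=H^{1/2}$ with equivalent norms. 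Complex interpolation between $\mathrm{Dom}(A^0)=L^2$ and $\mathrm{Dom}(A^{1/2})=H^{1/2}$ then gives $\mathrm{Dom}(A^{\theta/2})=[L^2,H^{1/2}]_\theta=H^{\theta/2}$ for $\theta\in[0,1]$, which handles $m\le1$. The inverse bounds $A^{-m}:H^{-m/2}\to H^{m/2}$ follow the same way.

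\textbf{Step 2 (case $1<m<\min(\frac{2\pi}{\alpha}+1,3)$).} Here I need $\mathrm{Dom}(A^{m/2})=H^{m/2}$ for $\frac12<\frac m2<\min(\frac{\pi}{\alpha}+\frac12,\frac32)$. For $s:=m/2\in(\frac12,\frac32)$ in the admissible range, Lemmas~\ref{l:bounded} and~\ref{l:invertible} show that $A$ is an isomorphism $H^{s}\to H^{s-1}$. Combined with Step~1 (taking $\mathrm{Dom}(A^{(s-1)/2})$-type identifications for $s-1\in(-\frac12,\frac12)$, extended to negative orders by duality), this gives $\mathrm{Dom}(A^{s/2+1/2})\cdot$ identification: $u\in H^{s}$ iff $Au\in H^{s-1}=\mathrm{Dom}(A^{(s-1)/2})$ iff $u\in \mathrm{Dom}(A^{(s+1)/2})$. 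Hence $\mathrm{Dom}(A^{\tau})=H^{\tau}$ for all $\tau$ in the allowed range up to $\min(\frac{\pi}{\alpha}+\frac12,\frac32)$, with equivalent norms. Taking $\tau=m/2$ yields $A^{m/2}:H^{m/2}\to L^2$ as an isomorphism, and duality gives $A^{m/2}:L^2\to H^{-m/2}$. Composing the two gives the bound on $A^m$, and composing their inverses gives the bound on $A^{-m}$.

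The main obstacle is the bootstrap in Step~2. It requires the negative-order identification $\mathrm{Dom}(A^{\tau})$ for $\tau<0$, meaning the completion of $L^2$ in $\|A^\tau\cdot\|$, to be $H^{\tau}$, which again follows by duality from Step~1. It also requires endpoint care, since all ranges are open. I would handle this by working at a fixed $m$ strictly inside the range and choosing interpolation endpoints strictly inside the ranges of Lemmas~\ref{l:bounded} and~\ref{l:invertible}, which is why the statement has a strict inequality.
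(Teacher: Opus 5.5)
Your argument is correct and is essentially the paper's, which proves the corollary in one line by citing interpolation together with Lemmas~\ref{l:bounded} and~\ref{l:invertible}. Your Steps~1 and~2 are the standard way to make that precise: identify the scale $X_\sigma$ of $A=1+\Lambda^{\mc{P}}$ (i.e.\ $X_\sigma=\mathrm{Dom}(A^\sigma)$ for $\sigma\ge0$ and its dual for $\sigma<0$) with $H^\sigma$, first for $|\sigma|\le\frac12$ via the form domain and interpolation, then for $\frac12<\sigma<\min(\frac{\pi}{\alpha}+\frac12,\frac32)$ via one application of the two lemmas. That second step works because the lemmas' maps are restrictions of the form isomorphism $A:H^{1/2}\to H^{-1/2}$, and so agree with the abstract $A:X_\sigma\to X_{\sigma-1}$.

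The one thing to fix is a factor-of-two slip in Step~2. With your Step~1 normalization $\mathrm{Dom}(A^{\sigma})=H^{\sigma}$ and $s=\frac m2$, the chain should read $u\in H^{s}\iff Au\in H^{s-1}=X_{s-1}\iff u\in X_s=\mathrm{Dom}(A^{s})$. As written, with $H^{s-1}=\mathrm{Dom}(A^{(s-1)/2})$ and $u\in\mathrm{Dom}(A^{(s+1)/2})$, it is inconsistent with your own Step~1 and with your correct final conclusion.
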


\bibliography{references}

@article{ArMa:12,
  author = {Arendt, Wolfgang and Mazzeo, Rafe},
  title = {Friedlander's eigenvalue inequalities and the {D}irichlet-to-{N}eumann semigroup},
  journal = {Commun. Pure Appl. Anal.},
  fjournal = {Communications on Pure and Applied Analysis},
  volume = {11},
  year = {2012},
  number = {6},
  pages = {2201--2212},
  issn = {1534-0392,1553-5258},
  mrclass = {35J05 (35P15)},
  mrnumber = {2912743},
  mrreviewer = {Davide\ Zucco},
  doi = {10.3934/cpaa.2012.11.2201},
  url = {https://doi.org/10.3934/cpaa.2012.11.2201},
}

@article{DeLaPa:26,
author={Deleporte, Alix and Lagac\'e-Taylor, Jean and Parnovski, Leonid},
title={Listening to boundary regularity in the {S}teklov problem:
Gaps, shifts, and eigenfunction decay},
year={2026},
journal={in preparation},
}

@article{HLu,
  author = {Hislop, P. D. and Lutzer, C. V.},
  coden = {INPEEY},
  doi = {10.1088/0266-5611/17/6/313},
  fjournal = {Inverse Problems. An International Journal on the Theory and Practice of Inverse Problems, Inverse Methods and Computerized Inversion of Data},
  issn = {0266-5611},
  journal = {Inverse Problems},
  mrclass = {35P20 (35R30 47F05)},
  mrnumber = {1872919},
  mrreviewer = {Georgi E. Karadzhov},
  number = {6},
  pages = {1717--1741},
  title = {Spectral asymptotics of the {D}irichlet-to-{N}eumann map on multiply connected domains in {$\Bbb R^d$}},
  url = {http://dx.doi.org/10.1088/0266-5611/17/6/313},
  volume = {17},
  year = {2001},
}

@article{GalkToth,
  author = {Galkowski, Jeffrey and Toth, John A.},
  title = {Pointwise bounds for {S}teklov eigenfunctions},
  journal = {J. Geom. Anal.},
  fjournal = {Journal of Geometric Analysis},
  volume = {29},
  year = {2019},
  number = {1},
  pages = {142--193},
  issn = {1050-6926},
  mrclass = {58J50 (35P20 35S05)},
  mrnumber = {3897008},
  mrreviewer = {Luigi Provenzano},
  doi = {10.1007/s12220-018-9984-7},
  url = {https://doi.org/10.1007/s12220-018-9984-7},
}

@article{GPPS,
  author = {Girouard, Alexandre and Parnovski, Leonid and Polterovich, Iosif and Sher, David A.},
  doi = {10.1017/S030500411400036X},
  fjournal = {Mathematical Proceedings of the Cambridge Philosophical Society},
  issn = {0305-0041},
  journal = {Math. Proc. Cambridge Philos. Soc.},
  mrclass = {58J53 (35J05 35P20 35R01 58J37 58J40 58J50)},
  mrnumber = {3286514},
  mrreviewer = {Jie Yang},
  number = {3},
  pages = {379--389},
  title = {The {S}teklov spectrum of surfaces: asymptotics and invariants},
  url = {http://dx.doi.org/10.1017/S030500411400036X},
  volume = {157},
  year = {2014},
}

@article{LPPS2022,
  title = {Sloshing, Steklov and corners: Asymptotics of Steklov eigenvalues for curvilinear polygons},
  author = {Levitin, Michael and Parnovski, Leonid and Polterovich, Iosif and Sher, David A},
  journal = {Proceedings of the London Mathematical Society},
  volume = {125},
  number = {3},
  pages = {359--487},
  year = {2022},
  publisher = {Wiley Online Library},
}

@article{GaMaWa:26,
  title = {Scattering for the {D}irichlet to {N}eumann map and the {S}teklov spectrum},
  author = {Galkowski, Jeffrey and Malagutti, Marcello and Wang, Ruoyu},
  year = {2026},
  journal = {preprint},
}

@book{Da:88a,
  title = {Elliptic boundary value problems on corner domains},
  author = {Dauge, M.},
  series = {Lecture Notes in Mathematics},
  number = {1341},
  publisher = {Springer-Verlag},
  year = {1988},
}

@book{grisvard2011elliptic,
  title = {Elliptic problems in nonsmooth domains},
  author = {Grisvard, Pierre},
  year = {2011},
  publisher = {SIAM},
}

@article{Ro:23,
  title = {Weyl asymptotics for Poincar{\'e}--Steklov eigenvalues in a domain with Lipschitz boundary},
  author = {Rozenblum, Grigori},
  journal = {Journal of Spectral Theory},
  volume = {13},
  number = {3},
  pages = {755},
  year = {2023},
  publisher = {European Mathematical Society (EMS)},
}

@article{KaLaPo:23,
  title = {Weyl’s Law for the Steklov Problem on Surfaces with Rough Boundary: M. Karpukhin et al.},
  author = {Karpukhin, Mikhail and Lagac{\'e}, Jean and Polterovich, Iosif},
  journal = {Archive for Rational Mechanics and Analysis},
  volume = {247},
  number = {5},
  pages = {77},
  year = {2023},
  publisher = {Springer},
}

@book{ZwScat,
  author = {S. Dyatlov and M. Zworski},
  title = {Mathematical theory of scattering resonances},
  series = {Graduate Studies in Mathematics},
  volume = {200},
  publisher = {American Mathematical Society, Providence, RI},
  year = {2019},
}

@article{BaNiZi:05,
  author = {Constantin Bacuta and Victor Nistor and Ludmil T. Zikatanov},
  title = {Improving the Rate of Convergence of High-Order Finite Elements on Polyhedra I: A Priori Estimates},
  journal = {Numerical Functional Analysis and Optimization},
  volume = {26},
  number = {6},
  pages = {613--639},
  year = {2005},
  publisher = {Taylor \& Francis},
  doi = {10.1080/01630560500377295},
  url = {  https://doi.org/10.1080/01630560500377295 },
  eprint = {  https://doi.org/10.1080/01630560500377295 },
}

@article{abatangelo2019spectral,
  title = {Spectral stability under removal of small capacity sets and applications to Aharonov-Bohm operators},
  author = {Abatangelo, Laura and Felli, Veronica and Hillairet, Luc and L{\'e}na, Corentin and others},
  journal = {J. Spectr. Theory},
  volume = {9},
  number = {2},
  pages = {379--427},
  year = {2019},
}

@book{brezis2011functional,
  title = {Functional analysis, Sobolev spaces and partial differential equations},
  author = {Brezis, Haim and Br{\'e}zis, Haim},
  volume = {2},
  number = {3},
  year = {2011},
  publisher = {Springer},
}

@article{davies2013inverse,
  title = {An inverse spectral theorem},
  author = {Davies, Edward Brian},
  journal = {Journal of Operator Theory},
  pages = {195--208},
  year = {2013},
  publisher = {JSTOR},
}

@inproceedings{smith2016heat,
  title = {Heat traces and existence of scattering resonances for bounded potentials},
  author = {Smith, Hart F and Zworski, Maciej},
  booktitle = {Annales de l'Institut Fourier},
  volume = {66},
  number = {2},
  pages = {455--475},
  year = {2016},
}

@inproceedings{christiansen2017sharp,
  title = {A sharp lower bound for a resonance-counting function in even dimensions},
  author = {Christiansen, TJ},
  booktitle = {Annales de l'Institut Fourier},
  volume = {67},
  number = {2},
  pages = {579--604},
  year = {2017},
}

@incollection{molchanov2009general,
  title = {On general Cwikel--Lieb--Rozenblum and Lieb--Thirring inequalities},
  author = {Molchanov, Stanislav and Vainberg, Boris},
  booktitle = {Around the Research of Vladimir Maz'ya III: Analysis and Applications},
  pages = {201--246},
  year = {2009},
  publisher = {Springer},
}
 \bibliographystyle{alpha}
\end{document}